\theoremstyle{plain}
\newtheorem{thm}{Theorem}[section]
\newtheorem*{thm*}{Theorem}
\newtheorem*{lem*}{Lemma}
\newtheorem{cor}[thm]{Corollary}
\newtheorem{prop}[thm]{Proposition}
\newtheorem{lem}[thm]{Lemma}
\newtheorem{quest}[thm]{Question}
\theoremstyle{definition}
\newtheorem{defn}[thm]{Definition}
\newtheorem{exmp}[thm]{Example}
\theoremstyle{remark}
\newtheorem{rem}[thm]{Remark}
\DeclareMathOperator{\linspan}{span}
\DeclareMathOperator{\supp}{supp}
\newcommand{\tn}{|\mspace{-1mu}|\mspace{-1mu}|}
\begin{document}
\title{Almost square Banach spaces}

\author[T.~A.~Abrahamsen]{Trond~A.~Abrahamsen}
\address{Department of Mathematics, Agder University, Servicebox 422,
4604 Kristiansand, Norway.}
\email{Trond.A.Abrahamsen@uia.no}

\author[J.~Langemets]{Johann Langemets}
\address{Institute of Mathematics, University of Tartu, J. Liivi 2,
50409 Tartu, Estonia}
\email{johann.langemets@ut.ee}
\thanks{The research of J. Langemets was supported by
Estonian Science Foundation Grant 8976,  Estonian Targeted
Financing Project SF0180039s08, and Estonian Institutional
Research Project IUT20-57.}

\author[V.~Lima]{Vegard Lima}
\address{{\AA}lesund University College, Postboks 1517, 6025
  {\AA}lesund, Norway.}
\email{Vegard.Lima@gmail.com}

\date{\today}
\subjclass[2010]{Primary 46B20; Secondary 46B04, 46B07}
\keywords{diameter two, octahedral, M-ideal, renorming,
intersection property, almost square, absolute sums}

\begin{abstract}
  We single out and study a natural class of Banach spaces
  -- almost square Banach spaces.
  In an almost square space we can find, given
  a finite set $x_1,x_2,\ldots,x_N$ in the unit sphere,
  a unit vector $y$ such that $\|x_i-y\|$ is almost one.
  These spaces have duals that
  are octahedral and finite convex combinations of slices
  of the unit ball of an almost square space have diameter 2.
  We provide several examples and
  characterizations of almost square spaces.
  We prove that non-reflexive spaces which are
  M-ideals in their biduals are almost square.

  We show that every separable space containing a copy of $c_0$
  can be renormed to be almost square.
  A local and a weak version of almost square spaces
  are also studied.
\end{abstract}

\maketitle

\section{Introduction}
Let $X$ be a Banach space with unit ball $B_X$,
unit sphere $S_X$, and dual space $X^*$.

\begin{defn}
  We will say that a Banach space $X$ is
  \begin{enumerate}
  \item
    \emph{locally almost square} (LASQ) if for every $x \in S_X$
    there exists a sequence $(y_n) \subset B_X$ such that $\|x \pm y_n\| \to 1$
    and $\|y_n\| \to 1$.
  \item
    \emph{weakly almost square} (WASQ) if for
    every $x \in S_X$ there exists a sequence $(y_n) \subset B_X$ such that $\|x
    \pm y_n\| \to 1$, $\|y_n\| \to 1$ and $y_n \to 0$ weakly.
  \item
    \emph{almost square} (ASQ) if for every finite subset
    $(x_i)_{i=1}^N \subset S_X$ there exists a sequence $(y_n) \subset B_X$ such
    that $\|x_i \pm y_n\| \to 1$ for every $i=1,2,\ldots,N$
    and $\|y_n\| \to 1$.
  \end{enumerate}
\end{defn}
Obviously WASQ implies LASQ, but
it is not completely obvious that ASQ implies WASQ.
This will be shown in Theorem~\ref{thm-asiswas}.
In the language of
Sch\"affer \cite{MR0467256}*{p.~31}
a Banach space $X$ is LASQ if and only if no $x \in S_X$
is \emph{uniformly non-square}
(see also \cite[Proposition~2.2]{Kub13}).

The above definition was inspired by
the following characterizations of octahedral norms
shown by Haller, Langemets, and P{\~o}ldvere in \cite{HLP}
(see Proposition~2.1, Proposition~2.4, and Lemma~3.1).
\begin{prop}\label{defn:oct}
  A Banach space $X$ is said to be
  \begin{enumerate}
  \item \emph{locally octahedral} if for every $x \in S_X$ and every
    $\varepsilon > 0$ there exists $y \in S_X$ such that $\| x \pm y\|
    \ge 2-\varepsilon$.
  \item \emph{weakly octahedral} if for every finite subset
    $(x_i)_{i=1}^N \subset S_X$, every $x^*\in B_{X^*}$, and every
    $\varepsilon>0$ there exists $y \in S_X$ such that $\|x_i + ty\| \ge
    (1-\varepsilon)(|x^*(x_i)| + t)$ for all $i=1,2,\ldots,N$ and
    $t>0$.
  \item \emph{octahedral} if for every finite subset
    $(x_i)_{i=1}^N \subset S_X$ and every $\varepsilon>0$ there exists $y \in
    S_X$ such that $\|x_i \pm y\| \ge 2-\varepsilon$ for all
    $i=1,2,\ldots,N$.
  \end{enumerate}
\end{prop}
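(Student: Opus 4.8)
The content labelled Proposition~\ref{defn:oct} is, despite its environment, a triple of definitions: it simply names the three metric properties (i)--(iii) as local octahedrality, weak octahedrality, and octahedrality. No inequality, identity, or equivalence is asserted \emph{inside} the statement, so there is no implication to derive; the three items only fix terminology used in the rest of the paper. Accordingly, the plan is not to prove a new fact but to account for the word ``characterizations'' in the sentence that introduces the statement, since that is the only place where a genuine assertion could be hiding.

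What that word signals is that each of (i)--(iii) coincides with a condition already shown, in the work of Haller, Langemets, and P{\~o}ldvere, to be equivalent to the corresponding classical (dual) notion of octahedrality. The plan is therefore to record the provenance rather than to argue: conditions (i)--(iii) are precisely the conditions appearing in the three cited results, namely \cite{HLP}*{Proposition~2.1}, \cite{HLP}*{Proposition~2.4}, and \cite{HLP}*{Lemma~3.1}. Since those references already carry out the actual equivalences with the standard definitions, the only task here is to reproduce the conditions faithfully and to point to their source; nothing beyond the citation is needed.

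The single point requiring care — and the nearest thing to an obstacle — is bookkeeping in item (ii): one must check that the quantifier order and the roles of the auxiliary functional $x^*\in B_{X^*}$ and the scalar $t>0$ match exactly the weakly octahedral condition of \cite{HLP}, since weak octahedrality is the subtlest of the three and the one most easily misquoted. Once each item is matched to its source in this way (the transcriptions of (i) and (iii) being evidently correct), the statement is justified purely as a definition supported by the cited characterizations, and no independent argument is called for.
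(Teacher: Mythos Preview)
Your reading is correct: the paper offers no proof of this proposition either, treating it purely as a definition imported from \cite{HLP} with the parenthetical pointer ``(see Proposition~2.1, Proposition~2.4, and Lemma~3.1)'' doing all the work. Your account matches the paper's approach exactly.
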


Our main interest in the properties LASQ, WASQ, and ASQ come from
their relation to diameter two properties.
Let $X$ be a Banach space.
Recall that a \emph{slice} of $B_X$ is a set of the form
\begin{equation*}
  S(x^*,\alpha) = \{
  x \in B_X : x^*(x) > 1 - \alpha
  \},
\end{equation*}
where $x^* \in S_{X^*}$ and $\alpha > 0$.
According to \cite{ALN01} $X$ has the
\emph{local diameter $2$ property (LD2P)} if every slice of
$B_X$ has diameter $2$,
$X$ has the \emph{diameter $2$ property (D2P)} if every nonempty
relatively weakly open subset of $B_X$ has diameter $2$,
and $X$ has the \emph{strong diameter $2$ property (SD2P)}
if every finite convex combination of slices of $B_X$ has diameter $2$.
The following theorem is shown in
\cite{HLP} (see Theorems~2.3, 2.7, and 3.3).
\begin{thm}
  Let $X$ be a Banach space. Then
  \begin{enumerate}
  \item $X$ has the LD2P
    if and only if $X^*$ is locally octahedral.
  \item $X$ has the D2P
    if and only if $X^*$ is weakly octahedral.
  \item $X$ has the SD2P
    if and only if $X^*$ is octahedral.
  \end{enumerate}
\end{thm}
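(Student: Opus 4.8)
The plan is to prove all three equivalences from a single duality template: a slice (respectively, a nonempty relatively weakly open set, respectively, a finite convex combination of slices) of $B_X$ has diameter close to $2$ precisely when it contains two points $u,v$ admitting a functional $y^*\in S_{X^*}$ with $y^*(u-v)$ close to $2$; feeding such a $y^*$ back against the functionals defining the set produces the relevant octahedrality estimate on $X^*$, and conversely an octahedrality witness $y^*$ is converted into two far-apart points of the set.

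\emph{Parts (i) and (iii) --- the easy ends.} For (i), given $x^*\in S_{X^*}$ and $\varepsilon>0$, the LD2P yields $x,y\in S(x^*,\varepsilon/2)$ with $\|x-y\|>2-\varepsilon/2$; picking $z^*\in S_{X^*}$ with $z^*(x-y)>2-\varepsilon/2$ gives $z^*(x)>1-\varepsilon/2$ and $z^*(y)<-1+\varepsilon/2$, so, since $x^*(x),x^*(y)>1-\varepsilon/2$, we get $\|x^*\pm z^*\|>2-\varepsilon$ and $y^*:=z^*$ works. Conversely, from $\|x^*\pm y^*\|>2-\varepsilon$ with $\varepsilon<\alpha$ choose $u,v\in B_X$ with $(x^*+y^*)(u)$ and $(x^*-y^*)(v)$ both exceeding $2-\varepsilon$; then $u,v\in S(x^*,\alpha)$ and $\|u-v\|\ge y^*(u-v)>2-2\varepsilon$. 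Part (iii) is the same computation run coordinatewise through $\frac{1}{N}\sum_{i=1}^N S(x_i^*,\alpha)$ in one direction and an arbitrary $\sum_i\lambda_i S(x_i^*,\alpha_i)$ in the other; the only new point is that if $\frac{1}{N}\sum_i y^*(x_i-x_i')>2-\beta$ with each summand at most $2$, then each summand exceeds $2-N\beta$, which localizes the estimate to every index $i$.

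\emph{Part (ii) --- the substantial case.} For the forward implication, use Goldstine to choose $x_0\in B_X$ with $|x_i^*(x_0)-x^{**}(x_i^*)|$ small for all $i$; apply the D2P to the basic weak neighbourhood $W=\{x\in B_X:|x_i^*(x-x_0)|<\delta\}$ to obtain $u,v\in W$ with $\|u-v\|$ close to $2$; then $z:=(u-v)/2$ has $\|z\|$ close to $1$ and $|x_i^*(z)|<\delta$, so any $y^*\in S_{X^*}$ with $y^*(z)$ close to $1$ forces $y^*(u)$ close to $1$ and $y^*(v)$ close to $-1$ (both being trapped in $[-1,1]$); testing $x_i^*+ty^*$ against $u$ or against $v$ according to the sign of $x^{**}(x_i^*)$, and using the trivial bound $\|x_i^*+ty^*\|\ge 1-t$ to handle small $t$, yields $\|x_i^*+ty^*\|\ge(1-\varepsilon)(|x^{**}(x_i^*)|+t)$ for all $t>0$ once $\delta$ is chosen small enough. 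For the converse, let $W$ be a nonempty relatively weakly open subset of $B_X$, $x_0\in W$, with $W\supseteq\{x\in B_X:|x_i^*(x-x_0)|<r\}$ for some $x_1^*,\dots,x_N^*\in S_{X^*}$; put $E^*=\linspan\{x_1^*,\dots,x_N^*\}$, take a fine finite net of the (finite-dimensional) sphere $S_{E^*}$, apply weak octahedrality of $X^*$ to this net with the bidual functional $\widehat{x_0}$, and by compactness of $S_{E^*}$ and homogeneity in $f$ upgrade the resulting inequality to $\|f+ty^*\|\ge(1-\eta)(|f(x_0)|+t)$ for all $f\in E^*$ and $t>0$. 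This in particular forces $y^*\notin E^*$, so Helly's theorem (the scalar intersection-of-balls principle) applies and produces $\widetilde u,\widetilde v\in(1+\eta)B_X$ with $x_i^*(\widetilde u)=x_i^*(\widetilde v)=x_i^*(x_0)$ for all $i$, $y^*(\widetilde u)=1$ and $y^*(\widetilde v)=-1$; its admissibility condition is exactly $|f(x_0)+b|\le M\|f+by^*\|$ for all $f\in E^*$ and all real $b$, with $M$ close to $1$, which is the displayed estimate taken at $t=|b|$. Rescaling $\widetilde u,\widetilde v$ into $B_X$, the two points lie in $W$ for small parameters, and the norm of their difference is at least the $y^*$-value of that difference, hence close to $2$.

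\emph{The main obstacle} is the backward direction of (ii). The naive attempt --- feed weak octahedrality the single family $\{x_i^*\}$ and then try to realize one vector simultaneously witnessing proximity to $x_0$ in all directions $x_i^*$ and having $y^*$-value near $1$ --- fails, because weak octahedrality controls $\|x_i^*+ty^*\|$ only one functional at a time, not $\|\sum_i a_ix_i^*+by^*\|$, whereas the admissibility inequality for Helly's theorem ranges over all such combinations. This is precisely what forces one to apply the hypothesis to a whole net of $S_{E^*}$ and homogenize before invoking Helly. One must also keep an eye on the small-$t$ regime throughout --- there the estimates are carried by $\|x_i^*\|=1$ rather than by the witness --- and check, as noted, that the net argument pushes $y^*$ off $E^*$, so that Helly's condition is not vacuously violated.
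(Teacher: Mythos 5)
This theorem is not proved in the paper at all: it is quoted from Haller, Langemets, and P{\~o}ldvere \cite{HLP} (Theorems~2.3, 2.7, and 3.3), so there is no internal proof to compare yours against; I can only judge the proposal on its own merits, and it is correct. Parts (i) and (iii) are the standard duality computations: two far-apart points of a slice (or of a convex combination of slices, where your localization step --- an average of numbers at most $2$ exceeding $2-\beta$ forces each term to exceed $2-N\beta$ --- is exactly right) paired with a norming functional yield the octahedrality estimate in $X^*$, and conversely almost-norming vectors for $x^*\pm y^*$ automatically land in the relevant slices and are separated by $y^*$. Part (ii) carries the real content, and your outline addresses the two genuinely delicate points: in the forward direction the additive errors from Goldstine and from the weak neighbourhood can only be absorbed into the multiplicative factor $(1-\varepsilon)$ because the small-$t$ regime is covered by the trivial bound $\|x_i^*+ty^*\|\ge 1-t$; in the backward direction weak octahedrality controls one functional at a time, so applying it to a finite net of $S_{E^*}$ with $x^{**}=\widehat{x_0}$ and homogenizing (again using the trivial estimate to kill the net error when $t$ is small) gives $\|f+ty^*\|\ge(1-\eta)(|f(x_0)|+t)$ for all $f\in E^*$, which is precisely the Helly admissibility condition $|f(x_0)+b|\le(1-\eta)^{-1}\|f+by^*\|$ (the $b=0$ case being just $\|x_0\|\le 1$); rescaling the resulting $\widetilde u,\widetilde v$ into $B_X$ keeps them in the given weakly open set and their distance is at least $2/(1+\eta)$, as needed. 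The only thing left implicit is the quantitative bookkeeping --- how small the Goldstine error, $\delta$, and the net mesh must be relative to $\varepsilon$ --- but it goes through exactly as you indicate, and your argument is in the same spirit as the proofs in the cited source.
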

The connection between the SD2P and octahedrality
has also been studied in \cite{BGLPRZ-Oct}.

The starting point of this paper was the observation
by Kubiak that if $X$ is LASQ then $X$ has the LD2P
and similarly if $X$ is WASQ then $X$ has the D2P
(see Propositions~2.5 and ~2.6 in \cite{Kub13}).
The basic idea from Kubiak's proof can be used
with a result from \cite{ALN01} to show that ASQ spaces have the SD2P,
but we will give a shorter self-contained
proof that $X^*$ is octahedral whenever $X$ is ASQ
in Proposition~\ref{prop:asq-dual-oct}.

It is known that the three diameter $2$ properties
are different (see \cite{BGLPRZ},
\cite{ABGLP13}, and \cite{HL}).
A natural question is whether
LASQ, WASQ, and ASQ are different properties.
A consequence of Example~\ref{exmp:wasqNEQasq}
is that $L_1[0,1]$ is a WASQ space which is not ASQ.

We now give a short outline of the paper.
Section~\ref{sec:characterizations} starts with a few characterizations
of LASQ and ASQ. In Lemma~\ref{lem:epsiso2} we show that
ASQ spaces have to contain almost isometric copies of $c_0$.
This in turn is used to prove Theorem~\ref{thm-asiswas},
which shows that ASQ implies WASQ.
The final result in this section
is Theorem~\ref{thm:c0-asq}, where
we show that every separable Banach space that contains a
copy of $c_0$ can be equivalently renormed to be ASQ.

In Section~\ref{sec:examples1} we will give examples of spaces which
are LASQ, WASQ, and ASQ.

In Section~\ref{sec:examples2} we show that
non-reflexive spaces which are M-ideals in their biduals are ASQ
(see Theorem~\ref{thm:mid-sc-sq}). However, the
class of ASQ spaces is much bigger than
the class of spaces that are M-ideals in their
biduals (see the discussion following
Corollary~\ref{cor:mem-asq} and Example~\ref{exmp:asqNotMid}).

In Section~\ref{sec:stability} we study the stability
of both (local/weak) octahedrality and (L/W)ASQ
when forming absolute sums of Banach spaces.
We show that local and weak octahedral,
LASQ, and WASQ spaces have nice stability properties
(see Propositions~\ref{prop:stab-was}, \ref{prop: stab loct and lasq},
and \ref{prop:stab-oct}) but the
situation is different for ASQ.
For $1 \le p < \infty$ the $\ell_p$-sum of
two Banach spaces is never ASQ.
By Proposition~3.10 in \cite{HLP}
an $\ell_p$-sum of two Banach spaces can only
be octahedral if $p=1$ or $p=\infty$.

In Section~\ref{sec:connection-with-ip}
we connect ASQ with the intersection property
of Behrends and Harmand. We show that
ASQ spaces fail the intersection property
and give a quantitative version of this fact
in Proposition~\ref{thm:epsfailsIPandas}.
We also give an example of a space that fails
the intersection property and is not LASQ.

We follow standard Banach space notation as used
in e.g. \cite{AlKa}. We consider real Banach spaces only.

\section{Characterizations}
\label{sec:characterizations}
The following characterization of LASQ and ASQ
is clear from the definition.
Note that since we have finitely many vectors to play
with in the definition of ASQ we may drop the plus-minus sign.

\begin{prop}\label{prop:lasNas-1dim}
  Let $X$ be a Banach space.

  $X$ is LASQ if and only if for every $x \in S_X$
  and every $\varepsilon > 0$ there exists $y \in S_X$
  such that $\|x \pm y\| \le 1 + \varepsilon$.

  $X$ is ASQ if and only if for every finite subset
  $(x_i)_{i=1}^N \subset S_X$
  and every $\varepsilon > 0$ there exists $y \in S_X$
  such that $\|x_i - y\| \le 1 + \varepsilon$.
\end{prop}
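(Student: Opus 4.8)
The plan is to prove each of the two ``if and only if'' statements by passing between the sequential formulation in the definition and the $\varepsilon$-formulation, which in both directions is an elementary normalization argument; the one point that requires an idea is the elimination of the $\pm$ sign in the ASQ characterization, and this is handled by enlarging the given finite set to include the negatives of its members.

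For the LASQ equivalence I would argue as follows. Assume first that $X$ is LASQ, fix $x\in S_X$ and $\varepsilon>0$, and choose a sequence $(y_n)\subset B_X$ with $\|x\pm y_n\|\to 1$ and $\|y_n\|\to 1$. For $n$ large enough $y_n\neq 0$, and the vector $y=y_n/\|y_n\|\in S_X$ satisfies $\|x\pm y\|\le\|x\pm y_n\|+\|y_n-y\|=\|x\pm y_n\|+\bigl|\,\|y_n\|-1\,\bigr|$, which is at most $1+\varepsilon$ once $n$ is large. Conversely, given the $\varepsilon$-condition, fix $x\in S_X$, apply it with $\varepsilon=1/n$ to obtain $y_n\in S_X$ with $\|x\pm y_n\|\le 1+1/n$, and note that $2=\|2x\|\le\|x+y_n\|+\|x-y_n\|$ forces $\|x\pm y_n\|\ge 1-1/n$ as well; hence $\|x\pm y_n\|\to 1$ and $\|y_n\|=1\to1$, so $X$ is LASQ.

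The ASQ equivalence is proved in the same way. If $X$ is ASQ, then for a finite set $(x_i)_{i=1}^N\subset S_X$ and $\varepsilon>0$ I pick $(y_n)\subset B_X$ as in the definition and, for $n$ large, the normalization $y=y_n/\|y_n\|$ works simultaneously for all $i$ since there are only finitely many of them; in particular $\|x_i-y\|\le 1+\varepsilon$ for every $i$. For the converse, given $(x_i)_{i=1}^N\subset S_X$ I apply the $\varepsilon$-condition (with $\varepsilon=1/n$) to the enlarged finite set $\{x_1,\dots,x_N,-x_1,\dots,-x_N\}\subset S_X$, obtaining $y_n\in S_X$ with $\|x_i-y_n\|\le 1+1/n$ and $\|(-x_i)-y_n\|=\|x_i+y_n\|\le 1+1/n$ for all $i$. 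As before the inequality $2=\|2x_i\|\le\|x_i+y_n\|+\|x_i-y_n\|$ gives the matching lower bounds, so $\|x_i\pm y_n\|\to 1$ for every $i$ and $\|y_n\|=1\to1$; thus $X$ is ASQ.

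The only place where one has to be slightly careful --- the ``main obstacle'', such as it is --- is this last step: the $\varepsilon$-condition as stated only controls $\|x_i-y\|$, so one must remember to feed it the doubled set $\{\pm x_i\}$ in order to recover the two-sided estimate $\|x_i\pm y_n\|\to1$ required by the definition of ASQ. Everything else is the routine interchange between ``$\|y_n\|\to1$ in $B_X$'' and ``$y\in S_X$'', together with the triangle-inequality trick $2=\|2x\|\le\|x+y\|+\|x-y\|$ that converts the upper bounds into the lower bounds needed for convergence.
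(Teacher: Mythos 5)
Your proof is correct and follows exactly the route the paper has in mind: the paper leaves the proposition as ``clear from the definition,'' noting only that the $\pm$ sign may be dropped because one has finitely many vectors, which is precisely your trick of applying the condition to the doubled set $\{\pm x_i\}$. The normalization $y_n/\|y_n\|$ and the lower bound from $2=\|2x_i\|\le\|x_i+y_n\|+\|x_i-y_n\|$ are the routine details the authors omit, and you have them right.
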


The following lemma is surely well-known but we
include it for easy reference.
\begin{lem}\label{lem:c0-lookalike}
  Assume $x,y \in S_X$ such that
  $\|x \pm y\| \le 1 + \varepsilon$, then
  \begin{equation*}
    (1-\varepsilon)\max(|\alpha|,|\beta|)
    \le \|\alpha x + \beta y\|
    \le
    (1+\varepsilon)\max(|\alpha|,|\beta|)
  \end{equation*}
  for all scalars $\alpha$ and $\beta$.
\end{lem}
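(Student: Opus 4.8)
The plan is to prove both inequalities by a direct estimate, exploiting that $\|x\pm y\|\le 1+\varepsilon$ lets us control $\alpha x+\beta y$ by rewriting it as a combination of the ``good'' vectors $x+y$ and $x-y$ together with $x$ and $y$ themselves. By homogeneity it suffices to treat the case $\max(|\alpha|,|\beta|)=1$; and by replacing $y$ with $-y$ if necessary (which preserves the hypothesis $\|x\pm y\|\le 1+\varepsilon$) we may assume $\beta\ge 0$, and similarly we may assume $\alpha\ge 0$ after possibly replacing $x$ by $-x$. So we reduce to showing $(1-\varepsilon)\le\|\alpha x+\beta y\|\le(1+\varepsilon)$ whenever $0\le\alpha,\beta\le 1$ and $\max(\alpha,\beta)=1$.

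For the \emph{upper} bound, suppose without loss of generality that $\alpha=1$ and $0\le\beta\le 1$. Then
\[
\alpha x+\beta y = (1-\beta)x + \beta(x+y),
\]
so $\|\alpha x+\beta y\|\le (1-\beta)\|x\|+\beta\|x+y\|\le (1-\beta)+\beta(1+\varepsilon)=1+\beta\varepsilon\le 1+\varepsilon$. The symmetric case $\beta=1$ is handled the same way using $x+y$ and $y$.

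For the \emph{lower} bound, again take $\alpha=1$, $0\le\beta\le1$ (the case $\beta=1$ being symmetric). Write $x$ as the average of the two good vectors,
\[
x=\tfrac12(x+y)+\tfrac12(x-y),
\]
and note $y = \tfrac12(x+y)-\tfrac12(x-y)$, hence
\[
\alpha x + \beta y = \tfrac{1+\beta}{2}(x+y)+\tfrac{1-\beta}{2}(x-y).
\]
Now estimate $\|x\|\le \tfrac12\|\alpha x+\beta y\|+\cdots$; more cleanly, since $x=\tfrac{1}{1+\beta}\bigl[(\alpha x+\beta y)+\beta(x-y)\cdot\tfrac{?}{?}\bigr]$ is awkward, I instead bound directly: $(\alpha x+\beta y) = x + \beta(y) $ does not immediately work, so the cleanest route is to observe $x = \tfrac{1}{1}\cdot\bigl(\alpha x+\beta y\bigr) - \beta y$ when $\alpha=1$, giving $1=\|x\|\le \|\alpha x+\beta y\|+\beta\|y\| = \|\alpha x+\beta y\|+\beta$. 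That only gives $\|\alpha x+\beta y\|\ge 1-\beta$, which is too weak. To fix this, use instead: from the displayed identity $\alpha x+\beta y=\tfrac{1+\beta}{2}(x+y)+\tfrac{1-\beta}{2}(x-y)$ we get $x+y = \tfrac{2}{1+\beta}(\alpha x+\beta y) - \tfrac{1-\beta}{1+\beta}(x-y)$, so $1-\varepsilon\le 1\le\|x+y\|$ is the wrong direction; rather we want a \emph{lower} bound on $\|x+y\|$ or $\|x-y\|$. The right observation is that $\max(\|x+y\|,\|x-y\|)\ge 2-(1+\varepsilon)\ge 1-\varepsilon$ is also the wrong direction. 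The correct and clean approach for the lower bound: since $2x=(x+y)+(x-y)$ and $2y=(x+y)-(x-y)$, for the vector $v=\alpha x+\beta y$ we have $v=\tfrac{\alpha+\beta}{2}(x+y)+\tfrac{\alpha-\beta}{2}(x-y)$, hence $(x+y)=v\cdot\tfrac{2}{\alpha+\beta}-(x-y)\tfrac{\alpha-\beta}{\alpha+\beta}$ when $\alpha+\beta\neq0$; then...

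I expect the main obstacle to be precisely this: getting the \emph{lower} bound with the sharp constant $1-\varepsilon$ rather than something weaker. The resolution is to go through a dual/separating functional argument or a telescoping identity. Concretely: pick a norming functional $x^*\in S_{X^*}$ with $x^*(x)=1$ (WLOG $\alpha=1\ge\beta\ge0$). From $\|x+y\|\le1+\varepsilon$ we get $x^*(y)\le x^*(x+y)\le 1+\varepsilon$, hence $x^*(y)\le\varepsilon$; from $\|x-y\|\le1+\varepsilon$ we get $-x^*(y)=x^*(x-y)-1\le\varepsilon$, hence $x^*(y)\ge-\varepsilon$, i.e. $|x^*(y)|\le\varepsilon$. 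Then $\|\alpha x+\beta y\|\ge x^*(\alpha x+\beta y)=\alpha+\beta x^*(y)\ge 1-\beta\varepsilon\ge 1-\varepsilon$, which is exactly the desired lower bound. So the clean proof uses a norming functional for the larger coefficient's vector, deduces $|x^*(y)|\le\varepsilon$ (or the symmetric statement $|x^*(x)|\le\varepsilon$ when $\beta=1$) from the two hypotheses, and concludes. The upper bound stays as the elementary convexity estimate above.
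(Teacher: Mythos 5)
Your final argument is correct, and it splits into two halves: the upper bound is essentially the paper's own estimate (your decomposition $\alpha x+\beta y=(1-\beta)x+\beta(x+y)$ is, up to rescaling and swapping the roles of $x$ and $y$, the same convexity/triangle-inequality step the paper uses), while your lower bound takes a genuinely different route. The paper stays entirely metric: it writes $\lambda^{-1}y+x=(1+\lambda^{-1})y-(y-x)$ and applies the reverse triangle inequality, $\|\lambda^{-1}y+x\|\ge(1+\lambda^{-1})-\|x-y\|\ge\lambda^{-1}-\varepsilon$, which after scaling gives $\|\lambda x+y\|\ge 1-\lambda\varepsilon$. You instead invoke Hahn--Banach: a functional $x^*\in S_{X^*}$ norming the vector carrying the larger coefficient satisfies $|x^*(y)|\le\varepsilon$ because of the two hypotheses $\|x\pm y\|\le 1+\varepsilon$, and then $\|x+\beta y\|\ge x^*(x+\beta y)\ge 1-\beta\varepsilon$. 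Both give the sharp constant (indeed both give the slightly stronger $1\pm\beta\varepsilon$). The paper's proof is marginally more elementary, needing no duality; your functional argument buys the extra observation that any functional norming $x$ almost annihilates $y$, which is precisely the idea the paper exploits later (in the ``moreover'' part of Theorem~\ref{thm:as-findim} and in Proposition~\ref{prop:asq-dual-oct}), so your route fits naturally with how the lemma is used downstream. The several abandoned attempts in the middle of your write-up are harmless since you discard them explicitly, but in a final version you should delete them and keep only the norming-functional argument.
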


\begin{proof}
  Let $M = \max(|\alpha|,|\beta|)$.
  We need to show that
  \begin{equation*}
    1-\varepsilon
    \le \|\frac{\alpha}{M} x + \frac{\beta}{M} y\|
    \le
    1+\varepsilon.
  \end{equation*}
  It is enough to show
  \begin{equation*}
    1-\varepsilon
    \le \|\lambda x \pm y\|
    \le
    1+\varepsilon
  \end{equation*}
  for all $0 < \lambda \le 1$.
  By the triangle inequality, we have that
  $1-\varepsilon \le \|x \pm y\| \le 1+ \varepsilon$
  whenever $x,y \in S_X$ satisfies $\|x \pm y\| \le 1+\varepsilon$.
  It follows that
  \begin{equation*}
    \|\lambda^{-1} y + x\|
    = \|(1+\lambda^{-1})y - (y-x)\|
    \ge (1+\lambda^{-1}) - \|x-y\|
    \ge \lambda^{-1} - \varepsilon
  \end{equation*}
  since $\|x-y\| \le 1 + \varepsilon$.
  Hence $\|\lambda x + y\|
  \ge 1-\varepsilon \lambda \ge 1 - \varepsilon$.

  Also
  \begin{equation*}
    \|\lambda^{-1} y + x\|
    = \|(\lambda^{-1}-1)y + (y+x)\|
    \le (\lambda^{-1}-1) + 1+\varepsilon
    = \lambda^{-1} + \varepsilon
  \end{equation*}
  and hence $\|\lambda x + y \| \le 1 + \varepsilon \lambda
  \le 1 + \varepsilon$.
\end{proof}

\begin{cor}\label{cor:las-2dimellinfty}
  If $X$ is LASQ, then $X$ contains
  almost isometric copies of $\ell_\infty^2$.
\end{cor}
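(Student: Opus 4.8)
The plan is to combine the one‑dimensional reformulation of LASQ in Proposition~\ref{prop:lasNas-1dim} with the two‑sided estimate in Lemma~\ref{lem:c0-lookalike}; together these already do essentially all of the work, so the argument is short.

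Fix $\varepsilon \in (0,1)$. Since $X$ is LASQ, I would take any $x \in S_X$ and apply Proposition~\ref{prop:lasNas-1dim} to obtain $y \in S_X$ with $\|x \pm y\| \le 1+\varepsilon$. First observe that $x$ and $y$ are linearly independent: if $y = \pm x$, then one of the two quantities $\|x+y\|$, $\|x-y\|$ equals $2$, which would force $2 \le 1+\varepsilon$, contradicting $\varepsilon < 1$. Hence $Y := \linspan\{x,y\}$ is two‑dimensional. (Alternatively, the lower estimate below already shows that the relevant map is injective.)

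Next I would consider the linear map $T \colon \ell_\infty^2 \to Y$ defined by $T(\alpha,\beta) = \alpha x + \beta y$. Lemma~\ref{lem:c0-lookalike} says exactly that
\[
  (1-\varepsilon)\,\|(\alpha,\beta)\|_\infty \;\le\; \|T(\alpha,\beta)\| \;\le\; (1+\varepsilon)\,\|(\alpha,\beta)\|_\infty
\]
for all scalars $\alpha,\beta$. Thus $T$ is an onto isomorphism with $\|T\| \le 1+\varepsilon$ and $\|T^{-1}\| \le (1-\varepsilon)^{-1}$, so the Banach--Mazur distance satisfies $d(Y,\ell_\infty^2) \le \|T\|\,\|T^{-1}\| \le \frac{1+\varepsilon}{1-\varepsilon}$. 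Since $\frac{1+\varepsilon}{1-\varepsilon} \to 1$ as $\varepsilon \to 0^+$, for every $\delta > 0$ we can produce a two‑dimensional subspace of $X$ that is $(1+\delta)$‑isomorphic to $\ell_\infty^2$; that is, $X$ contains almost isometric copies of $\ell_\infty^2$.

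I do not expect any genuine obstacle here: the content is entirely carried by the preceding lemma, and the only minor point to address is that the vector $y$ supplied by the LASQ hypothesis is not a scalar multiple of $x$, which — as noted above — is automatic once $\varepsilon < 1$.
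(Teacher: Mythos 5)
Your proposal is correct and follows exactly the route the paper intends: the corollary is stated as an immediate consequence of Lemma~\ref{lem:c0-lookalike} combined with the LASQ reformulation in Proposition~\ref{prop:lasNas-1dim}, which is precisely the argument you give (the map $(\alpha,\beta)\mapsto \alpha x+\beta y$ has distortion at most $\frac{1+\varepsilon}{1-\varepsilon}$). Your added remark on the linear independence of $x$ and $y$ is a harmless extra detail the paper leaves implicit.
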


For ASQ Banach spaces we can say even more.

\begin{thm}\label{thm:as-findim}
  Let $X$ be a Banach space.
  If $X$ is ASQ then for every
  finite dimensional subspace $E \subset X$
  and every $\varepsilon > 0$
  there exists $y \in S_X$ such that
  \begin{equation*}
    (1-\varepsilon)\max(\|x\|,|\lambda|)
    \le \|x + \lambda y\|
    \le
    (1+\varepsilon)\max(\|x\|,|\lambda|)
  \end{equation*}
  for all scalars $\lambda$ and all $x \in E$.

  Moreover, given a finite dimensional subspace $F \subset X^*$
  we may choose the above $y$ so that $|f(y)| < \varepsilon \|f\|$
  for every $f \in F$.
\end{thm}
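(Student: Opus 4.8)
The plan is to establish the two assertions in turn, using the first one (once proved) as an ingredient in the proof of the second.

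\emph{First assertion.} Here the idea is to pass from a finite net to Lemma~\ref{lem:c0-lookalike} by compactness. Since $E$ is finite dimensional, $S_E$ is compact, so for a small $\delta>0$ we may fix a finite $\delta$-net $x_1,\dots,x_N\in S_E$. Applying the ASQ characterization of Proposition~\ref{prop:lasNas-1dim} to the finite set $\{\pm x_1,\dots,\pm x_N\}\subset S_X$ with a small error $\varepsilon'$, we obtain $y\in S_X$ with $\|x_i\pm y\|\le 1+\varepsilon'$ for all $i$. For an arbitrary $x\in S_E$ choose $i$ with $\|x-x_i\|<\delta$; then $\|x\pm y\|\le 1+\varepsilon'+\delta$. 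Taking $\varepsilon'+\delta\le\varepsilon$ and feeding the pair $(x,y)$ into Lemma~\ref{lem:c0-lookalike} gives
\[
  (1-\varepsilon)\max(|\alpha|,|\beta|)\le\|\alpha x+\beta y\|\le(1+\varepsilon)\max(|\alpha|,|\beta|)
\]
for all scalars $\alpha,\beta$. Writing a nonzero $x_0\in E$ as $\|x_0\|\cdot(x_0/\|x_0\|)$ with $x_0/\|x_0\|\in S_E$, and putting $\beta=\lambda$, this becomes the desired estimate; the case $x_0=0$ is trivial.

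\emph{Moreover part.} Now the plan is to iterate the first assertion to produce inside $X$ a long almost-$\ell_\infty^n$ system sitting ``over'' $E$, and then to select one coordinate of this system that is almost annihilated by $F$, by a counting argument. We may assume $F\neq\{0\}$; fix a basis $f_1,\dots,f_d$ of $F$ and, using equivalence of norms on the finite dimensional space $F$, a constant $C\ge 1$ such that $\sum_{j=1}^d|c_j|\,\|f_j\|\le C\|f\|$ whenever $f=\sum_{j=1}^d c_jf_j\in F$. Fix an integer $n>4dC/\varepsilon$ and then an $\eta\in(0,\varepsilon]$ small enough that $(1+\eta)^{n-1}\le 2$. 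Construct $y_1,\dots,y_n\in S_X$ recursively: given $y_1,\dots,y_{k-1}$, apply the first assertion to the finite dimensional subspace $E_{k-1}:=\linspan(E\cup\{y_1,\dots,y_{k-1}\})$ with error $\eta$ to get $y_k\in S_X$ with
\[
  (1-\eta)\max(\|z\|,|\mu|)\le\|z+\mu y_k\|\le(1+\eta)\max(\|z\|,|\mu|)\qquad\text{for all }z\in E_{k-1}\text{ and all scalars }\mu.
\]
Since $\sum_{k<j}\theta_ky_k\in E_{j-1}$ at each step, an immediate induction gives $\bigl\|\sum_{k=1}^n\theta_ky_k\bigr\|\le(1+\eta)^{n-1}\le 2$ for every choice of signs $\theta_k\in\{-1,1\}$.

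Consequently, for each $j$, choosing signs with $\theta_k f_j(y_k)=|f_j(y_k)|$ yields $\sum_{k=1}^n|f_j(y_k)|=f_j\bigl(\sum_{k=1}^n\theta_ky_k\bigr)\le 2\|f_j\|$, so at most $4C/\varepsilon$ of the indices $k\le n$ can satisfy $|f_j(y_k)|>\tfrac{\varepsilon}{2C}\|f_j\|$. As $n>4dC/\varepsilon$, the union over $j=1,\dots,d$ of these bad index sets misses some $k^*\le n$, and then $|f_j(y_{k^*})|\le\tfrac{\varepsilon}{2C}\|f_j\|$ for every $j$; hence for any $f=\sum_j c_jf_j\in F$ we get $|f(y_{k^*})|\le\tfrac{\varepsilon}{2C}\sum_j|c_j|\,\|f_j\|\le\tfrac{\varepsilon}{2}\|f\|<\varepsilon\|f\|$. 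Finally $y:=y_{k^*}$ satisfies the estimate of the first assertion over $E\subseteq E_{k^*-1}$ with error $\eta\le\varepsilon$, so this $y$ does the job. The one delicate point is the error bookkeeping in the iteration: the length $n$ of the system is dictated by the size of $F$ and must be fixed before one knows how small $\eta$ has to be so that the compounded factor $(1+\eta)^{n-1}$ stays under control; everything else is routine.
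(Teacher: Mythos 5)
Your proof of the first assertion is essentially the paper's own: a finite net in $S_E$, one application of the ASQ definition from Proposition~\ref{prop:lasNas-1dim}, the triangle inequality to pass from the net to all of $S_E$, and then Lemma~\ref{lem:c0-lookalike} plus homogeneity. For the \emph{moreover} part you take a genuinely different, and correct, route. The paper does it in one step: it fixes an $\varepsilon/2$-net $(f_i)$ of $S_F$, picks norming vectors $z_i\in S_X$ with $f_i(z_i)>1-\varepsilon/4$, applies the first assertion to $\linspan\{E,(z_i)\}$, and reads off $|f_i(y)|<\varepsilon/2$ directly from $\|z_i\pm y\|\le 1+\varepsilon/4$, a net argument then covering all of $S_F$. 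You instead iterate the first assertion to build an almost isometric $\ell_\infty^n$ chain $y_1,\dots,y_n$ over $E$ (the same iteration that the paper uses later in Lemma~\ref{lem:epsiso2}) and then pigeonhole: the bound $\|\sum_k\theta_k y_k\|\le(1+\eta)^{n-1}\le 2$ for all signs gives $\sum_k|f_j(y_k)|\le 2\|f_j\|$ for each basis functional, so once $n>4dC/\varepsilon$ some $y_{k^*}$ is nearly annihilated by all of $F$, while the $E_{k^*-1}$-estimate restricted to $E\subset E_{k^*-1}$ gives the metric part; your order of choices (first $n$ from $d,C,\varepsilon$, then $\eta$ with $(1+\eta)^{n-1}\le 2$) is the right one, so the bookkeeping closes. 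The paper's argument is shorter and needs only one enlargement of $E$; yours costs an induction but is quantitative in a pleasant way: it shows that in any such chain all but at most $4dC/\varepsilon$ of the $y_k$ work simultaneously, and it yields the slightly stronger estimate $|f(y)|\le\tfrac{\varepsilon}{2}\|f\|$ (note that both your proof and the paper's give the stated strict inequality only for $f\ne 0$, the case $f=0$ being a vacuous edge case of the formulation).
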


It is clear from Proposition~\ref{prop:lasNas-1dim}
that the above theorem is actually a characterization of ASQ.

\begin{proof}
  Let $E$ be a finite dimensional subspace of $X$ and
  let $\varepsilon > 0$.
  Find an $\varepsilon/2$-net $(x_i)_{i=1}^N$ for $S_E$.
  Choose $y \in S_X$ such that
  $\|x_i \pm y\| \le 1 + \varepsilon/2$.
  By the triangle inequality $\|2a\| - \|a-b\|
  \le \|a+b\|$ and hence $\|x_i \pm y\| \ge 1 - \varepsilon/2$.

  Let $x \in S_E$. Find $i$ such that $\|x_i - x\| \le \varepsilon/2$.
  Then
  \begin{equation*}
    \|x \pm y\| \le \|x_i \pm y\| + \|x-x_i\| \le 1 + \varepsilon
  \end{equation*}
  and thus also $\|x \pm y\| \ge 1 - \varepsilon$.
  Hence by using Lemma~\ref{lem:c0-lookalike} we get
  \begin{equation*}
    (1-\varepsilon)\max(\|x\|,|\lambda|)
    \le \|x + \lambda y\|
    \le
    (1+\varepsilon)\max(\|x\|,|\lambda|)
  \end{equation*}
  for all scalars $\lambda$ and all $x \in E$.

  For the moreover part let $F \subset X^*$ be
  a finite dimensional subspace and let
  $(f_i)_{i=1}^M \subset S_F$ be an $\varepsilon/2$-net.
  For each $i$ choose $z_i \in S_X$ with $f_i(z_i) > 1-\varepsilon/4$.
  Let $E' = \linspan\{E,(z_i)_{i=1}^M\}$ and use the
  first part of the proof to find $y \in S_X$ such that
  \begin{equation*}
    (1-\varepsilon/4)\max(\|x\|,|\lambda|)
    \le \|x + \lambda y\|
    \le
    (1+\varepsilon/4)\max(\|x\|,|\lambda|)
  \end{equation*}
  for all scalars $\lambda$ and all $x \in E'$.

  Since $|f_i(z_i \pm y)| \le \|z_i \pm y\| \le 1+\varepsilon/4$
  we get
  \begin{align*}
    -\varepsilon/2 &= 1-\varepsilon/4-(1+\varepsilon/4)
    \le f_i(z_i)-f_i(z_i-y) = f_i(y) \\
    &\le f_i(z_i + y) - f_i(z_i)
    \le 1+ \varepsilon/4 - 1 + \varepsilon/4 = \varepsilon/2.
  \end{align*}
  so that $|f_i(y)| < \varepsilon/2$.
  Thus for every $f \in S_F$, and for some $i$, we have
  $|f(y)| \le |(f-f_i)(y)| + |f_i(y)| \le \varepsilon$.
\end{proof}

\begin{prop}\label{prop:asq-dual-oct}
  If $X$ is ASQ, then $X^*$ is octahedral.
\end{prop}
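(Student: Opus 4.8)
The plan is to verify the definition of octahedrality for $X^{*}$ directly, using Theorem~\ref{thm:as-findim} to produce a suitable new direction in $X$ together with a Hahn--Banach functional dual to it. Fix $f_{1},\dots,f_{N}\in S_{X^{*}}$ and $\varepsilon>0$, and choose a small $\delta>0$ (one may take $\delta=\varepsilon/5$, and also assume $\varepsilon<1$). First I would pick $x_{i}\in S_{X}$ with $f_{i}(x_{i})>1-\delta$, set $E=\linspan\{x_{1},\dots,x_{N}\}$ and $F=\linspan\{f_{1},\dots,f_{N}\}$, and apply Theorem~\ref{thm:as-findim}, including its moreover part, to obtain $y\in S_{X}$ with
\begin{equation*}
  (1-\delta)\max(\|x\|,|\lambda|)\le\|x+\lambda y\|\le(1+\delta)\max(\|x\|,|\lambda|)
\end{equation*}
for all $x\in E$ and all scalars $\lambda$, and with $|f_{i}(y)|<\delta$ for every $i$.

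The second step is to manufacture the perturbing functional $g\in S_{X^{*}}$. Taking $\lambda=1$ in the lower estimate gives $\|x+y\|\ge1-\delta$ for all $x\in E$, hence $\operatorname{dist}(y,E)\ge1-\delta>0$ (and clearly $\le 1$, so $y\notin E$). The linear functional on $\linspan\{E,y\}$ that vanishes on $E$ and sends $y$ to $\operatorname{dist}(y,E)$ then has norm one, and a norm-preserving Hahn--Banach extension produces $g\in S_{X^{*}}$ with $g|_{E}=0$ and $g(y)\ge1-\delta$.

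Finally I would bound $\|f_{i}\pm g\|$ from below by testing against $z_{i}^{\pm}=(x_{i}\pm y)/\|x_{i}\pm y\|\in S_{X}$. Since $x_{i}\in E$, the upper estimate with $\lambda=\pm1$ gives $\|x_{i}\pm y\|\le1+\delta$, and using $g(x_{i})=0$, $g(y)\ge1-\delta$, $f_{i}(x_{i})>1-\delta$ and $|f_{i}(y)|<\delta$ one obtains
\begin{equation*}
  \|f_{i}\pm g\|\ge(f_{i}\pm g)(z_{i}^{\pm})=\frac{f_{i}(x_{i})\pm f_{i}(y)+g(y)}{\|x_{i}\pm y\|}>\frac{2-3\delta}{1+\delta}\ge 2-\varepsilon .
\end{equation*}
As this holds for every $i$, $X^{*}$ is octahedral. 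The only nonroutine point is the construction of $g$: Theorem~\ref{thm:as-findim} forces $y$ to be almost at distance one from the whole subspace $E$, which is precisely what allows a single functional $g$ to be simultaneously almost norming at $y$ and identically zero on all of the $x_{i}$; without the vanishing on $E$ the cross term $g(x_{i})$ could destroy one of the two required inequalities $\|f_{i}+g\|\ge2-\varepsilon$ and $\|f_{i}-g\|\ge2-\varepsilon$. Everything else is just the triangle inequality.
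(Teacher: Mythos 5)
Your proof is correct and follows essentially the same route as the paper: both arguments pick $x_i$ nearly norming $f_i$, invoke Theorem~\ref{thm:as-findim} (including the moreover part to make $|f_i(y)|$ small), and obtain the bound $\frac{2-3\delta}{1+\delta}$ by testing $f_i\pm g$ against the normalized vectors $(x_i\pm y)/\|x_i\pm y\|$. The only cosmetic difference is the auxiliary functional: the paper takes $y^*\in S_{X^*}$ with $y^*(y)=1$ and deduces $|y^*(x_i)|\le\varepsilon$ from $\|x_i\pm y\|\le 1+\varepsilon$ (then cites the characterization of octahedrality from \cite{HLP}), whereas you build $g$ vanishing on $E$ and almost norming $y$ via a distance estimate and Hahn--Banach, and then verify the definition with both signs directly; both devices play the same role and give the same estimate.
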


\begin{proof}
  Let $x_1^*,x_2^*,\ldots,x_n^* \in S_{X^*}$ and $\varepsilon > 0$.

  Find $x_1,x_2,\ldots,x_n \in S_X$ such that $x_i^*(x_i) > 1-\varepsilon$.
  Using Theorem~\ref{thm:as-findim} find
  $y \in S_X$ such that $\|x_i \pm y\| \le 1+ \varepsilon$
  and $|x_i^*(y)| < \varepsilon$.

  Find $y^* \in S_{X^*}$ such that $y^*(y) = 1$.
  Then
  \begin{equation*}
    1 + \varepsilon \ge \|x_i \pm y\|
    \ge \pm y^*(x_i) + y^*(y)
    = \pm y^*(x_i) + 1
  \end{equation*}
  and thus $|y^*(x_i)| \le \varepsilon$.
  Now
  \begin{align*}
    \|x_i+y\|\|x_i^* + y^*\|
    &\ge
    x_i^*(x_i) + x_i^*(y) + y^*(x_i) + y^*(y) \\
    &> 1 - \varepsilon - 2\varepsilon + 1
  \end{align*}
  and hence
  \begin{equation*}
    \|x_i^* + y^*\| > \frac{2-3\varepsilon}{1+\varepsilon}
  \end{equation*}
  which shows that $X^*$ is octahedral by
  Proposition~2.1 in \cite{HLP}.
\end{proof}

Repeated use of Theorem~\ref{thm:as-findim} gives the following lemma.

\begin{lem}\label{lem:epsiso2}
  If $X$ is ASQ, then for every finite dimensional subspace $E$ of
  $X$ and every $\varepsilon > 0$ there exists a subspace $Y$ of $X$
  which is $\varepsilon$-isometric to $c_0$ such that
  $E \oplus Y$ is $\varepsilon$-isometric to $E \oplus_\infty c_0$.
\end{lem}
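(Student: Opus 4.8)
The goal is to produce, for a given finite-dimensional $E \subset X$ and $\varepsilon > 0$, a subspace $Y \cong c_0$ (up to $\varepsilon$) so that $E \oplus Y$ is $\varepsilon$-isometric to $E \oplus_\infty c_0$. The natural approach is to build $Y$ as the closed span of a sequence $(y_n)$ of unit vectors produced by iterating Theorem~\ref{thm:as-findim}, choosing at each stage a perturbation $\delta_n \downarrow 0$ fast enough that the accumulated errors sum to something controlled by $\varepsilon$ (e.g. $\prod_n (1+\delta_n) \le 1+\varepsilon$ and $\prod_n(1-\delta_n) \ge 1-\varepsilon$, which is arranged by taking $\delta_n = \varepsilon 2^{-n-1}$ or similar).

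First I would set up the recursion: let $E_0 = E$, and having chosen $y_1,\dots,y_{n-1}$, set $E_{n-1} = \linspan\{E, y_1,\dots,y_{n-1}\}$, which is finite-dimensional. Apply Theorem~\ref{thm:as-findim} to $E_{n-1}$ with parameter $\delta_n$ to obtain $y_n \in S_X$ satisfying
\begin{equation*}
  (1-\delta_n)\max(\|x\|,|\lambda|) \le \|x+\lambda y_n\| \le (1+\delta_n)\max(\|x\|,|\lambda|)
\end{equation*}
for all $x \in E_{n-1}$ and all scalars $\lambda$. Set $Y = \overline{\linspan}\{y_n : n \ge 1\}$. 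The key estimate to establish by induction on $k$ is that for every $x \in E$ and all scalars $\lambda_1,\dots,\lambda_k$,
\begin{equation*}
  \Big(\prod_{j=1}^{k}(1-\delta_j)\Big)\max\big(\|x\|, \max_j |\lambda_j|\big)
  \le \Big\|x + \sum_{j=1}^{k}\lambda_j y_j\Big\|
  \le \Big(\prod_{j=1}^{k}(1+\delta_j)\Big)\max\big(\|x\|,\max_j|\lambda_j|\big).
\end{equation*}
The inductive step peels off $\lambda_k y_k$: the vector $x + \sum_{j<k}\lambda_j y_j$ lies in $E_{k-1}$, so Theorem~\ref{thm:as-findim} applied at stage $k$ compares $\|(x+\sum_{j<k}\lambda_j y_j) + \lambda_k y_k\|$ with $\max(\|x+\sum_{j<k}\lambda_j y_j\|, |\lambda_k|)$, and then the induction hypothesis replaces $\|x+\sum_{j<k}\lambda_j y_j\|$ by $\max(\|x\|,\max_{j<k}|\lambda_j|)$ up to the earlier factors; the two maxima combine into $\max(\|x\|,\max_{j\le k}|\lambda_j|)$ as desired. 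Passing to the closed span and taking $k \to \infty$, together with $\prod(1\pm\delta_n)$ within $1\pm\varepsilon$ of $1$, gives that the map $(x,(\lambda_n)) \mapsto x + \sum \lambda_n y_n$ is an $\varepsilon$-isometry from $E \oplus_\infty c_0$ onto $E \oplus Y$; taking $E = 0$ (or the restriction to $Y$) shows $Y$ itself is $\varepsilon$-isometric to $c_0$.

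The main obstacle is bookkeeping rather than conceptual: one must be careful that the estimate is uniform over \emph{all} finitely supported coefficient vectors simultaneously (so that it genuinely identifies the $\sup$-norm and not just each coordinate), and that a single product of the $\delta_n$'s — not a growing sum — controls the total distortion, which is why the multiplicative form of Theorem~\ref{thm:as-findim} is exactly what is needed. A minor point worth a sentence is that the $y_n$ are automatically $\varepsilon$-separated and norm-one, so $(y_n)$ is genuinely a basis of $Y$ equivalent to the unit vector basis of $c_0$, and that density of finitely supported vectors lets us pass the two-sided estimate to the closure.
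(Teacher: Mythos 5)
Your proposal is correct and follows essentially the same route as the paper's proof: iterate Theorem~\ref{thm:as-findim} with errors $\delta_n$ whose products stay within $1\pm\varepsilon$, peel off one coordinate at a time to get the telescoped two-sided estimate on $\|x+\sum_j\lambda_j y_j\|$, and pass to the closed span by density of finitely supported vectors. The only cosmetic difference is that the paper phrases the conclusion via an explicit map $S(e,a)=e+Ta$ rather than your limiting argument, but the content is identical.
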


\begin{proof}
  Find sequence $(\varepsilon_n) \subset \mathbb{R}^+$ such
  that $\prod_{n=1}^\infty (1+\varepsilon_n) < 1 + \varepsilon$
  and  $\prod_{n=1}^\infty (1-\varepsilon_n) > 1 - \varepsilon$.
  Using Theorem~\ref{thm:as-findim} we
  inductively choose a sequence $(y_n) \subset S_X$ such that
  \begin{equation*}
    (1-\varepsilon_n)\max\{\|e\|,|\lambda|\}
    \le
    \|e + \lambda y_n\|
    \le
    (1+\varepsilon_n)\max\{\|e\|,|\lambda|\}
  \end{equation*}
  for every $e \in \linspan\{E,(y_i)_{i=1}^{n-1}\}$ and every $\lambda
  \in \mathbb{R}$.
  Then $Y = \overline{\linspan\{(y_n)\}}$ is
  $\varepsilon$-isometric
  to $c_0$ and defining $S: E \oplus_\infty c_0 \to E \oplus Y$
  by $S(e,a) = e + Ta$ where $T:c_0 \to Y$ is the
  $\varepsilon$-isometry.
  We have
  \begin{align*}
    \|S(e, \sum_{n=1}^N a_ne_n)\| & = \|e + \sum_{n=1}^N a_ny_n\|
    \le (1 + \varepsilon_N)\max\{\|e + \sum_{n=1}^{N-1} a_ny_n\|,
    |a_N|\} \\
    &\le \cdots
    \le \prod_{n=1}^N(1 +
    \varepsilon_n)\max\{\|e\|,|a_1|,|a_2|,\ldots,|a_N|\}\\
    & < (1 + \varepsilon)\|(e, \sum_{n=1}^N a_ne_n)\|,
  \end{align*}
  and similarly $\|S(e, \sum_{n=1}^N a_ne_n)\| > (1 -
  \varepsilon)\|(e, \sum_{n=1}^N a_ne_n)\|$. Thus $S$ must be an
  $\varepsilon$-isometry onto $E \oplus Y$ since $T$ is onto $Y$.
\end{proof}

\begin{rem}\label{rem:asymp_iso_copies_c0}
  In Proposition~6 in \cite{MR1814162} Pfitzner showed
  that $M$-embedded spaces contain an asymptotically
  isometric copy of $c_0$ using the local characterization
  of $M$-ideals. If $X$ is ASQ and we use Theorem~\ref{thm:as-findim}
  in Pfitzner's proof we get that $X$
  actually contains an asymptotically isometric copy of $c_0$,
  and hence $X^*$ contains an asymptotically isometric copy of
  $\ell_1$ (see Theorem~2 in \cite{DowJLT}).
\end{rem}

A consequence of Lemma~\ref{lem:epsiso2} is that
the sequence $(y_n)$ in the definition of ASQ may be
chosen to be weakly null. This enables us to
connect the ASQ and WASQ properties.

\begin{thm}\label{thm-asiswas}
  If a Banach space $X$ is ASQ then
  for every $x_1,x_2,\ldots,x_N \in S_X$ there exists $(y_n) \subset B_X$
  such that $\|x_i \pm y_n\| \to 1$ for all $i$,
  $y_n \to 0$ weakly, and $\|y_n\| \to 1$.

  In particular, ASQ implies WASQ.
\end{thm}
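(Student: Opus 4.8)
The plan is to build the weakly null sequence $(y_n)$ by applying Lemma~\ref{lem:epsiso2} with $E = \linspan\{x_1,\ldots,x_N\}$. For each $n$, let $\varepsilon = 1/n$ and let $Y_n$ be the subspace of $X$ furnished by the lemma, so that $E \oplus Y_n$ is $(1/n)$-isometric to $E \oplus_\infty c_0$. Inside $Y_n$ pick the unit vector $z_n$ corresponding to the first basis vector $e_1$ of $c_0$ under the $(1/n)$-isometry $T_n : c_0 \to Y_n$; then $\|z_n\| \to 1$ and, from the $\infty$-sum estimate applied to $(x_i, e_1)$, one gets $1 - 1/n \le \|x_i \pm z_n\| \le 1 + 1/n$ for all $i$, hence $\|x_i \pm z_n\| \to 1$. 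This takes care of the norm conditions; the only remaining point is weak nullity.

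For the weak convergence I would exploit the \emph{moreover} part of Theorem~\ref{thm:as-findim}, which lets us also control finitely many functionals. The cleanest route is a diagonal argument: since $X$ contains (almost) isometric copies of $c_0$, one expects the natural candidate sequence to behave like the unit vector basis of $c_0$, which is weakly null. Concretely, I would instead run the induction \emph{within a single application} of (the proof of) Lemma~\ref{lem:epsiso2}: build one sequence $(y_n) \subset S_X$ such that for each $n$,
\begin{equation*}
  (1 - \varepsilon_n)\max\{\|e\|, |\lambda|\} \le \|e + \lambda y_n\| \le (1 + \varepsilon_n)\max\{\|e\|, |\lambda|\}
\end{equation*}
for all $e \in \linspan\{E, y_1, \ldots, y_{n-1}\}$, with $\sum \varepsilon_n$ small, so that $Y = \overline{\linspan}\{y_n\}$ together with $E$ is $\varepsilon$-isometric to $E \oplus_\infty c_0$ and $(y_n)$ corresponds to the $c_0$-basis. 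Then $\|x_i \pm y_n\| \to 1$ automatically (take the two-term estimate with $e = x_i$, $\lambda = \pm 1$, noting $\varepsilon_n \to 0$), and $\|y_n\| = 1$. It remains to check $y_n \to 0$ weakly.

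The main obstacle is precisely this last step, because $\varepsilon$-isometry to $c_0$ is not by itself enough to conclude weak nullity in the ambient space $X$ — a sequence can be basic-equivalent to the $c_0$-basis yet fail to be weakly null if its closed span is badly positioned. To fix this, I would feed finitely many functionals into the induction via the moreover clause of Theorem~\ref{thm:as-findim}: fix a countable norming-type family, or rather, at step $n$ enlarge the finite-dimensional subspace $F_n \subset X^*$ on which $y_n$ is required to be $\varepsilon_n$-small to include a chosen $\varepsilon_n$-net of $S_{F_{n-1}}$ together with new functionals, arranged so that any given $f \in X^*$ is eventually $(\text{small})$-approximated. More efficiently: it suffices to show $f(y_n) \to 0$ for $f$ in a weak$^*$-dense subset of $B_{X^*}$, but since we cannot enumerate all of $X^*$, the honest argument is that a bounded sequence with $\|x_i \pm y_n\| \to 1$ generating (together with $E$) an isometric-in-the-limit copy of $E \oplus_\infty c_0$ must have $(y_n)$ weakly null: any weak cluster point $y$ of $(y_n)$ satisfies, by weak lower semicontinuity of the norm and the $c_0$-estimates, $\|y_m \pm y\| \le 1$ for all $m$ and $\|\lambda y_m + y\| \le \max\{\|\,\cdot\,\|\}$-type bounds forcing $\|y\| \le \|y_m - y\| \cdot(\text{stuff})$; pushing this through the $\ell_\infty$-structure (where the basis \emph{is} weakly null) and transferring via the near-isometry $S$ yields $y = 0$. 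I expect writing this transfer carefully — quantifying how the $\varepsilon$-isometry degrades the weak-nullity witness — to be the only delicate part; everything else is a direct unwinding of Lemma~\ref{lem:epsiso2}.
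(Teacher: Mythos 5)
Your construction of $(y_n)$ is exactly the paper's: run the induction of Lemma~\ref{lem:epsiso2} once with $E=\linspan\{x_1,\ldots,x_N\}$, so that $(y_n)$ corresponds to the unit vector basis of $c_0$ under the $\varepsilon$-isometry $S\colon E\oplus_\infty c_0\to E\oplus Y$, and the estimates $\|x_i\pm y_n\|\to 1$ follow from the two-term inequality as you say. The gap is in the weak nullity step, and it comes from a false premise: you assert that being (almost isometrically) equivalent to the $c_0$-basis ``is not by itself enough to conclude weak nullity in the ambient space $X$'' because the span might be ``badly positioned''. In fact it is enough. The map $S$ is a bounded linear bijection onto $E\oplus Y$, hence weak--weak continuous; $(0,e_n)\to 0$ weakly in $E\oplus_\infty c_0$ since $(e_n)$ is weakly null in $c_0$; therefore $y_n=S(0,e_n)\to 0$ weakly in $E\oplus Y$. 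Weak nullity in a closed subspace automatically passes to $X$, because every $f\in X^*$ restricts to a functional on $E\oplus Y$ (equivalently, by Hahn--Banach the weak topology of a subspace is the restriction of the weak topology of $X$). This two-line observation is precisely how the paper concludes, and there is no delicate ``transfer'' to quantify.

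Because you reject this route, your substitute arguments do not close the gap. Feeding finitely many functionals into the induction via the moreover part of Theorem~\ref{thm:as-findim} cannot reach all of $X^*$, as you yourself concede. The weak cluster point argument is also not viable as stated: in a non-reflexive space a bounded sequence need not have any weak cluster points, so proving that every weak cluster point is $0$ would not establish weak convergence to $0$; moreover the inequalities you invoke there (``forcing $\|y\|\le\|y_m-y\|\cdot(\text{stuff})$'') are not substantiated. So the norm part of your proposal is correct and coincides with the paper's proof, but the weak nullity part --- the actual content of the theorem --- is missing, and the difficulty you identify as the main obstacle is not an obstacle at all.
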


\begin{proof}
  Let $x_1,x_2,\ldots,x_N \in S_X$ and $E = \linspan\{(x_i)_{i=1}^N\}$,
  and choose a sequence $(y_n) \subset S_X$ as in the proof of
  Lemma~\ref{lem:epsiso2}.
  Let $Z = E \oplus_\infty c_0$ and $z_i = (x_i, 0) \in Z$.
  Since the standard basis $(e_n)_{n=1}^\infty \subset S_{c_0}$ is
  weakly null so is $w_n = (0, e_n)$ in $Z$.
  By Lemma~\ref{lem:epsiso2} there exists an $\varepsilon$-isometry $S$
  from $Z$ onto $E \oplus Y$ where
  $Y = \overline{\linspan\{(y_n)\}}$.
  The weak-weak continuity of $S$ shows that $y_n \to 0$ weakly
  in $E \oplus Y$ and hence also in $X$.

  By definition $S(e, \pm e_n) = e \pm y_n$ for every $e \in E$.
  Since
  \begin{equation*}
    (1 -\varepsilon_n)\max\{\|e\|, 1\} \le \|e \pm y_n\| \le (1 +
    \varepsilon_n)\max\{\|e\|, 1\}
  \end{equation*}
  for every $e \in E$, we in particular have
  $(1 - \varepsilon_n)\le \|x_i \pm y_n\| \le (1 +
  \varepsilon_n)$, so $\|x_i \pm y_n\| \to 1$.
\end{proof}

We know that every ASQ space contains $c_0$. Next we will show
that a separable Banach space containing $c_0$ can be equivalently renormed to be
ASQ. For separable spaces this improves \cite[Proposition~4.7]{ALN01} which says
that any Banach space containing $c_0$ can be equivalently renormed to
have the SD2P (see also
\cite[Proposition~2.6]{MR2170568} for the D2P case).

The proof of the following result is based on a renorming technique which
appears in \cite[Lemma~2.3]{BGLPRZ}.

\begin{thm}\label{thm:c0-asq}
  A separable Banach space can be equivalently renormed to
  be ASQ if and only if it contains a copy of $c_0$.
\end{thm}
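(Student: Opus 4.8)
The plan is to prove the two directions separately. The easy direction is that an ASQ space must contain a copy of $c_0$: this is immediate from Lemma~\ref{lem:epsiso2} (apply it with $E=\{0\}$ and any $\varepsilon>0$ to get a subspace $\varepsilon$-isometric to $c_0$, which in particular is isomorphic to $c_0$). So the substance is the renorming direction: given a separable Banach space $X$ with a subspace isomorphic to $c_0$, produce an equivalent norm making $X$ ASQ.

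For the renorming, I would follow the technique of \cite[Lemma~2.3]{BGLPRZ}. Fix a subspace $Z\subset X$ isomorphic to $c_0$ and let $(z_n)$ be a sequence in $Z$ equivalent to the unit vector basis of $c_0$; after rescaling we may assume $(z_n)$ is $1$-equivalent up to a fixed constant, or more conveniently, that $Z$ is a (closed) subspace with a basis $(z_n)$ and that the original norm restricted to $Z$ is already close to the sup-norm on the coefficients. Using separability of $X$, fix a dense sequence $(x_k)$ in $S_X$ and a sequence $(x_k^*)\subset S_{X^*}$ norming for a dense set, and define a new norm $\tn\cdot\tn$ on $X$ by an expression of the form
\begin{equation*}
  \tn x\tn^2 = \|x\|^2 + \sum_{n} a_n\, f_n(x)^2,
\end{equation*}
or alternatively (closer to \cite{BGLPRZ}) by taking the Minkowski functional of the closed convex hull of $B_X$ together with translates of $B_Z$ along the $z_n$-directions: roughly, let the new unit ball be
\begin{equation*}
  B = \overline{\operatorname{conv}}\Bigl(B_X \cup \bigl\{\, b \pm z_n : b\in B_X,\ n\in\mathbb{N}\,\bigr\}\Bigr)\cap (\text{a bounded set}),
\end{equation*}
suitably scaled so that the new norm is equivalent to the old one. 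The point of such a construction is that each $z_n$ becomes a vector of norm (almost) $1$ which can be added to or subtracted from any element of the ball without increasing the norm, i.e. $\tn x\pm z_n\tn\le 1+\varepsilon_n$ for $x\in S_X$ — but one must be careful to do this only for the $z_n$ with large index so that $\tn\cdot\tn$ still equals a genuine norm and remains equivalent to $\|\cdot\|$.

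The key steps, in order, would be: (1) set up the new norm and verify it is equivalent to the original norm (this uses boundedness of the added set and that $0$ is interior to $B$); (2) show that $(z_n)$, or a subsequence, is weakly null and $\tn z_n\tn\to 1$ in the new norm — weak nullity is inherited since $c_0$'s basis is weakly null and the renorming is an isomorphism; (3) the crucial estimate: for every $x\in S_{(X,\tn\cdot\tn)}$ and every $\varepsilon>0$, $\tn x\pm z_n\tn\le 1+\varepsilon$ for all large $n$. Step (3) is really where the geometry of the construction is used — by density, approximate $x$ by a finite convex combination of the "generators" of $B$, and since each generator can absorb a $\pm z_n$ for $n$ past its index, so can $x$ up to a controlled error. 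Then (4) conclude: given $x_1,\dots,x_N\in S_{(X,\tn\cdot\tn)}$, set $y_n = z_{m_n}$ for a rapidly increasing sequence $(m_n)$; then $\tn x_i\pm y_n\tn\to 1$ for each $i$ and $\tn y_n\tn\to 1$, so $X$ with the new norm is ASQ (indeed WASQ, since the $y_n$ are weakly null). One should double-check that the renormed space still contains $c_0$ and that the ASQ witnesses can be taken simultaneously for all $N$ vectors — but since a single weakly null sequence $(z_n)$ does the job for every $x$, this is automatic.

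The main obstacle I anticipate is step (1) combined with step (3): making the added translates $\{b\pm z_n\}$ large enough that the $z_n$ genuinely have new-norm close to $1$ and can be freely added to unit vectors, while keeping the resulting set bounded (so the new norm is equivalent) and keeping $0$ in the interior. This balancing is exactly what \cite[Lemma~2.3]{BGLPRZ} is engineered to handle, so the honest work is in adapting that lemma's construction and checking that it yields the sharper ASQ conclusion rather than merely the SD2P. A secondary technical point is the passage from "finite convex combination of generators" back to an arbitrary unit vector in step (3), which requires a uniform-continuity / density argument and care with the index bookkeeping (choosing $n$ large relative to the indices appearing in the approximating combination).
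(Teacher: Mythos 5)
Your ``only if'' direction is fine, but the renorming direction has a genuine gap, and it is located exactly where the paper's proof does its real work. The proof in the paper first renorms $X$ to contain $c_0$ isometrically, then invokes Sobczyk's theorem --- this is the one place separability is essential --- to get a projection $P$ of $X$ onto $c_0$ with $\|P\|\le 2$, and defines $\tn x\tn=\max\{\|P(x)\|,\|x-P(x)\|\}$. This makes $c_0$ an M-summand, and ASQ follows from Proposition~\ref{prop:*asq-inftysum} because $c_0$ is ASQ. Your sketch never produces a bounded projection onto the copy of $c_0$ (you use separability only to pick dense sequences), and your substitute construction, the ball $B=\overline{\operatorname{conv}}\bigl(B_X\cup\{b\pm z_n: b\in B_X,\ n\in\mathbb{N}\}\bigr)$, does not do the job. (The quadratic alternative $\tn x\tn^2=\|x\|^2+\sum_n a_nf_n(x)^2$ is a non-starter: it is a strict-convexification-type perturbation and pushes in the opposite direction from ASQ.)

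Concretely, two things go wrong with the convex-hull ball. First, $\tn z_n\tn$ does not tend to $1$: since $z_n\in B_X$, the generator set contains $z_n+z_n=2z_n$, so $\tn z_n\tn\le\frac12$, and renormalizing the witnesses destroys the absorption estimate. Second, your step (3) argument --- ``each generator can absorb a $\pm z_n$ past its index'' --- is false: a generator $b\pm z_m$ translated by $z_n$ gives $b\pm z_m\pm z_n$, which in general lies only in sets of the form $2B_X$-plus-one-translate, not in $(1+\varepsilon)B$. A concrete counterexample: take $X=c_0\oplus_1\mathbb{R}$ with $z_n=(e_n,0)$. The dual ball of $\tn\cdot\tn$ is then $\{(g,s): \max(\|g\|_1,|s|)+\|g\|_\infty\le 1\}$, and for $x=(0,1)$ one checks that any $y=(c,r)$ with $\tn y\tn\ge 1-\varepsilon$ must have $|r|\le\varepsilon$ (test against $(0,\pm1)$), whence choosing $g$ with $\|g\|_1+\|g\|_\infty\le1$, $\langle g,c\rangle\ge 1-2\varepsilon$, and pairing $x+y$ with $(g,1-\|g\|_\infty)$ gives $\tn x+y\tn\ge\frac32-3\varepsilon$ because $\|g\|_\infty\le\frac12$. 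So your renormed space is not even LASQ, let alone ASQ. The missing idea is the splitting $X=c_0\oplus(\ker P)$ and the max-norm across it; merely enlarging the ball by translates cannot force every new-norm unit vector to tolerate $\pm z_n$. Note also that if an argument like yours worked without complementation, it would answer the question, stated right after this theorem in the paper, of whether \emph{every} Banach space containing $c_0$ can be renormed to be ASQ --- a sign that the shortcut is too cheap.
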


\begin{proof}
  As an ASQ-space contains $c_0$, the ``only if'' part is clear.

  For the ``if'' part, first renorm $X$ to contain $c_0$ isometrically
  \cite[Lemma 8.1]{DGZ}. Denote by $\|\cdot\|$ the new norm on $X$.
  By Sobczyk's theorem there exists a projection $P$
  onto $c_0$ with $\|P\| \le 2$.
  Define
  \begin{equation*}
    \tn x \tn = \max\{\|P(x)\|, \|x - P(x)\|\}.
  \end{equation*}
  Then $\tn \cdot\tn $ is a norm on $X$
  which satisfies $\frac{1}{2}\|x\| \le \tn x\tn  \le 3\|x\|$.
  Also $\tn \cdot\tn$ extends the max norm $\|\cdot\|$ on $c_0$
  and we get that $c_0$ is an M-summand in $X$ and hence $X$ is ASQ.
  (See the proof of Proposition~\ref{prop:*asq-inftysum}.)
\end{proof}

It is clear from the proof that all Banach spaces containing
a complemented copy of $c_0$ can be renormed to be ASQ.
We do not know whether or not the same is true for a general Banach space.

\section{Examples}
\label{sec:examples1}
In this section we will provide examples of Banach spaces which are
LASQ, WASQ, and ASQ and spaces which are not.

By considering the  constant 1 function in
$\ell_\infty$, $C(K)$, or $L_\infty[0,1]$ it is
clear that neither of these spaces are LASQ.
It is also obvious that $c_0$ is ASQ.
In fact, we have the following.

\begin{exmp}\label{ex1:c_0}
  Given a sequence of Banach spaces $(X_i)$
  the $c_0$-sum $c_0(X_i)$ is both WASQ and ASQ.

  Let $(x_i)_{i=1}^N \subset S_{c_0(X_i)}$.
  By choosing $j_n$ large enough we may assume
  that $\|x_i(j_n)\|<\frac{1}{n}$ for all $i=1,2,\ldots,N$.
  Choosing $y_{j_n} \in S_{X_{j_n}}$ and defining
  $y_n \in c_0(X_i)$ by
  $y_n(j) = 0$ for all $j$ except $y_n(j_n) = y_{j_n}$
  we see that $\|x_i \pm y_n\| \to 1$ as $n \to \infty$.

  As an extreme example $c_0(L_1[0,1])$
  is ASQ and octahedral (and even has the Daugavet property).
\end{exmp}

\begin{exmp}\label{ex:property-m_infty}
  A separable Banach space $X$ has Kalton and Werner's
  \emph{property $(m_\infty)$} if
  \begin{equation*}
    \limsup_n \|x + y_n\| = \max(\|x\|,\limsup_n\|y_n\|)
  \end{equation*}
  for every $x \in X$ whenever $y_n \to 0$ weakly.
  If $X$ has property $(m_\infty)$ then
  $X$ is ASQ if and only if $X$ lacks the Schur property.

  However, if $X$ does not contain a copy of $\ell_1$,
  then by Theorem~3.5 in \cite{MR1324212}
  $X$ has property $(m_\infty)$ if and only if
  $X$ is almost isometric to a subspace of $c_0$.
  This is much stronger than ASQ,
  see Corollary~\ref{cor:mem-asq} below.
\end{exmp}

In \cite{MR1026841} Gao and Lau
considered the following parameter
\begin{equation*}
  G(X) = \sup \{ \inf \{
  \max\{ \|x+y\|,\|x-y\|
  \}, y \in S_X
  \}, x \in S_Y \}.
\end{equation*}
We see from Proposition~\ref{prop:lasNas-1dim}
that $X$ is LASQ if and only if $G(X) = 1$.
Gao and Lau showed that
$L_1[0,1]$ is LASQ while $L_p[0,1]$, $1<p\le \infty$,
and $\ell_p$, $1 \le p \le \infty$, are not.
In \cite{MR0228997} Whitley introduced
the \emph{thinness index}, $t(X)$, of a Banach space $X$.
It is not difficult to see that Whitley's original
definition of $t(X)$ is equivalent to
\begin{equation*}
  t(X) = \sup_{x_1,x_2,\ldots,x_N \in S_X} \inf_{y \in S_X}
  \max_i \|x_i - y\|.
\end{equation*}
We see that $t(X)=1$ if and only if $X$ is ASQ.

As noted above Gao and Lau have shown that
$L_1[0,1]$ is LASQ, in fact it is WASQ but it is not ASQ.
This is a special case of our next
example which concerns Ces{\`a}ro function spaces.
We will need a bit of Banach lattice notation
(for more see e.g. \cite{LiTz2}).

For an interval $I \subset \mathbb R$ by $L_0 (I)$ we denote the set
of all (equivalence classes of) real valued Lebesgue measurable
(finite almost everywhere) functions on $I$.
A \emph{Banach function lattice} is a Banach space
$E = E(I) \subset L_0 (I)$ such that
if $|f(x)| \le |g(x)|$ a.e. with $f \in L_0 (I)$ and $g \in E$,
then $f \in E$ and $\|f\| \le \|g\|$.
$E$ is \emph{order continuous} if for every $f \in E$
and every $0 \le f_n \le |f |$ a.e. such
that $f_n \downarrow 0$ a.e. we have that $\|f_n\| \downarrow 0$.
$E$ has the \emph{Fatou property}
if for any sequence $(f_n) \subset E $ and any
$f \in L_0 (I)$ such that $0 \le f_n \le f$ a.e., $f_n \uparrow f$
a.e., and $\sup_n \|f_n\| < \infty$ we have that $f \in E$ and $\|f\|
= \lim_n \|f_n\|$.

For $I = (0,l)$ with $0 < l \le \infty$ fixed and a
fixed weight $0 < \omega \in L_0(I)$ we can define a norm
on $L_0(I)$ for $1 \le p < \infty$ by
\begin{equation*}
  \|f\|_{C_{p,\omega}}
  =
  \left(
    \int_I \bigl( \omega(x) \int_{0}^x |f(t)| dt \bigr)^p dx
  \right)^{1/p}
\end{equation*}
The \emph{weighted Ces{\`a}ro function space} on $I$ is
defined by
\begin{equation*}
  C_{p,\omega} = C_{p,\omega}(I) = \{f \in L_0(I) : \|f\|_{p,\omega} < \infty\}.
\end{equation*}
It is known that $C_{p,\omega}$
in the natural pointwise order is a separable
order continuous Banach function lattice with the Fatou-property
(see \cite[Lemma 3.1]{MR2878472}).
Note that the space $C_{1, 1/x}[0,1]$ is isometrically
isomorphic to $L_1[0,1]$ (see e.g. \cite[p.~4293]{MR2431042}).

\begin{exmp}\label{exmp:wasqNEQasq}
  The Ces{\`a}ro function space $C_{p,\omega}$
  is WASQ but not ASQ.

  Kubiak \cite[Lemma~3.3]{Kub13} proved that $C_{p,\omega}$ is WASQ.
  In Proposition~\ref{prop:Cpw-no_c0} below we will
  show that $C_{p,\omega}$ does not contain $c_0$
  so by Lemma~\ref{lem:epsiso2} it is not ASQ.
\end{exmp}

Theorem~\ref{thm-asiswas} and
the example above shows that ASQ is strictly stronger than WASQ.
\begin{quest}
  Is WASQ strictly stronger than LASQ?
\end{quest}

\begin{prop}\label{prop:Cpw-no_c0}
  The space $C_{p,\omega}$ does not contain an isomorphic copy of $c_0$.
\end{prop}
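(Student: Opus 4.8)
The plan is to exploit the lattice structure of $C_{p,\omega}$: it is an order continuous Banach function lattice with the Fatou property, and such spaces contain $c_0$ only in the presence of very specific order-theoretic obstructions. The cleanest route is to show $C_{p,\omega}$ is weakly sequentially complete; since $c_0$ is not weakly sequentially complete and this property passes to closed subspaces, no copy of $c_0$ can embed. For Banach lattices there is a classical theorem (see e.g. \cite{LiTz2}) that an order continuous Banach lattice is weakly sequentially complete if and only if it contains no subspace isomorphic to $c_0$ — which would make the argument circular — so instead I would invoke the more useful form: an order continuous Banach lattice with the Fatou property, equivalently a Banach function lattice that is a KB-space, is weakly sequentially complete. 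Thus the real task reduces to verifying that $C_{p,\omega}$ is a KB-space, i.e. that every norm-bounded increasing sequence of positive elements converges in norm.

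So the key steps, in order, would be: (i) recall that $C_{p,\omega}$ is an order continuous Banach function lattice with the Fatou property, citing \cite[Lemma 3.1]{MR2878472}; (ii) take a norm-bounded increasing sequence $0 \le f_n \uparrow$ in $C_{p,\omega}$, note that by the Fatou property the pointwise (a.e.) limit $f = \sup_n f_n$ lies in $C_{p,\omega}$ with $\|f\| = \lim_n \|f_n\|$; (iii) observe $0 \le f - f_n \downarrow 0$ a.e., so order continuity gives $\|f - f_n\| \to 0$; hence $C_{p,\omega}$ is a KB-space and therefore weakly sequentially complete; (iv) conclude that $C_{p,\omega}$ cannot contain an isomorphic copy of $c_0$, since $c_0$ fails weak sequential completeness and the property is inherited by closed subspaces. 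Combined with Lemma~\ref{lem:epsiso2} this also re-proves that $C_{p,\omega}$ is not ASQ, as promised in Example~\ref{exmp:wasqNEQasq}.

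The main obstacle is not any single hard estimate but rather making sure the order-theoretic facts are quoted in exactly the right form so the argument is not circular and does not silently assume what it is proving; in particular one must use the Fatou property (not merely order continuity) to produce the candidate limit $f$ inside the space, and only then use order continuity to upgrade a.e. convergence to norm convergence. An alternative, more hands-on route would avoid abstract lattice theory entirely: given a normalized block basis $(u_k)$ with disjoint supports $E_k \subset (0,l)$, one estimates $\|\sum_{k=1}^{n} u_k\|_{C_{p,\omega}}$ from below by using that the inner integral $\omega(x)\int_0^x |f(t)|\,dt$ at a point $x$ beyond $E_n$ accumulates the mass of *all* the earlier blocks, so these norms grow unboundedly and no block subsequence can be equivalent to the $c_0$-basis — but this requires a somewhat delicate splitting of the outer integral and careful bookkeeping of the weight, so the lattice-theoretic argument via weak sequential completeness is the shorter and safer choice.
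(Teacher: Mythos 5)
Your overall route is the same as the paper's: reduce the statement to showing that every norm-bounded increasing sequence in $C_{p,\omega}$ converges in norm (the KB-property of the lattice), and then conclude that $c_0$ cannot embed. The paper gets this last implication directly from \cite[Theorem~1.c.4]{LiTz2}, while you pass through weak sequential completeness; that is an equivalent classical formulation, so this part of your plan is sound.

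The gap is in your step (ii). You write that ``by the Fatou property the pointwise (a.e.) limit $f=\sup_n f_n$ lies in $C_{p,\omega}$'', but the Fatou property, as defined, only applies when the a.e.\ limit is an element of $L_0(I)$, i.e.\ finite almost everywhere. Nothing in your argument rules out that a norm-bounded increasing sequence has $\sup_n f_n(x)=\infty$ on a set of positive measure, and in that case there is no candidate $f$ to which either the Fatou property or order continuity can be applied; note that integration over sets of finite measure need not be a bounded functional on $C_{p,\omega}$, so this finiteness is not free. This is exactly where the paper does its real work: assuming $\sup_n f_n=\infty$ on a compact set $A$ of positive measure, it splits $A$ into a left part $A_1$ and a right part $A_2$, both of positive measure, and uses $\int_{A_2}\omega(x)^p\,dx>0$ together with $\int_{A_1}|f_k|\,dt\to\infty$ (monotone convergence) to contradict $\sup_n\|f_n\|<\infty$; this is the only place the concrete form of the Ces\`aro norm and the strict positivity of the weight enter. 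The omission is repairable --- either by the paper's estimate, or by a soft truncation argument using only the Fatou property: if $B=\{x:\sup_n f_n(x)=\infty\}$ had positive measure, then $(f_n-f_1)\wedge M\chi_B\uparrow M\chi_B$ a.e.\ with norms bounded by $\sup_n\|f_n-f_1\|$, so the Fatou property would give $\|\chi_B\|\le \sup_n\|f_n-f_1\|/M$ for every $M>0$, forcing $\chi_B=0$ a.e., a contradiction. But as written, the key step of your proof is assumed rather than proved, and your sketched block-basis alternative is not carried out, so it does not fill the gap either.
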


\begin{proof}
  Let $(f_n)$ be an increasing norm bounded sequence in $C_{p,\omega}$.
  By \cite[Theorem 1.c.4]{LiTz2} it is enough to show that
  $(f_n)$ has a norm limit.
  If $(f_n)$ has a pointwise a.e. limit $f$, then
  it follows from the Fatou property that
  $f$ is in $C_{p,\omega}$. Let $g_n = f -
  f_n$. Then $0 \le g_n \le f-f_1$ and $g_n \downarrow
  0$. By order continuity we get that $\|f - f_n\| =
  \|g_n\| \to 0$ as wanted.

  It only remains to prove that the pointwise limit exists.
  $(f_n)$ increasing means that $f_n(x) \le f_{n+1}(x)$ for a.e. $x$.
  By completeness it is enough to show that
  $(f_n(x))$ is a bounded sequence for a.e. $x$.
  Assume not, i.e. that $\sup_n f_n(x) = \infty$ on a compact $A$
  of positive Lebesgue measure $\lambda(A) > 0$.
  Split $A$ into two parts $A_1$ and $A_2$
  with $\lambda(A_1) > 0$ and $\lambda(A_2) > 0$
  such that $x \le y$ for all $x \in A_1$ and $y \in A_2$.

  We know that
  \begin{equation*}
    K = \int_{A_2} w(x)^p \; dx > 0.
  \end{equation*}
  Let $S = \sup_n \|f_n\| < \infty$.
  Choose $k$ such that $S^p < M^p K$ where
  \begin{equation*}
    M = \int_{A_1} |f_k(t)| \; dt.
  \end{equation*}
  Then
  \begin{align*}
    S^p \ge \|f_k\|^p &=
    \int_I \left( w(x) \int_0^x |f_k(t)| dt \right)^p dx
    \ge
    \int_{A_2} \left( w(x) \int_0^x |f_k(t)| dt \right)^p dx \\
    &\ge
    \int_{A_2} \left( w(x) \int_{A_1} |f_k(t)| dt \right)^p dx
    =
    \int_{A_2} \left( w(x) M \right)^p dx
    = M^p K
  \end{align*}
  and we have a contradiction.
\end{proof}

Let us end this section by providing examples of ASQ, LASQ, and non-LASQ
from the class of Lindenstrauss spaces (i.e. the Banach spaces with duals
isometric to $L_1(\mu)$ for some positive measure $\mu$).
To see that the examples below are Lindenstrauss
cf. e.g. \cite[p.~80]{MR0179580}.
From Theorem~6.1~(14) in \cite{MR0179580} we see
that only Lindenstrauss spaces without extreme points can be LASQ.

Recall the thinness index $t(X)$ from the discussion
following Example~\ref{ex:property-m_infty}.
In \cite[Lemma~8]{MR0228997} Whitley showed
that for any compact Hausdorff we have $t(C(K))=2$,
while $t(C_0(K))=1$ if $K$ locally compact.
Hence $C(K)$-spaces are not ASQ, while $C_0(K)$-space are.
The idea is that every finite set of functions need
a common zero for the space to be ASQ.
Another example of the same idea is the following.

\begin{exmp}
  Let $K$ be a compact Hausdorff space and $\sigma: K \to K$
  involutory homeomorphism (i.e. $\sigma^2 = id_K$)
  with a non-isolated fixed point.
  Then $X = \{f \in C(K) : f(x) = -f(\sigma(x)) \;
  \mbox{for all}\; x \in K\}$ is ASQ.

  If $x_0$ is a fixed point for $\sigma$,
  then $f(x_0) = -f(\sigma(x_0)) = -f(x_0)$ for all $f \in X$.
  Hence given $f_1,f_2,\ldots,f_N \in S_X$ and $\varepsilon > 0$
  there is a neighborhood $U$ of $x_0$ in $K$ where
  $|f_i(x)| < \varepsilon$ for all $x \in U$ and
  $i=1,2,\ldots,N$.
  If we let $g \in S_X$ have support on $U \cup \sigma(U)$
  then $\|f_i \pm g\| \le 1 + \varepsilon$
  and thus $X$ is ASQ.
\end{exmp}

It is also not difficult to find LASQ subspaces
of $C(K)$-spaces.

\begin{exmp}
  Let $X = \{f \in C[0,1] : f(0)=-f(1)\}$.
  Then $X$ is LASQ but not ASQ.

  Since $f \in S_X$ has a zero in $[0,1]$ we can always find
  an interval where $|f(x)|$ is as small as we like.
  Any $g \in S_X$ with support in this interval
  has $\|f \pm g\|$ small so $X$ is LASQ.

  To see that $X$ is not ASQ let $f_1$ be any
  function in $S_X$ which is equal to $1$ on $[0,\frac{1}{2}]$
  and let $f_2(x) = f_1(1-x)$. Then $\max_i\|f_i \pm g\| = 2$
  for any $g \in S_X$.
\end{exmp}

\begin{rem}
  It is clear that in the above LASQ and ASQ examples
  we can construct a bounded sequence $(g_n)$ in the subspace $X$
  of $C(K)$ such that $g_n \to 0$ pointwise in $C(K)$.
  Thus $g_n \to 0$ weakly in $C(K)$,
  see e.g. Theorem~1 in \cite[p.~66]{Die-Seq},
  and hence must also be weakly null in $X$.
  It follows that these examples are also WASQ.
\end{rem}

\section{M-embedded spaces}
\label{sec:examples2}
In this section we will show that all M-embedded spaces are ASQ.
We start by recalling some definitions.

A subspace $Y$ of a Banach space $X$ is an \emph{ideal} in
$X$ if the annihilator $Y^{\perp}$ is the kernel of a norm one
projection on $X^*$.
The subspace $Y$ is called \emph{locally 1-complemented} in $X$ if
for every finite dimensional subspace $E$ of $X$
and every $\varepsilon > 0$ there exists a linear
operator $u: E \to Y$ such that
$u(e) = e$ for all $e \in E \cap Y$ and
$\|u\| \le 1 + \varepsilon$. Fakhoury
\cite[Th\'eor\`eme~2.14]{MR0348457} proved that $Y$ is an
ideal in $X$ precisely when it is locally 1-complemented in $X$.

Following \cite{ALN02} we say that $Y$ is an
\emph{almost isometric ideal (ai-ideal)}  in $X$ if $Y$ is locally
1-complemented in $X$ in such a way that the operator $u: E \to Y$
is an almost isometry, i.e. in addition to the above we have
$(1+\varepsilon)^{-1}\|e\| \le \|u(e)\| \le (1+\varepsilon)\|e\|$
for all $e \in E$.
The fact that every Banach space is an ai-ideal in its bidual is commonly
referred to as \emph{the Principle of Local Reflexivity (PLR)}.

\begin{lem}\label{lem:*asq-ai-ideal}
  If $X$ is (L)ASQ and $Y$ is an ai-ideal in $X$
  then $Y$ is (L)ASQ.
\end{lem}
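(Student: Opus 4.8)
The plan is to work directly from the definitions of (L)ASQ together with the local $1$-complementation property of an ai-ideal. I would first treat the ASQ case, since the LASQ case is the same argument restricted to a single vector. Fix a finite set $y_1,\dots,y_N \in S_Y$ and $\varepsilon > 0$. These are also norm-one vectors in $X$, so since $X$ is ASQ we may use Proposition~\ref{prop:lasNas-1dim} to find $x \in S_X$ with $\|y_i - x\| \le 1 + \varepsilon/2$ for all $i$. The problem is that $x$ need not lie in $Y$. To fix this I would apply the ai-ideal property to the finite dimensional subspace $E = \linspan\{y_1,\dots,y_N,x\}$: there is a linear operator $u \colon E \to Y$ with $u(e) = e$ for every $e \in E \cap Y$, with $\|u\| \le 1+\delta$, and with $(1+\delta)^{-1}\|e\| \le \|u(e)\| \le (1+\delta)\|e\|$ for all $e \in E$, where $\delta>0$ is chosen small. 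Since each $y_i \in E \cap Y$, we have $u(y_i) = y_i$, so
\begin{equation*}
  \|y_i - u(x)\| = \|u(y_i - x)\| \le (1+\delta)\|y_i - x\| \le (1+\delta)(1+\varepsilon/2).
\end{equation*}

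Next I would set $y = u(x)/\|u(x)\|$, which lies in $S_Y$. Because $\|x\| = 1$ and $u$ is an almost isometry, $\|u(x)\|$ is within a factor $1+\delta$ of $1$, so replacing $u(x)$ by its normalization changes the estimate only by a controlled amount: $\|y_i - y\| \le \|y_i - u(x)\| + |\,\|u(x)\| - 1\,| \le (1+\delta)(1+\varepsilon/2) + \delta$. Choosing $\delta$ small enough (depending only on $\varepsilon$) makes the right-hand side at most $1 + \varepsilon$. Thus for every $\varepsilon>0$ we have produced $y \in S_Y$ with $\|y_i - y\| \le 1+\varepsilon$ for all $i$, which is exactly the ASQ criterion of Proposition~\ref{prop:lasNas-1dim}. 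Hence $Y$ is ASQ.

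For the LASQ case, the identical argument applies with $N = 1$: given $y_1 \in S_Y$ and $\varepsilon > 0$, use that $X$ is LASQ to find $x \in S_X$ with $\|y_1 \pm x\| \le 1+\varepsilon/2$, apply the ai-ideal operator on $E = \linspan\{y_1, x\}$ (noting $u(y_1) = y_1$), normalize $u(x)$, and conclude $\|y_1 \pm y\| \le 1+\varepsilon$ for some $y \in S_Y$. I would also remark that one must check both signs $\pm$ here, but this is automatic since the estimate $\|y_1 - x\| \le 1+\varepsilon/2$ and $\|y_1 + x\| \le 1+\varepsilon/2$ are both available from the LASQ hypothesis and $u$ is linear, so it commutes with the sign.

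The only genuinely delicate point is bookkeeping the constants: one must fix $\delta$ in terms of $\varepsilon$ before invoking the ai-ideal property, and verify that the two sources of error — the distortion of $u$ on $E$ and the renormalization of $u(x)$ — together stay below $\varepsilon$. This is routine once the order of quantifiers is set correctly (pick $\varepsilon$, then pick an intermediate tolerance for the ASQ/LASQ step in $X$, then pick $\delta$ for the ideal). I would make the choice explicit, e.g. require $(1+\delta)(1+\varepsilon/2) + \delta \le 1+\varepsilon$, which holds for all sufficiently small $\delta>0$, and otherwise present the computation tersely.
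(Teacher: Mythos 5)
Your argument is correct and is essentially the paper's own proof: use ASQ in $X$ (via Proposition~\ref{prop:lasNas-1dim}) to get $x$, apply the ai-ideal almost isometry $u$ on $E=\linspan\{y_1,\dots,y_N,x\}$, which fixes the $y_i$, then normalize $u(x)$ and estimate by the triangle inequality; the only difference is that you keep a separate tolerance $\delta$ where the paper simply uses $\varepsilon/4$ throughout, and you write out the LASQ case which the paper leaves to the reader.
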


\begin{proof}
  We only show the ASQ case.
  Let $y_1,y_2,\ldots,y_N \in S_Y$ and $0 < \varepsilon < 1$.
  Find $x \in S_X$ such that $\|y_i - x\| \le 1 + \varepsilon/4$.
  Now, choose an $\varepsilon/4$-isometry $u: E \to Y$ such that
  $u$ is the identity on
  $E \cap Y$ where $E = \linspan\{(y_j)_{j=1}^N, x\}$.
  Define $z = u(x)/\|u(x)\|$. Then $z \in S_Y$
  and $\|u(x) - z\| = |\|u(x)\|-1| \le \varepsilon/4$ and
  \begin{align*}
    \|y_i - z\| &\le \|u(y_i - x)\| + \|u(x)-z\| \le
    (1+\frac{\varepsilon}{4})(1+\frac{\varepsilon}{4}) +
    \frac{\varepsilon}{4}
    \le 1+\varepsilon.
  \end{align*}
  Thus $Y$ is ASQ by Proposition~\ref{prop:lasNas-1dim}.
\end{proof}

If $Y$ is an ideal in $X$ with an
ideal projection $P$ on $X^*$ which for every $x^* \in X^*$ satisfies
$\|x^*\| = \|Px^*\| + \|x^* - Px^*\|$, then $Y$ is said to be an
\emph{M-ideal} in $X$ ($P$ is called the M-ideal projection on
$X^*$). If $X$ is an M-ideal in $X^{**}$, then $X$ is said to
be \emph{M-embedded}.
For M-ideals we often get ASQ for free.

\begin{thm}\label{thm:mid-sc-sq}
  Let $Y$ be a proper subspace of a non-reflexive Banach space $X$.
  If $Y$ is both an M-ideal and an ai-ideal in $X$, then $Y$ is
  ASQ.
\end{thm}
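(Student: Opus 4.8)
The plan is to combine the $M$-ideal structure with the ai-ideal structure to produce, for any finite set $y_1,\dots,y_N\in S_Y$, a unit vector in $Y$ that is "almost square" with all of them. First I would work in $X$ and exploit the local characterization of $M$-ideals: since $Y$ is an $M$-ideal in $X$ and $X$ is non-reflexive while $Y\neq X$, there is a net (or, after a suitable reduction, a vector) $x\in S_X$ which is nearly "$M$-orthogonal" to the finite dimensional subspace $E=\linspan\{y_1,\dots,y_N\}$, meaning $\|e+\lambda x\|\approx\max(\|e\|,|\lambda|)$ for all $e\in E$ and scalars $\lambda$. The standard tool here is the $3$-ball property / the local formulation: given $e\in B_E$ and $\varepsilon>0$ there exists $x\in B_X$ with $\|\pm e + x\|\le 1+\varepsilon$ and $\|x\|$ close to $1$; a net-compactness or finite-net argument over $S_E$ (exactly as in the proof of Theorem~\ref{thm:as-findim}) upgrades this to a single $x\in S_X$ working simultaneously for all $y_i$, so that $\|y_i\pm x\|\le 1+\varepsilon/2$.

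Next I would transfer this vector from $X$ back into $Y$ using that $Y$ is an ai-ideal: apply the ai-ideal operator $u\colon \linspan\{E,x\}\to Y$ which is an $\varepsilon$-isometry and the identity on $E\cap Y = E$. Then $z:=u(x)/\|u(x)\|\in S_Y$ satisfies $\|u(x)-z\|$ small and
\[
  \|y_i - z\| \le \|u(y_i-x)\| + \|u(x)-z\| \le (1+\tfrac{\varepsilon}{2})(1+\tfrac{\varepsilon}{2}) + \tfrac{\varepsilon}{2} \le 1+2\varepsilon,
\]
exactly as in the proof of Lemma~\ref{lem:*asq-ai-ideal}. By Proposition~\ref{prop:lasNas-1dim} (dropping the plus-minus sign is legitimate since there are finitely many $y_i$) this shows $Y$ is ASQ. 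In fact, the cleanest packaging is: the $M$-ideal hypothesis shows $X$ itself is ASQ (or at least that the required vectors exist in $X$), and then Lemma~\ref{lem:*asq-ai-ideal} applied to the ai-ideal $Y\subset X$ finishes the proof immediately.

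The main obstacle is the first step — extracting from "$Y$ is an $M$-ideal in the non-reflexive $X$ with $Y$ proper" the existence of the almost-square vector $x\in S_X$. This is where non-reflexivity is essential: one must rule out that $E$ (or more to the point, the relevant norming functionals) already "sees" all of $X$. Concretely I expect to invoke that an $M$-ideal $Y\subsetneq X$ in a non-reflexive space is infinite-dimensional and "large" enough — e.g. via the $3$-ball property together with the fact that $B_Y$ is not norm-compact — to find, for each $\varepsilon>0$ and each $e\in B_E$, an element $x\in B_X$ with $\|\pm e+x\|\le 1+\varepsilon$ and $\|x\|\ge 1-\varepsilon$; the non-reflexivity prevents the perturbing vectors from being forced to $0$. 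Once this local statement is in hand, the finite-net argument and the ai-ideal transfer are routine, following the templates already established in Theorem~\ref{thm:as-findim} and Lemma~\ref{lem:*asq-ai-ideal}.
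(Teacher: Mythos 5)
Your second half is sound: transferring an almost-square companion vector from $X$ into $Y$ via the ai-ideal operator, with the normalization $z=u(x)/\|u(x)\|$, is exactly the argument of Lemma~\ref{lem:*asq-ai-ideal}, and Proposition~\ref{prop:lasNas-1dim} then finishes. The genuine gap is the step you yourself flag as the main obstacle: you never actually produce, for given $y_1,\dots,y_N\in S_Y$, a vector $x\in S_X$ with $\|y_i\pm x\|\le 1+\varepsilon$, and the mechanism you offer for it is the wrong one. Neither non-reflexivity of $X$ nor non-compactness of $B_Y$ is what creates such a vector, and your parenthetical claim that ``the $M$-ideal hypothesis shows $X$ itself is ASQ'' is false: $Y=\{f\in C[0,1]:f(0)=0\}$ is a proper M-ideal in the non-reflexive space $C[0,1]$, yet $C[0,1]$ is not even LASQ. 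What your route actually needs is a starting vector far from $Y$: since $Y$ is a proper closed subspace, the Riesz lemma gives $x_0\in S_X$ with $d(x_0,Y)>1-\varepsilon$; applying the $n$-ball property of the M-ideal $Y$ to $x_0$ and the finite family $\{0,\pm y_1,\dots,\pm y_N\}\subset B_Y$ yields $y\in Y$ with $\|x_0-y\|\le 1+\varepsilon$ and $\|(x_0-y)\pm y_i\|\le 1+\varepsilon$, while $\|x_0-y\|\ge d(x_0,Y)>1-\varepsilon$, so normalizing $x_0-y$ gives the companion vector. Without anchoring at a vector whose distance to $Y$ is nearly $1$, the $3$-ball property gives nothing (for $x_0\in Y$ the perturbed vector can simply be $0$), which is why your ``non-reflexivity prevents collapse'' heuristic does not substitute for an argument. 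Filled in this way, your plan becomes a correct, bidual-free alternative proof, and it shows that the hypothesis doing the work is properness of $Y$ (non-reflexivity only keeps the hypotheses from being vacuous, since an ASQ space contains copies of $c_0$).

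The paper obtains the companion vector by a different route, entirely in the bidual: since $Y$ is an M-ideal, $X^*=PX^*\oplus_1 Y^{\perp}$, hence $X^{**}=(PX^*)^{\perp}\oplus_{\infty}Y^{\perp\perp}$, and properness of $Y$ guarantees $(PX^*)^{\perp}\neq\{0\}$ (it is isometric to $(Y^{\perp})^{*}$). Any $z\in S_{(PX^*)^{\perp}}$ is then exactly M-orthogonal to the $y_i$, that is $\|y_i-z\|=\max(\|y_i\|,\|z\|)=1$; the principle of local reflexivity (i.e.\ $X$ is an ai-ideal in $X^{**}$) brings $E=\linspan\{y_1,\dots,y_N,z\}$ down into $X$ by a $\delta$-isometry fixing the $y_i$, and the ai-ideal property of $Y$ in $X$ then pushes it into $Y$ -- so the paper performs your transfer step twice, once from $X^{**}$ to $X$ and once from $X$ to $Y$, instead of invoking any ball property. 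Both arguments are legitimate; yours (once repaired with the Riesz lemma plus the $n$-ball property) stays local in $X$, while the paper trades the ball characterization for the exact $\ell_\infty$-decomposition upstairs.
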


\begin{proof}
  Let $\varepsilon > 0$ and choose $0 < \delta < 1$ with
  $(1+\delta)^2(1 + 3\delta(1+\delta)^2) < 1 + \varepsilon$.
  Write $X^{**} = (PX^*)^{\perp} \oplus_\infty Y^{\perp\perp}$.
  This is possible as $Y$ is an M-ideal in $X$ and
  thus $X^* = P(X^*) \oplus_1 Y^{\perp}$
  ($P$ denotes here the M-ideal projection on $X^*$).
  Let $y_1,y_2,\ldots,y_N \in S_Y$ and $z \in S_{(PX^*)^{\perp}}$,
  and put $E = \linspan\{(y_i)_{i=1}^N, z\} \subset X^{**}$.
  Use the PLR to find a $\delta$-isometry $v: E \to X$
  which is the identity on $E \cap X$.
  Further, put $F = v(E) \subset X$ and use that $Y$ is an
  ai-ideal in $X$ to find a $\delta$-isometry $u:F \to Y$ which
  is the identity on $F\cap Y$.
  Now with $y = uv(z)/\|uv(z)\| \in S_Y$ we use $uv(y_i) = y_i$ to get
  \begin{align*}
    \|y_i - y\| &= \|y_i - \frac{uv(z)}{\|uv(z)\|}\|  \le (1 +
    \delta)^2\|y_i - \frac{z}{\|uv(z)\|}\| \\
                & \le (1 + \delta)^2(\|y_i - z\| +
                \|z - \frac{z}{\|uv(z)\|}\|) < 1 + \varepsilon
  \end{align*}
  since
  \begin{align*}
    \| z - \frac{z}{\|uv(z)\|} \|
    &= \frac{1}{\|uv(z)\|}|1 - \|uv(z)\|| \\
    &\le (1+\delta)^{2}
    ( | 1 - \|v(z)\| | + | \|v(z)\| - \|uv(z)\| | ) \\
    &\le (1+\delta)^2 (\delta + \delta(1+\delta)) \le 3\delta(1+\delta)^2.
  \end{align*}
  Using Proposition \ref{prop:lasNas-1dim} we are done.
\end{proof}

Since every Banach space is an ai-ideal in its bidual
by the PLR we immediately
have the following corollary.

\begin{cor}\label{cor:mem-asq}
  Non-reflexive M-embedded spaces are ASQ.
\end{cor}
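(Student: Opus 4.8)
The plan is to read this off directly from Theorem~\ref{thm:mid-sc-sq}, applied with the ambient space taken to be $X^{**}$ and the distinguished subspace taken to be $X$ itself (viewed inside $X^{**}$ via the canonical embedding). First I would verify the three hypotheses of that theorem in this setting. Since $X$ is M-embedded, $X$ is by definition an M-ideal in $X^{**}$, so the M-ideal hypothesis holds. By the Principle of Local Reflexivity, recalled in the previous section, every Banach space is an ai-ideal in its bidual; in particular $X$ is an ai-ideal in $X^{**}$, so the ai-ideal hypothesis holds. It remains to check that $X$ is a \emph{proper} subspace of $X^{**}$ and that $X^{**}$ is \emph{non-reflexive}: both follow from the assumption that $X$ is non-reflexive, the first because non-reflexivity means precisely that the canonical embedding $X \hookrightarrow X^{**}$ is not onto, and the second because a closed subspace of a reflexive space is reflexive, so if $X^{**}$ were reflexive then $X$ would be too.

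With all three hypotheses in place, Theorem~\ref{thm:mid-sc-sq} yields that $X$ is ASQ, which is the assertion of the corollary. There is no genuine obstacle here; the corollary is simply the specialization $Y = X$, ambient space $X^{**}$ of the theorem, and the only points worth spelling out are the two elementary observations above — that a non-reflexive space sits as a proper subspace of its bidual, and that this bidual is again non-reflexive. One could add, as motivation, that this instantly covers the classical M-embedded spaces such as $c_0$, $c_0(\Gamma)$, the spaces of compact operators $K(H)$ on a Hilbert space, and more generally any non-reflexive M-embedded space, recovering and unifying the ad hoc verifications of ASQ for such examples given earlier.
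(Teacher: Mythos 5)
Your proposal is correct and follows exactly the paper's route: the corollary is obtained by applying Theorem~\ref{thm:mid-sc-sq} with $Y=X$ and ambient space $X^{**}$, the M-ideal hypothesis being the definition of M-embeddedness and the ai-ideal hypothesis being the PLR. Your two extra verifications (that non-reflexivity makes $X$ a proper subspace of $X^{**}$ and that $X^{**}$ is then also non-reflexive) are exactly the elementary points the paper leaves implicit, so nothing is missing.
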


The following spaces are examples of M-embedded spaces: $c_0(\Gamma)$
(for any set $\Gamma$), $\mathcal K(H)$
of compact operators on a Hilbert space $H$, and $C(\mathbb T)/A$
where $\mathbb T$ denotes the unit circle and $A$ the disk algebra.
(For more examples see Chapter~III in \cite{HWW}.)
From Example \ref{ex1:c_0} the space $c_0(\ell_1)$ is ASQ. However,
this space contains a copy of $\ell_1$ and therefore can not be M-embedded
(\cite[Theorems~3.4.a and 3.5]{MR735420}). Thus the class of ASQ
spaces properly contains the class of M-embedded spaces.

\section{Stability}
\label{sec:stability}
We start this section by recalling the notion of an absolute
sum of a family of Banach spaces. Our goal is to show that  LASQ and
WASQ spaces are stable under absolute sums (see Propositions
\ref{prop:stab-was} and \ref{prop: stab loct and lasq}).
It turns out that locally and
weakly octahedral Banach spaces are stable by forming absolute sums too
(see Propositions \ref{prop: stab loct and lasq} and \ref{prop:stab-oct}).

\begin{defn}
  Let $I$ be a non-empty set and let
  $E$ be a $\mathbb{R}$-linear subspace of $\mathbb{R}^I$.
  An \emph{absolute norm} on $E$ is a complete norm
  $\|\cdot\|_E$ satisfying
  \begin{enumerate}
  \item\label{item:1}
    Given $(a_i)_{i \in I},(b_i)_{i \in I} \in \mathbb{R}^I$ with
    $|b_i| \le |a_i|$ for every $i \in I$,
    if $(a_i)_{i \in I} \in E$, then $(b_i)_{i \in I} \in E$ with
    $\|(b_i)_{i \in I}\|_E \le \|(a_i)_{i \in I}\|_E$.
  \item\label{item:2}
    For every $i \in I$, the function $e_i : I \to \mathbb{R}$
    given by $e_i(j) = \delta_{ij}$ for $j \in I$, belongs
    to $E$ and $\|e_i\|_E = 1$.
  \end{enumerate}
\end{defn}

Let $E\subset \mathbb{R}^I$ with an absolute norm.
Then $\ell_1(I) \subseteq E \subseteq \ell_\infty(I)$, and
$E$ can be viewed as a K{\"o}the function
space (and hence a Banach lattice) on the space
$(I,\mathcal{P}(I),\mu)$, where $\mathcal{P}(I)$ is the power
set of $I$ and $\mu$ is the counting measure on $I$.
It is known that $E$ is order continuous if and only if $E$ does not
contain an isomorphic copy of $\ell_\infty$ if and only if
$\linspan\{e_i:i \in I\}$ is dense in $E$.

The K{\"o}the dual $E'$ of a Banach space $E \subset \mathbb{R}^I$
with absolute norm is the linear subspace
of $\mathbb{R}^I$ defined by
\begin{equation*}
  E':= \left\{(a_i)_{i \in I} \in \mathbb R^I: \sup \sum_{i \in I} |a_ib_i| < \infty,
  {(b_i)_{i \in I} \in B_E} \right\}.
\end{equation*}

It is not hard to see that
\begin{equation*}
 \|(a_i)_{i \in I}\|_{E'}:= \sup \left\{ \sum_{i \in I} |a_ib_i|
 : {(b_i)_{i \in I} \in B_E} \right\}
\end{equation*}
defines an absolute norm on $E'$.
Every $(b_i)_{i \in I} \in E'$ defines a functional on $E$ by
\begin{equation*}
  (a_i)_{i \in I} \to
  \sum_{i \in I} b_i a_i.
\end{equation*}
This induces an embedding $E' \to E^*$ which is easily seen to be  linear and isometric. If $\linspan\{e_i:i \in I\}$ is dense in $E$ then the embedding $E' \to E^*$ is surjective, and so $E'$ and $E^{\ast}$ can be identified.

Now, if $(X_i)_{i \in I}$ is a family of Banach spaces
we put
\begin{equation*}
  [\oplus_{i \in I} X_i]_E := \{ (x_i)_{i \in I} \in \Pi_{i \in
    I} X_i: (\|x_i\|)_{i \in I} \in E \}.
\end{equation*}
It is clear that this defines a subspace of the product space
$[\oplus_{i \in I} X_i]_E$ which becomes a Banach space when given the norm
\begin{equation*}
  \|(x_i)_{i \in I}\|:= \|(\|x_i\|)_{i \in I}\|_E, \qquad
  (x_i)_{i \in I} \in [\oplus_{i \in I} X_i]_E.
\end{equation*}
This Banach space is said to be the \emph{absolute sum of the
family $(X_i)_{i \in I}$ with respect to $E$}. Every $(x^*_i)_{i \in I} \in [\oplus_{i \in I} X^*_i]_{E'}$ defines a functional on $[\oplus_{i \in I} X_i]_E$ by
\begin{equation*}
  (x_i)_{i \in I} \to
  \sum_{i \in I} x^*_i(x_i).
\end{equation*}
This embedding is isometric and is onto if
$\linspan\{e_i:i \in I\}$ is dense in $E$.

Putting $I = \mathbb N$ and $E = \ell_p(I)$ it is clear that for $1
\le p \le \infty$ the $\ell_p$
sum ($c_0$ sum if $p=\infty$) of a family of Banach spaces $(X_i)_{i
  \in I}$ is an absolute sum with
respect to $E$ (for which ${[\oplus_{i \in I}
X^*_i]_{E'}} = {[\oplus_{i \in I}
X_i]_E}^{\ast}$ as $\linspan\{e_i:i \in I\}$ is dense in $\ell_p(I)$ in this
case). In \cite[Propositions~3.4 and 3.7]{HLP} it was proved that
locally and weakly octahedral spaces are stable by taking $\ell_p$
sums of two Banach spaces. This can also be obtained from
Propositions \ref{prop: stab loct and lasq} and \ref{prop:stab-oct} below.
First we show that WASQ is stable by taking absolute sums.

\begin{prop} \label{prop:stab-was}
  Let $E$ be a subspace of $\mathbb R^{I}$ with an absolute norm such that
  $\linspan\{e_i:i \in I\}$ is dense in $E$.
  If $(X_i)_{i \in I}$ is a family of Banach spaces which
  are WASQ, then $X=(\oplus_{i \in I} X_i)_E$ is WASQ.
\end{prop}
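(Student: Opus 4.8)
The plan is to reduce the WASQ property of the absolute sum $X = (\oplus_{i \in I} X_i)_E$ to two ingredients: the WASQ property of the individual summands $X_i$, and the order continuity of $E$ (which holds by hypothesis, since $\linspan\{e_i : i \in I\}$ being dense in $E$ is equivalent to order continuity). Given $x^1, \dots, x^N \in S_X$, write $x^k = (x^k_i)_{i \in I}$ with $(\|x^k_i\|)_i \in S_E$. First I would use order continuity to truncate: for $\varepsilon > 0$ there is a finite set $J \subset I$ so that the tails $\|(\|x^k_i\|)_{i \notin J}\|_E < \varepsilon$ for every $k = 1, \dots, N$ simultaneously (take the union of finitely many tail-small sets). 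So up to $\varepsilon$ the vectors $x^k$ are supported on the finite set $J$.

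Next I would build the perturbing sequence coordinate by coordinate on $J$. For each $i \in J$ and each $k$, since $X_i$ is WASQ there is a weakly null sequence $(y^i_n)_n \subset B_{X_i}$ with $\|y^i_n\| \to 1$ and $\|x^k_i \pm y^i_n\| \to 1$ as $n \to \infty$ — but here one must be a little careful, because the WASQ sequence in $X_i$ depends a priori on the vector $x^k_i$, and we have $N$ of them in each coordinate. The cleanest fix is to appeal to the characterization that in a WASQ space one may handle finitely many unit vectors at once in a single coordinate by a diagonal argument, or simply to first pass to unit vectors: normalize each nonzero $x^k_i$, get a finite family in $S_{X_i}$, and use that WASQ applied successively (or the analogue of Proposition~\ref{prop:lasNas-1dim} for WASQ) yields one weakly null sequence $(y^i_n)$ with $\|\frac{x^k_i}{\|x^k_i\|} \pm y^i_n\| \to 1$ for all relevant $k$; scaling back, $\|x^k_i \pm \|x^k_i\| y^i_n\| \to \|x^k_i\|$. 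Then I would assemble $y_n = (y_n(i))_{i \in I}$ by setting $y_n(i)$ to be a suitably scaled copy of $y^i_n$ for $i \in J$ and $0$ otherwise, choosing the scalings $(\lambda_i)_{i \in J}$ so that $(\lambda_i)_{i \in J}$, viewed in $E$, has norm close to $1$ (e.g. normalize $(\|x^k_i\|)_{i \in J}$ appropriately, or pick any unit vector of $E$ supported on $J$). Since each coordinate sequence is weakly null in $X_i$ and the sum is over the finite set $J$, $y_n \to 0$ weakly in $X$ (using that the dual acts coordinatewise and finitely many coordinates are involved).

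The remaining task is the norm estimates. Using property~\ref{item:1} of an absolute norm (monotonicity) together with the coordinatewise convergences $\|x^k_i \pm y_n(i)\| \to \|x^k_i\|$ for $i \in J$ and $\|y_n(i)\| = 0$ for $i \notin J$, one gets $\|x^k \pm y_n\|$ controlled above and below by the $E$-norm of a vector that converges (coordinatewise, on the finite set $J$) to something within $\varepsilon$ of $(\|x^k_i\|)_i$, hence $\limsup_n \|x^k \pm y_n\| \le 1 + O(\varepsilon)$; the lower bound $\|x^k \pm y_n\| \ge \|x^k\| - \|y_n\|$-type inequalities combined with monotonicity give $\liminf_n \|x^k \pm y_n\| \ge 1 - O(\varepsilon)$. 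Likewise $\|y_n\| \to \|(\lambda_i)_{i \in J}\|_E$, which was arranged to be $1$ (or within $\varepsilon$ of it). Finally, a standard diagonal argument over a sequence $\varepsilon_m \downarrow 0$ produces a single sequence witnessing WASQ exactly.

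The main obstacle I anticipate is the bookkeeping in the coordinate $i \in J$: one needs a genuinely uniform perturbing sequence there that works for all the finitely many vectors $x^1_i, \dots, x^N_i$ simultaneously while staying weakly null and of norm tending to $1$. This is where the hypothesis that each $X_i$ is WASQ (not merely LASQ) is essential, and it may require proving or invoking a finite-set version of WASQ in a single space — presumably the analogue of Proposition~\ref{prop:lasNas-1dim}, obtained by iterating the one-vector definition and extracting a diagonal subsequence that remains weakly null. Everything else — the truncation via order continuity, the weak nullity of the assembled sequence, and the norm sandwiching via the absolute norm axioms — is routine.
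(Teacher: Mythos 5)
Your proposal starts from finitely many vectors $x^1,\ldots,x^N\in S_X$, but WASQ is a one-vector property: you only need to handle a single $x\in S_X$. This matters, because the step you yourself flag as the main obstacle --- producing, in a fixed coordinate $i$, one weakly null sequence that works simultaneously for the finitely many vectors $x^1_i,\ldots,x^N_i$ by ``iterating the one-vector definition and extracting a diagonal subsequence'' --- is not available. Such a finite-set version of WASQ is equivalent to ASQ (it trivially implies ASQ, and Theorem~\ref{thm-asiswas} gives the converse), while WASQ is strictly weaker than ASQ: $L_1[0,1]$ and the Ces\`aro spaces of Example~\ref{exmp:wasqNEQasq} are WASQ but not ASQ. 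So as written the argument leans on a false lemma. A second, smaller point: the scaling of the coordinate sequences cannot be ``any unit vector of $E$ supported on $J$''. With an arbitrary $(\lambda_i)$ one only gets $\|x_i\pm\lambda_i y^n_i\|\approx\max(\|x_i\|,\lambda_i)$; in an $\ell_1$-sum with $\|x_1\|=\|x_2\|=\tfrac12$ and $\lambda=(1,0)$ this gives $\|x\pm y_n\|\to\tfrac32$. The scaling must follow the moduli of $x$ itself, $\lambda_i=\|x_i\|$ (your first suggested option), which is exactly what makes the monotonicity axiom of the absolute norm usable.

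Once you take $N=1$ and the scaling $\lambda_i=\|x_i\|$, both problems disappear and your argument becomes the paper's proof: use density of $\linspan\{e_i:i\in I\}$ to reduce to $x$ with finite support $J$; for each $i\in J$ apply WASQ of $X_i$ to $x_i/\|x_i\|$ to get a weakly null sequence $(y^n_i)\subset S_{X_i}$ with $\bigl\|\tfrac{x_i}{\|x_i\|}\pm y^n_i\bigr\|\le 1+n^{-1}$; set $y_n=(\|x_i\|y^n_i)_{i\in I}$ (zero off $J$), so that $\|y_n\|_E=1$, monotonicity gives $\|x\pm y_n\|_E\le\bigl\|\bigl((1+n^{-1})\|x_i\|\bigr)_{i\in I}\bigr\|_E\le 1+n^{-1}$ (the matching lower bound is automatic from the triangle inequality), and $x^*(y_n)=\sum_{i\in J}x^*_i(\|x_i\|y^n_i)\to 0$ because $J$ is finite. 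Your truncation via order continuity together with a diagonal argument over $\varepsilon_m\downarrow 0$ is the right way to justify the reduction to finitely supported $x$, which the paper states as ``we may assume $J$ is finite''.
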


\begin{proof}
  Let $x=(x_i)_{i \in I}\in S_X$.
  Our task is to find a weakly-null sequence $(y_n)
  \subset S_X$ such that
  \begin{equation*}
    \|x\pm y_n\|_E \to 1.
  \end{equation*}
  Since $\linspan\{e_i: i \in I\}$ is dense
  in $E$ we may assume that $J = \{i \in I : x_i \neq 0\}$
  is finite.
  By assumption, for every $i \in J$, there exist weakly-null sequences
  $(y^n_i) \subset S_{X_i}$
  such that
  \begin{equation*}
    \big\|\frac{x_i}{\|x_i\|}\pm y^n_i\big\| \le 1 + n^{-1}.
  \end{equation*}
  If we let $y_n=(\|x_i\|y^n_i)_{i\in I}$, with $\|x_i\|y_i^n = 0$
  for $i \not\in J$, then $\|y_n\|_E=1$ and
  \begin{equation*}
    \|x \pm y_n\|_E
    = \big\|\big(\|x_i \pm \|x_i\|y^n_i \|\big)_{i\in I}\big\|_E
    \le \big\|\big((1+n^{-1})\|x_i\|\big)_{i \in I}\big\|_E
    \le 1 + n^{-1}.
  \end{equation*}
  Let $x^* = (x_i^*)_{i\in I} \in X^*$.
  Since $x^*(y_n) = \sum_{i \in J} x_i^* (y_i^n)$
  and $J$ is finite we get $x^*(y_n) \to 0$ and
  thus $(y_n)$ is weakly-null.
\end{proof}

In the proof above we assumed that our unit vector had finite
support in order to get a weakly-null sequence.
For absolute sums of LASQ and locally octahedral spaces
this is not necessary and the first part of the proof above
gives the following proposition.

\begin{prop}\label{prop: stab loct and lasq}
  Let $I$ be a set, $E$ a subspace of $\mathbb R^I$ with an absolute
  norm, and $(X_i)_{i \in I}$ a family of Banach spaces which are
  locally octahedral (resp. LASQ).
  Then their absolute sum $X=(\oplus_{i \in I} X_i)_E$
  is locally octahedral (resp. LASQ).
\end{prop}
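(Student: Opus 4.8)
The plan is to prove both cases (locally octahedral and LASQ) simultaneously by reducing, as was done in the WASQ proof, to a single coordinate in which a ``witness'' lives. Fix $x=(x_i)_{i\in I}\in S_X$ and $\varepsilon>0$. Since $\|(\|x_i\|)_{i\in I}\|_E=1$, there is at least one index $i_0$ with $x_{i_0}\neq 0$; set $\lambda=\|x_{i_0}\|>0$. The idea is to perturb only the $i_0$-coordinate: by the LASQ (resp.\ locally octahedral) hypothesis on $X_{i_0}$, applied to the unit vector $x_{i_0}/\lambda\in S_{X_{i_0}}$, choose $y_{i_0}\in S_{X_{i_0}}$ with $\|x_{i_0}/\lambda \pm y_{i_0}\|\le 1+\delta$ (resp.\ $\|x_{i_0}/\lambda\pm y_{i_0}\|\ge 2-\delta$), where $\delta>0$ will be chosen small depending on $\varepsilon$. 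Then define $y=(y_i)_{i\in I}\in X$ by $y_{i_0}=$ this vector and $y_i=0$ for $i\neq i_0$, so that $\|y\|_E=\|e_{i_0}\|_E=1$, i.e.\ $y\in S_X$.

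For the LASQ case, I would estimate $\|x\pm y\|_E$. Since $x\pm y$ agrees with $x$ in every coordinate except $i_0$, and in the $i_0$-coordinate we have $\|x_{i_0}\pm y_{i_0}\|=\lambda\|x_{i_0}/\lambda\pm y_{i_0}\|\le \lambda(1+\delta)$, the vector of norms $(\|(x\pm y)_i\|)_{i\in I}$ is coordinatewise dominated by the vector that equals $(1+\delta)\|x_i\|$ at $i_0$ and $\|x_i\|$ elsewhere. By property~\ref{item:1} of an absolute norm (monotonicity) and the triangle inequality in $E$, this gives $\|x\pm y\|_E\le \|(\|x_i\|)_{i\in I}\|_E + \delta\|x_{i_0}\|\,\|e_{i_0}\|_E = 1+\delta\lambda\le 1+\delta$. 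Choosing $\delta=\varepsilon$ and invoking Proposition~\ref{prop:lasNas-1dim} completes the LASQ case. For the locally octahedral case, pick a norm-one functional on $X_{i_0}$ witnessing $\|x_{i_0}/\lambda\pm y_{i_0}\|\ge 2-\delta$, extend it to $X$ by composing with the natural projection onto $X_{i_0}$ (this is a norm-one functional on $X$ since $\|e_{i_0}\|_{E'}=1$, using that $E'$ is an absolute norm and $\ell_1(I)\subseteq E'$); evaluating at $x\pm y$ and comparing with $x_{i_0}\pm y_{i_0}$ in that single coordinate yields $\|x\pm y\|_E\ge 2-\delta$, so $X$ is locally octahedral.

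The step I expect to need the most care is the monotonicity/triangle-inequality manipulation in $E$: one must check that replacing a single coordinate of the norm-vector by a slightly larger value changes the $E$-norm by at most the corresponding amount, which follows by writing $(\|(x\pm y)_i\|)_i \le (\|x_i\|)_i + \delta\lambda\, e_{i_0}$ coordinatewise and applying property~\ref{item:1} together with $\|e_{i_0}\|_E=1$ from property~\ref{item:2}. Everything else is a direct transcription of the WASQ argument with the weak-nullity bookkeeping stripped out, exactly as the paragraph preceding the proposition suggests.
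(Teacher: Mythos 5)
There is a genuine gap: the single-coordinate perturbation does not work for a general absolute sum, and the computation that makes it look like it does rests on a false identity. You write $\|x_{i_0}\pm y_{i_0}\|=\lambda\|x_{i_0}/\lambda\pm y_{i_0}\|$, but the right-hand side equals $\|x_{i_0}\pm\lambda y_{i_0}\|$, not $\|x_{i_0}\pm y_{i_0}\|$; since your $y_{i_0}$ must have norm one for $y$ to lie in $S_X$, the $i_0$-th entry of the norm vector of $x\pm y$ is not dominated by $\lambda(1+\delta)$ --- the triangle inequality already forces it to be at least $1-\lambda$, which exceeds $\lambda(1+\delta)$ when $\lambda$ is small. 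Rescaling $y_{i_0}$ by $\lambda$ repairs the coordinate estimate but gives $\|y\|_E=\lambda$, so $y\notin S_X$. The failure is not just bookkeeping: take $E=\ell_1(\{1,2\})$ and $x=(\tfrac12 x_1,\tfrac12 x_2)$ with $x_1\in S_{X_1}$, $x_2\in S_{X_2}$. For any $y=(y_1,0)$ with $\|y_1\|=1$ one has $\|x+y\|_1+\|x-y\|_1\ge 2\|y_1\|+\|x_2\|=3$, so $\max_{\pm}\|x\pm y\|_1\ge\tfrac32$, and no witness supported in one coordinate can verify LASQ. The octahedral half breaks in the same way: in an $\ell_2$-sum with $x=(\tfrac1{\sqrt2}x_1,\tfrac1{\sqrt2}x_2)$ every singly supported $y\in S_X$ satisfies $\|x\pm y\|_2\le\bigl((1+\tfrac1{\sqrt2})^2+\tfrac12\bigr)^{1/2}<2$, and your functional $f\circ P_{i_0}$ can produce at most $\|x_{i_0}\|+1=\lambda+1$, never $2-\delta$; here you have implicitly replaced $x_{i_0}$ by $x_{i_0}/\lambda$.

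The paper's proof (the first part of the proof of Proposition~\ref{prop:stab-was}, with the finite-support reduction and the weak-nullity bookkeeping discarded) perturbs \emph{every} non-zero coordinate: for each $i$ with $x_i\neq 0$ choose $y_i\in S_{X_i}$ with $\bigl\|\tfrac{x_i}{\|x_i\|}\pm y_i\bigr\|\le 1+\delta$ (resp. $\ge 2-\delta$) and set $y=(\|x_i\|y_i)_{i\in I}$. Then the norm vector of $y$ is exactly $(\|x_i\|)_{i\in I}$, so $\|y\|_E=1$, and coordinatewise $\|x_i\pm\|x_i\|y_i\|\le(1+\delta)\|x_i\|$ (resp. $\ge(2-\delta)\|x_i\|$), whence the monotonicity property of the absolute norm gives $\|x\pm y\|_E\le 1+\delta$ (resp. $\ge 2-\delta$) directly, with no triangle inequality in $E$ and no dual functional needed. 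This distribution of the witness over all coordinates, scaled by $\|x_i\|$, is the idea your proposal is missing.
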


For absolute sums of weakly octahedral spaces we have
to work a bit harder.

\begin{prop} \label{prop:stab-oct}
  Let $I$ be a set, $E$ a subspace of $\mathbb R^I$ with an absolute
  norm such that $\linspan\{e_i:i \in I\}$ is dense in $E$, and
  $(X_i)_{i \in I}$ a family of Banach spaces which are weakly
  octahedral. Then their absolute sum $X=(\oplus_{i \in I} X_i)_E$ is
  weakly octahedral.
\end{prop}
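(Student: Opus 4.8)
The plan is to reduce the statement to a single application of weak octahedrality inside each summand and then to glue the resulting unit vectors together using a coefficient vector whose norming functional on $E'$ is dictated by the norming functional on $X$.

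Write $X=(\oplus_{i\in I}X_i)_E$ and fix $x^1,\dots,x^N\in S_X$, a functional $\Phi\in B_{X^*}$, and $\varepsilon\in(0,1)$; put $\varepsilon':=\varepsilon/2$ and fix $\delta>0$ with $(1-\varepsilon')(1-\delta)\ge 1-\varepsilon$ (possible since $\varepsilon'<\varepsilon$). Since $\linspan\{e_i:i\in I\}$ is dense in $E$ we identify $X^*$ with $(\oplus_{i\in I}X_i^*)_{E'}$ and write $\Phi=(x_i^*)_{i\in I}$, so that $(\|x_i^*\|)_{i\in I}\in B_{E'}$. Put $\rho:=\|(\|x_i^*\|)_i\|_{E'}\le 1$, let $b^*:=\rho^{-1}(\|x_i^*\|)_i$ if $\rho>0$ and let $b^*$ be any positive norm-one member of $E'$ otherwise, and set $d_i^j:=|x_i^*(x_i^j)|/\|x_i^*\|$ when $x_i^*\neq0$ and $d_i^j:=0$ otherwise. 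Since $d_i^j\le\|x_i^j\|$ we have $(d_i^j)_i\in E$, and the point of this choice is that for each $j$
\[
 \langle b^*,(d_i^j)_i\rangle=\rho^{-1}\sum_i|x_i^*(x_i^j)|\ \ge\ \rho^{-1}|\Phi(x^j)|\ \ge\ |\Phi(x^j)|
\]
(using $\rho\le1$; trivial if $\rho=0$, since then $\Phi=0$), while $b^*$ is a nonnegative norm-one element of $E'$.

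Next, using density of the finitely supported vectors in $E$, choose a finitely supported $c=(c_i)_i\in S_E$ with $c\ge0$ and $\langle b^*,c\rangle>1-\delta$. For each $i\in\supp c$ apply weak octahedrality of $X_i$ to the finite set $\{x_i^j/\|x_i^j\|:x_i^j\neq0\}$ and to the functional $x_i^*/\|x_i^*\|\in B_{X_i^*}$, with tolerance $\varepsilon'$ (with the obvious conventions when $x_i^*=0$ or the set is empty), and rescale the parameter to obtain $y_i\in S_{X_i}$ with $\|x_i^j+sy_i\|\ge(1-\varepsilon')(d_i^j+s)$ for all $s>0$ and all $j$ (the case $x_i^j=0$ being trivial). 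Put $y:=(c_iy_i)_{i\in I}$; as $y$ is supported in $\supp c$ we get $\|y\|_X=\|c\|_E=1$, so $y\in S_X$. Now fix $j$ and $t>0$. For every $i$ one has $\|x_i^j+tc_iy_i\|\ge(1-\varepsilon')(d_i^j+tc_i)$ — by the coordinate estimate (with $s=tc_i>0$) when $i\in\supp c$, and because $\|x_i^j\|\ge(1-\varepsilon')d_i^j$ when $c_i=0$. Hence, since $b^*\ge0$ with $\|b^*\|_{E'}\le1$,
\[
 \|x^j+ty\|_X=\bigl\|(\|x_i^j+tc_iy_i\|)_i\bigr\|_E\ \ge\ \bigl\langle b^*,(\|x_i^j+tc_iy_i\|)_i\bigr\rangle\ \ge\ (1-\varepsilon')\bigl(\langle b^*,(d_i^j)_i\rangle+t\langle b^*,c\rangle\bigr),
\]
and the last quantity is $\ge(1-\varepsilon')(|\Phi(x^j)|+(1-\delta)t)\ge(1-\varepsilon)(|\Phi(x^j)|+t)$ by the choices of $\varepsilon'$ and $\delta$. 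This shows that $X$ is weakly octahedral.

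The main obstacle, and the reason this case is harder than the analogous statement for LASQ and locally octahedral spaces (Proposition~\ref{prop: stab loct and lasq}), is the choice of $c$: a naive choice yields only $\|x^j+ty\|_X\ge(1-\varepsilon')\|(d_i^j+tc_i)_i\|_E$, and since the $E$-norm of a sum of two disjointly supported nonnegative vectors can be as small as the larger of their two norms, this leaves a factor-$2$ gap in the target estimate, where one needs $\|x^j+ty\|_X\ge(1-\varepsilon)(|\Phi(x^j)|+t)$. Forcing $c$ to be norming for the particular functional $b^*=\rho^{-1}(\|x_i^*\|)_i\in B_{E'}$ is precisely what makes this single dual vector extract $|\Phi(x^j)|$ from the $(d_i^j)$-part and $t$ from the $(tc_i)$-part simultaneously; everything else is routine manipulation based on the density hypothesis on $E$.
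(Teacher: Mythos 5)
Your proof is correct and follows essentially the same route as the paper's: apply weak octahedrality coordinatewise with the normalized functionals $x_i^*/\|x_i^*\|$, then glue the resulting $y_i$ with a nonnegative coefficient vector that nearly norms $(\|x_i^*\|)_{i\in I}$ in $E'$, and bound $\|x^j+ty\|_E$ from below by pairing with that dual element. The differences (normalizing $b^*$ instead of carrying the factor $\|x^*\|$, insisting on finite support of $c$, and the explicit $\Phi=0$ case) are cosmetic.
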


\begin{proof}
  Let $\varepsilon>0$, let $x_1=(x^1_i)_{i\in I}, x_2=(x^2_i)_{i\in I}, \ldots,
  x_N=(x^N_i)_{i\in I}\in S_X$, and $x^*=(x^*_i)_{i\in I}\in
  B_{X^*}$. Our task here is to find $y\in S_X$ such that
  for all $t>0$ and $k=1,2,\ldots,N$
  \begin{equation*}
    \|x_k+ty\|_E\geq (1-\varepsilon)(|\sum_{i\in I}x^\ast_i(x^k_i)|+t).
  \end{equation*}
  Let $z^\ast_i=\frac{x^\ast_i}{\|x^\ast_i\|}$ if $x^\ast_i\neq 0$ and
  $z^\ast_i=0$ otherwise. By the weak octahedrality of $X_i$, for every
  $i\in I$, there exists a $y_i \in S_{X_i}$ such that for all $t>0$ and
  $k=1,2,\ldots,N$
  \begin{equation}\label{WOH}
    \big\|\frac{x^k_i}{\|x^k_i\|}+ty_i\big\| \geq
    (1-\varepsilon/2)\big(\frac{|z^\ast_i(x^k_i)|}{\|x^k_i\|}+t\big).
  \end{equation}
  If $x^k_i=0$ for some $i\in I$, then take $y_i$ to be any element from
  $S_{X_i}$. Now (\ref{WOH}) implies that
  for all $t>0$ and $k=1,2,\ldots,N$
  \begin{equation*}
    \|x^k_i+ty_i\| \geq
    (1-\varepsilon/2)(\frac{|x^\ast_i(x^k_i)|}{\|x^\ast_i\|}+t).
  \end{equation*}
  Since $\|x^*\| = \|(\|x^*_i\|)_{i\in I}\|_{E^*}\leq 1$, there is a list of
  reals $(\alpha_i)_{i\in I}\subset \mathbb{R}$ such that
  $\|(\alpha_i)_{i\in I}\|_E=\|(|\alpha_i|)_{i\in I}\|_E=1$ and
  \begin{equation*}
    \sum_{i \in I} \|x^\ast_i\| \cdot |\alpha_i|
    >
    \|x^*\| ( 1-\frac{\varepsilon/2}{1-\varepsilon/2} ).
  \end{equation*}
  We take $y=(|\alpha_i| y_i)_{i\in I}\in S_X$ to get
  \begin{align*}
    \|x^*\| \|x_k+ty\|_E
    &\geq \sum_{i\in I}\|x^\ast_i\|\cdot\|x^k_i+|\alpha_i|ty_i \| \\
    &\geq (1-\varepsilon/2)
    \sum_{i\in I}\|x^\ast_i\|
    \left(\frac{|x^\ast_i(x^k_i)|}{\|x^\ast_i\|}+|\alpha_i|t\right) \\
    &\geq (1-\varepsilon/2)
    \left( |\sum_{i\in I}x^\ast_i(x^k_i)|
      + \|x^*\| (1-\frac{\varepsilon/2}{1-\varepsilon/2}) t\right) \\
    &\geq
    (1-\frac{\varepsilon/2}{1-\varepsilon/2})(1-\varepsilon/2) \|x^*\|
    \left(
      |\sum_{i \in I} x^\ast_i(x^k_i) | + t
    \right) \\
    &= \|x^*\|(1-\varepsilon)\left(|\sum_{i\in I}x^\ast_i(x^k_i)|+t \right).
  \end{align*}
  Dividing both sides by $\|x^*\|$ we get the desired inequality.
\end{proof}

We have seen that for a sequence of non-trivial Banach spaces $(X_i)$ the space $c_0(X_i)$ is always ASQ. Similarly $\ell_1(X_i)$ is always octahedral.

Note that $X\oplus_p Y$, $1<p<\infty$, can never be ASQ,
because it fails the SD2P
(see \cite[Theorem~3.2]{ABGLP13} or \cite[Theorem~1]{HL}).
But even though the SD2P property is stable by forming $\ell_1$ sums (see
\cite[Theorem~2.7 (iii)]{ALN01}), it turns out that the $\ell_1$ sum of
Banach spaces can never be ASQ.

\begin{lem}
Let $X$ and $Y$ be nontrivial Banach spaces. Then $X\oplus_1 Y$ is never ASQ.
\end{lem}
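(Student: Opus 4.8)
The plan is to argue by contradiction, exploiting the fact that in an $\ell_1$-sum the norm of a sum of two "orthogonal" pieces adds up, whereas ASQ forces norms to behave like a max. Suppose $X \oplus_1 Y$ is ASQ. Pick unit vectors $x_0 \in S_X$ and $y_0 \in S_Y$ and form the two unit vectors $u = (x_0, 0)$ and $v = (0, y_0)$ in $S_{X \oplus_1 Y}$. By ASQ (in the form of Proposition~\ref{prop:lasNas-1dim}), given $\varepsilon > 0$ there is $w = (a,b) \in S_{X\oplus_1 Y}$, i.e. $\|a\| + \|b\| = 1$, with $\|u - w\| \le 1 + \varepsilon$ and $\|v - w\| \le 1 + \varepsilon$. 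Writing these out,
\begin{equation*}
  \|x_0 - a\| + \|b\| \le 1 + \varepsilon, \qquad \|a\| + \|y_0 - b\| \le 1 + \varepsilon.
\end{equation*}

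The key step is to add these two inequalities and use the reverse triangle inequality to produce a contradiction. Adding gives
\begin{equation*}
  \|x_0 - a\| + \|y_0 - b\| + \|a\| + \|b\| \le 2 + 2\varepsilon,
\end{equation*}
and since $\|a\| + \|b\| = 1$ this says $\|x_0 - a\| + \|y_0 - b\| \le 1 + 2\varepsilon$. On the other hand, $\|x_0 - a\| \ge \|x_0\| - \|a\| = 1 - \|a\|$ and $\|y_0 - b\| \ge 1 - \|b\|$, so the left side is at least $2 - (\|a\| + \|b\|) = 1$. That only recovers $1 \le 1 + 2\varepsilon$, which is not yet a contradiction, so I would sharpen the bookkeeping: from $\|x_0 - a\| + \|b\| \le 1+\varepsilon$ and $\|x_0 - a\| \ge 1 - \|a\| = \|b\|$ we get $2\|b\| \le 1 + \varepsilon$, hence $\|b\| \le \tfrac{1+\varepsilon}{2}$; symmetrically $\|a\| \le \tfrac{1+\varepsilon}{2}$. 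Then $\|x_0 - a\| \le 1 + \varepsilon - \|b\| = 1 + \varepsilon - (1 - \|a\|) = \varepsilon + \|a\| \le \varepsilon + \tfrac{1+\varepsilon}{2}$, and similarly for $\|y_0 - b\|$. I would instead route through $w + w' $ where $w'$ witnesses the opposite sign, or — cleaner — test ASQ against the \emph{three} vectors $u$, $v$, and $u$ again with plus-minus to pin down that $a$ and $b$ must each be small while $\|a\|+\|b\|=1$ forces one of them near $\tfrac12$, and then re-feed that estimate.

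Actually the cleanest route, which I would adopt, is: apply ASQ to the single pair $\{u, v\}$ but use the $\pm$ version together with $\|u+v\| = 2$. Since $u, v \in S_{X\oplus_1 Y}$ with $\|u + v\| = \|x_0\| + \|y_0\| = 2$, and ASQ gives $w \in S_{X\oplus_1 Y}$ with $\|u \pm w\| \le 1 + \varepsilon$ and $\|v \pm w\| \le 1 + \varepsilon$, we compute
\begin{equation*}
  2 = \|u + v\| = \|(u - w) + (v + w)\| \le \|u - w\| + \|v + w\| \le 2 + 2\varepsilon,
\end{equation*}
which again is too weak; the genuine contradiction comes from comparing $\|u - w\| + \|u + w\| \ge 2\|u\| = 2$ against the $\ell_1$-split: $\|u - w\| + \|u + w\| = \|x_0 - a\| + \|x_0 + a\| + 2\|b\| \ge 2\|x_0\| + 2\|b\| = 2 + 2\|b\|$, so $2 + 2\|b\| \le 2 + 2\varepsilon$, giving $\|b\| \le \varepsilon$; symmetrically $\|a\| \le \varepsilon$. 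But then $1 = \|a\| + \|b\| \le 2\varepsilon$, impossible for $\varepsilon < \tfrac12$. The main obstacle is simply choosing the right combination of test vectors and $\pm$ signs so that the $\ell_1$-additivity of the "off-diagonal" coordinate is forced to vanish; once that is set up the computation is immediate, and no appeal to the finite-set strength of ASQ beyond a single pair with signs is needed.
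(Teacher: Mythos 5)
Your final argument is correct and complete: applying the ASQ condition with signs (e.g.\ apply Proposition~\ref{prop:lasNas-1dim} to the four vectors $\pm u$, $\pm v$, or use the sequence form of the definition and pass to limits along $\|w_n\|\to 1$) to $u=(x_0,0)$ and $v=(0,y_0)$, the identity $\|u-w\|+\|u+w\|=\|x_0-a\|+\|x_0+a\|+2\|b\|\ge 2+2\|b\|$ forces $\|b\|\le\varepsilon$, symmetrically $\|a\|\le\varepsilon$, contradicting $\|a\|+\|b\|=1$ once $\varepsilon<\tfrac12$. This is the same mechanism as the paper's proof --- average over the $\pm$ signs so that the triangle inequality absorbs one coordinate while the $\ell_1$-additivity keeps the other, and conclude that the norm of the witness $w$ is strictly less than $1$ --- but the test vectors differ: the paper tests against the mixed vectors $z_1=(-\tfrac13 x,\tfrac23 y)$ and $z_2=(\tfrac23 x,-\tfrac13 y)$ with the fixed tolerance $\tfrac19$, obtaining $\|w_x\|,\|w_y\|\le\tfrac49$ and hence $\|w\|\le\tfrac89<1$, whereas your pure coordinate vectors $(x_0,0)$ and $(0,y_0)$ make the bookkeeping cleaner and yield the sharper bounds $\|a\|,\|b\|\le\varepsilon$; both choices use only a two-element set (plus signs), so neither needs the full finite-set strength of ASQ. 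The two preliminary attempts in your write-up, which you correctly identify as too weak, should simply be deleted; only the last computation is needed.
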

\begin{proof}
Let $Z=X\oplus_1 Y$, $x\in S_X$, and $y\in S_Y$. Consider norm $1$
elements $z_1=(-\frac{1}{3}x, \frac{2}{3}y)$ and $z_2=(\frac{2}{3}x,
-\frac{1}{3}y)$. Assume for contradiction that there is a
$w=(w_x, w_y)\in S_Z$ with $\|z_i\pm w\|\leq 1+\frac{1}{9}$. Then
\begin{align*}
\|w_x\|+\|\frac{2}{3}y\| &\leq \frac{1}{2}\Big(\|-\frac{1}{3}x+
w_x\|+\|\frac{2}{3}y+ w_y\|+\|\frac{1}{3}x+ w_x\|+\|\frac{2}{3}y-
w_y\|\Big)\\
& \leq \max\{\|z_1+ w\|,\|z_1- w\| \}\leq 1+\frac{1}{9}
\end{align*}
so that $\|w_x\|\leq \frac{1}{3}+\frac{1}{9}$.
Similarly $\|w_y\|\leq \frac{1}{3}+\frac{1}{9}$.
We get $\|w\| < 1$ which is a contradiction.
\end{proof}

\begin{cor}\label{p-sum of LASQ spaces implies X is LASQ}
  Let $X$ and $Y$ be nontrivial Banach spaces and $1\leq p<\infty$.
  \begin{enumerate}
  \item $X\oplus_p Y$ is LASQ if and only if $X$ and $Y$ are LASQ.
  \item $X\oplus_p Y$ is WASQ if and only if $X$ and $Y$ are WASQ.
  \end{enumerate}
\end{cor}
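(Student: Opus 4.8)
The plan is to prove the two equivalences by treating the forward and backward directions separately, using the previously established stability results and the lemma that $\ell_1$-sums are never ASQ. I begin with the ``if'' directions. For (i), if $X$ and $Y$ are LASQ, then by Proposition~\ref{prop: stab loct and lasq} (applied with $I=\{1,2\}$ and $E=\ell_p^2$, which carries an absolute norm for $1\le p<\infty$) the absolute sum $X\oplus_p Y$ is LASQ. Similarly, for (ii), if $X$ and $Y$ are WASQ, then for $1<p<\infty$ the space $\ell_p^2$ has $\linspan\{e_1,e_2\}$ dense (it is finite-dimensional), so Proposition~\ref{prop:stab-was} gives that $X\oplus_p Y$ is WASQ; for $p=1$ the space $\ell_1^2$ likewise has dense span of the unit vectors, so the same proposition applies. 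Thus both ``if'' directions follow immediately from the stability results already proved.

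For the ``only if'' directions I would argue contrapositively: I want to show that if $X\oplus_p Y$ is LASQ (resp.\ WASQ) then each summand is LASQ (resp.\ WASQ). Here the natural move is to observe that $X$ embeds isometrically into $X\oplus_p Y$ as a norm-one complemented subspace via $x\mapsto(x,0)$, with norm-one projection $(u,v)\mapsto(u,0)$. However, LASQ and WASQ do not pass freely to arbitrary $1$-complemented subspaces, so instead I would argue directly: given $x\in S_X$ and $\varepsilon>0$, pick any $y_0\in S_Y$ and apply the LASQ (resp.\ WASQ) property of $X\oplus_p Y$ to the unit vector $(x,0)$ to obtain $(w_n,v_n)\in B_{X\oplus_p Y}$ with $\|(x,0)\pm(w_n,v_n)\|_p\to1$ and $\|(w_n,v_n)\|_p\to1$ (and weakly null in the WASQ case). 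The key point to extract is that the $Y$-component $v_n$ must tend to zero in norm: from $\|(x\pm w_n,\pm v_n)\|_p^p=\|x\pm w_n\|^p+\|v_n\|^p\le(1+o(1))^p$ and the triangle inequality $\|x\pm w_n\|\ge\tfrac12(\|x+w_n\|+\|x-w_n\|)\ge\ldots$ one forces $\|v_n\|\to0$; then $\|x\pm w_n\|\to1$ and $\|w_n\|\to1$, and in the WASQ case $w_n\to0$ weakly in $X$ since $v_n\to0$ implies $(w_n,v_n)\to0$ weakly forces $w_n\to0$ weakly. Hence $X$ is LASQ (resp.\ WASQ), and symmetrically so is $Y$.

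Wait --- this last argument only uses the $\ell_p$ structure and would suggest $\ell_1$-sums of LASQ spaces are LASQ, which is consistent; the subtlety the corollary is really recording is that for $p=1$ one cannot upgrade LASQ/WASQ of the sum to ASQ of the sum, by the preceding Lemma. So the ``only if'' direction is genuinely the routine extraction above, and the content of the corollary is simply that, unlike ASQ, the properties LASQ and WASQ survive $\ell_1$-sums in both directions. The main obstacle I anticipate is purely bookkeeping: verifying carefully that $\|v_n\|\to0$ (equivalently that the mass of the approximating sequence concentrates in the $X$-coordinate), and in the WASQ case checking that weak nullity of $(w_n,v_n)$ in $X\oplus_p Y$ descends to weak nullity of $w_n$ in $X$ --- the latter because $X^*$ sits inside $(X\oplus_p Y)^*$ via $f\mapsto(f,0)$, so $f(w_n)=(f,0)(w_n,v_n)\to0$ for every $f\in X^*$. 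Once these two checks are in place, both equivalences follow by combining them with the stability Propositions~\ref{prop:stab-was} and~\ref{prop: stab loct and lasq}.
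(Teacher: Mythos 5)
Your proposal is correct and essentially matches the paper's proof: the ``if'' directions are exactly Propositions~\ref{prop: stab loct and lasq} and~\ref{prop:stab-was}, and the ``only if'' directions are obtained, as in the paper, by applying the property of $X\oplus_p Y$ to $(x,0)$ and extracting the $X$-coordinate (the paper handles the bookkeeping via uniform continuity of $t\mapsto t^{1/p}$ and normalizing $u/\|u\|$ rather than first proving $\|v_n\|\to 0$, but this is the same idea). One small correction to your sketch: the inequality forcing $\|v_n\|\to 0$ should read $\max_{\pm}\|x\pm w_n\|\ge \tfrac12\bigl(\|x+w_n\|+\|x-w_n\|\bigr)\ge\|x\|=1$, i.e.\ it holds for the maximum over the two signs, not for each sign separately.
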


\begin{proof}
(i).
One direction is Proposition~\ref{prop: stab loct and lasq}.
Let us show that $X$ is LASQ whenever $X\oplus_p Y$ is.

The function $f(x) = x^{1/p}$ is uniformly continuous on $[0,2]$
so given $\varepsilon > 0$ there exists $\delta > 0$
such that $|f(x)-f(y)|\le\varepsilon$ whenever $|x-y| \le \delta$.
Also the function $g(x) = x^p$ is continuous at $x=1$
so there exists $\eta > 0$
such that $|g(1)-g(y)|\le \delta$ whenever $|1-y| \le \eta$.

Assume $x \in S_X$.
Since $X \oplus_p Y$ is LASQ
there exists $(u,v) \in S_{X \oplus_p Y}$ such that
\begin{equation*}
  \|(x,0) \pm (u,v)\|_p
  = \left(\|x \pm u\|^p + \|v\|^p\right)^{1/p} \le 1+ \eta.
\end{equation*}
(Note that $u \neq 0$, else $\|(x,v)\| = 2^{1/p} > 1 +\varepsilon$.)
We have (since $t \mapsto t^p$ is increasing)
\begin{equation*}
  \|x\pm u\|^p + \|v\|^p \le (1+\eta)^p \le 1^p + \delta
  = 1 + \delta
\end{equation*}
hence
\begin{equation*}
  \|x \pm u\|^p \le
  1 + \delta - \|v\|^p
  = \|u\|^p + \|v\|^p - \|v\|^p + \delta
  = \|u\|^p + \delta.
\end{equation*}
Taking $p$-th roots we get
\begin{equation*}
  \|x \pm u\| \le \|u\| + \varepsilon
\end{equation*}
since $|\|u\|^p+\delta-\|u\|^p| = \delta$.
Let $z = u/\|u\|$. Then
\begin{equation*}
  \|x \pm z\| \le \|x \pm u\|
  + \|z - u\| \le \|u\| + \varepsilon + 1-\|u\|
  = 1+\varepsilon.
\end{equation*}

(ii). The proof is similar to (i). Indeed, for $\varepsilon_n=\frac{1}{n}$ find the sequence $\eta_n$ and observe that if a sequence $(u_n,v_n)$ converges weakly to $(0,0)$ in $X\oplus_p Y$, then $u_n$ converges weakly to $0$ in $X$.
\end{proof}

We end this section by showing that for finite $\ell_\infty$ sums we only need to assume that only one of the spaces is LASQ, WASQ or ASQ.

\begin{prop}\label{prop:*asq-inftysum}
Let $X$ and $Y$ be nontrivial Banach spaces.
\begin{enumerate}
\item $X\oplus_\infty Y$ is LASQ if and only if either $X$ or $Y$ is LASQ.

\item $X\oplus_\infty Y$ is WASQ if and only if either $X$ or $Y$ is WASQ.

\item $X\oplus_\infty Y$ is ASQ if and only if either $X$ or $Y$ is ASQ.

\end{enumerate}
\end{prop}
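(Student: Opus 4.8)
**Proof proposal for Proposition~\ref{prop:*asq-inftysum}.**

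The plan is to prove all three parts simultaneously, since the arguments are nearly identical; only the bookkeeping (sequences vs.\ finite nets, and the weak-null requirement) differs. For the ``if'' direction, suppose without loss of generality that $X$ is LASQ (resp.\ WASQ, ASQ). In the LASQ case, given $(x_0,y_0)\in S_{X\oplus_\infty Y}$, I would produce the test vectors as follows. If $\|x_0\|=1$, use that $X$ is LASQ to find $z_n\in S_X$ with $\|x_0\pm z_n\|\to1$ and $\|z_n\|\to 1$, and set $w_n=(z_n,0)$; then $\|(x_0,y_0)\pm w_n\|_\infty=\max\{\|x_0\pm z_n\|,\|y_0\|\}\to 1$ since $\|y_0\|\le 1$, and $\|w_n\|_\infty\to1$. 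If $\|x_0\|<1$, then $\|y_0\|=1$; here I cannot play inside $X$ directly at $x_0$, but I can instead pick \emph{any} $u\in S_X$ (for ASQ/WASQ with finitely many vectors one applies the property to the normalizations of all the $X$-coordinates together) and exploit the $\ell_\infty$ structure: take $w_n=(u,0)$ only if this keeps $\|x_0\pm u\|$ small, which it need not. The cleaner route is: since $X$ is LASQ it contains almost isometric copies of $\ell_\infty^2$ (Corollary~\ref{cor:las-2dimellinfty}), but more to the point one should normalize and use that LASQ at the point $x_0/\|x_0\|$ (if $x_0\neq 0$) gives $z_n$ with $\|x_0/\|x_0\| \pm z_n\|\to 1$, whence $\|x_0\pm\|x_0\|z_n\|\to\|x_0\|\le 1$; and if $x_0=0$ just take $w_n=(u_n,0)$ for $u_n$ from LASQ applied to any fixed point, giving $\|(0,y_0)\pm(u_n,0)\|_\infty=\max\{1,\|y_0\|\}=1$ exactly.

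For the ASQ case the ``if'' direction goes: given $(x_i,y_i)_{i=1}^N\subset S_{X\oplus_\infty Y}$, let $J=\{i:x_i\neq 0\}$, apply Theorem~\ref{thm:as-findim} (or directly the ASQ property) to the finite set $\{x_i/\|x_i\|:i\in J\}\subset S_X$ to get $z\in S_X$ with $\|x_i/\|x_i\|-z\|\le 1+\varepsilon$ for $i\in J$, set $w=(z,0)$, and check $\|(x_i,y_i)-w\|_\infty=\max\{\|x_i-z\|,\|y_i\|\}$; since $\|x_i-z\|\le\|x_i\|\|x_i/\|x_i\|-z\|+(1-\|x_i\|)\le 1+\varepsilon$ for $i\in J$ and $\|x_i-z\|=\|z\|=1$ for $i\notin J$, and $\|y_i\|\le1$, we get $\le 1+\varepsilon$, so $X\oplus_\infty Y$ is ASQ by Proposition~\ref{prop:lasNas-1dim}. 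The WASQ case adds that the $z_n$ can be taken weakly null in $X$, and then $(z_n,0)$ is weakly null in $X\oplus_\infty Y$ because functionals on the sum are pairs $(x^*,y^*)$ and $(x^*,y^*)(z_n,0)=x^*(z_n)\to 0$.

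For the ``only if'' direction, suppose $X\oplus_\infty Y$ is LASQ (resp.\ WASQ, ASQ) but \emph{neither} $X$ nor $Y$ is. I would argue by contradiction using Proposition~\ref{prop:lasNas-1dim}: failure of LASQ for $X$ means there is $x_0\in S_X$ and $\varepsilon>0$ with $\|x_0\pm z\|>1+\varepsilon$ for every $z\in S_X$, i.e.\ $\max\{\|x_0+z\|,\|x_0-z\|\}>1+\varepsilon$; similarly pick $y_0\in S_Y$ and $\varepsilon'>0$ for $Y$. Now consider the point $(x_0,y_0)\in S_{X\oplus_\infty Y}$ and an arbitrary $(z,w)\in S_{X\oplus_\infty Y}$, so $\max\{\|z\|,\|w\|\}=1$. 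If $\|z\|=1$ then $\max_\pm\|x_0\pm z\|>1+\varepsilon$ already if $z/\|z\|=z\in S_X$; but one must handle $\|z\|<1$ too — here I would use that the ASQ/LASQ test vectors may be taken in $S_X$ (norm exactly one), which Proposition~\ref{prop:lasNas-1dim} grants, so in fact one only needs to rule out $z,w$ of norm one. Since $\max\{\|z\|,\|w\|\}=1$, at least one coordinate, say $z$, has norm one, and then $\max_\pm\|(x_0,y_0)\pm(z,w)\|_\infty\ge\max_\pm\|x_0\pm z\|>1+\min(\varepsilon,\varepsilon')$, contradicting LASQ of the sum; the ASQ version uses the \emph{pair} $\{(x_0,y_0),(x_0',y_0')\}$ or rather just the single pair together with the observation that whichever coordinate of the test vector has norm one is blocked. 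The main obstacle I anticipate is precisely this case analysis on which coordinate of the normalized test vector $(z,w)$ carries the full norm: one must make sure that in every configuration at least one of the ``bad'' points $x_0$ or $y_0$ is genuinely obstructed, which is why it is essential to invoke Proposition~\ref{prop:lasNas-1dim} to normalize the test vectors to the sphere. Once that reduction is in place the rest is routine triangle-inequality bookkeeping.
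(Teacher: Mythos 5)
Your ``if'' direction is essentially the paper's own argument: for the coordinates with $x_i\neq 0$ you normalize, apply the property of $X$ to the normalized vectors, take the test vector $(z,0)$, and control $\|x_i-z\|$ by the convex-combination estimate $\|x_i - z\| \le \|x_i\|\,\|x_i/\|x_i\|-z\| + (1-\|x_i\|)$; this is exactly what the paper does. One slip: in your LASQ paragraph the scaled vector $\|x_0\|z_n$ has norm $\|x_0\|$, which may be $<1$, so it is not an admissible LASQ test vector when $0<\|x_0\|<1$; the correct move in that subcase is the same estimate you use later for ASQ, applied to $(z_n,0)$ with $z_n\in S_X$.

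Your ``only if'' direction genuinely differs from the paper. The paper argues directly: pair arbitrary finite sets of $S_X$ and $S_Y$, apply the property of the sum to get $(u_n,v_n)$ with $\|(u_n,v_n)\|\to 1$, and pass to a subsequence along which $\|u_n\|\to 1$ or $\|v_n\|\to 1$, so that one coordinate space inherits the property. You argue by contraposition, fixing failure witnesses $x_0,\varepsilon$ and $y_0,\varepsilon'$ via Proposition~\ref{prop:lasNas-1dim} and observing that any norm-one test vector $(z,w)$ has a coordinate of norm one, which is then blocked. This is correct for LASQ, and for ASQ once you combine the two witness families into one finite subset of $S_{X\oplus_\infty Y}$ (for instance all pairs $(x_i,y_j)$), a step you only gesture at; done this way your route is in fact slightly tidier than the paper's, since it makes explicit why a single coordinate space serves all finite sets at once. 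However, as stated it does not cover the WASQ ``only if'': failure of WASQ at a point is not captured by Proposition~\ref{prop:lasNas-1dim} (the weak-null requirement is not a single-$\varepsilon$, sphere-valued condition), so there you still need the paper's subsequence argument -- take $x_0\in S_X$ and $y_0\in S_Y$ at which no admissible weakly null sequence exists, suppose $(u_n,v_n)$ witnesses WASQ of the sum at $(x_0,y_0)$, pass to a subsequence with, say, $\|u_n\|\to 1$, and project: $(u_n)$ is weakly null with $\|u_n\|\to 1$ and $\|x_0\pm u_n\|\to 1$ (upper bound from the max norm, lower bound from the triangle inequality), a contradiction. With these two repairs your proposal is complete.
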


\begin{proof}
Let $Z = X \oplus_\infty Y$.
We will only prove the ASQ case -- the others will follow similarly.

Suppose that $Z$ is ASQ. Let $x_1,x_2,\ldots, x_N \in
S_X$ and $y_1,y_2,\ldots, y_N \in S_Y$. Then $(x_i,y_i)$ is in $S_Z$ for
every $i=1,2,\ldots,N$ and by our assumption there is a sequence $z_n =
(u_n,v_n)$ in $B_Z$ such that $||(x_i,y_i) \pm
(u_n,v_n)||\to 1$ for every $i=1,2,\ldots,N$ and $||z_n||
\to 1$. Since $||z_n|| \to 1$ there is a
subsequence such that either $||u_n|| \to 1$ or
$||v_n||\to 1$. Thus one of the spaces $X$ or $Y$ must be
ASQ.

Suppose now that $X$ is ASQ.
Let $z_i=(x_i, y_i)\in S_Z$ for $i=1,2,\ldots,N$ and
let $\varepsilon > 0$.
We may assume that $x_i \neq 0$ for $i=1,2,\ldots,N$.
Using Proposition~\ref{prop:lasNas-1dim}
we can find a $u \in S_X$
$\|\frac{x_i}{\|x_i\|} - u\| \le 1+ \varepsilon$
for every $i=1,2,\ldots,N$. Put $z=(u,0)$. Then
\begin{equation*}
  \|z_i-z\| \le \|x_i - u\|
  =
  \left\| \; \|x_i\| (\frac{x_i}{\|x_i\|} - u)
    + (1-\|x_i\|)u\right \|
  \le 1+\varepsilon
\end{equation*}
for every $i=1,2,\ldots,N$ and $Z$ is ASQ.
\end{proof}

\section{Connection with the IP}
\label{sec:connection-with-ip}
In this section we explore the connection between
ASQ spaces and the intersection property introduced
in \cite{BeHa} (see also \cite[Chapter~II.4]{HWW}).
For a set $I \subset \mathbb{R}^+$ we will
use the notation $B_I = \{x \in X : \|x\| \in I \}$.
So for example $B_X = B_{[0,1]}$, $S_X = B_{\{1\}}$ and
$B_X \setminus S_X = B_{[0,1)}$.

A Banach space $X$ has the \emph{intersection property} (IP)
if for every $\varepsilon > 0$ there exist $x_1,x_2,\ldots,x_N$
in $X$ with $\|x_i\|<1$, $i=1,2,\ldots,N$, such that
if $y \in X$ with
$\|x_i - y\| \le 1$, for every $i=1,2,\ldots,N$, then $\|y\| \le \varepsilon$.
If $X$ fails the IP, then for some $0 < \varepsilon < 1$
we have $\gamma(\varepsilon) = 1$ where
\begin{equation*}
  \gamma(\varepsilon) =
  \sup_{x_1,x_2,\ldots,x_n \in B_{[0,1)}} \inf_{y \in B_{(\varepsilon,1]}}
  \max_i \|x_i -y\|.
\end{equation*}
We will say that $X$ \emph{$\varepsilon$-fails the IP}
if $\gamma(\varepsilon) = 1$.
This is very similar to the index $\alpha(\varepsilon)$,
$\varepsilon \in [0,1]$, defined by Maluta
and Papini in \cite{MR1223652}. Here are two
equivalent definitions of $\alpha(\varepsilon)$
(see Proposition~3.3 in \cite{MR1223652})
\begin{align*}
  \alpha(\varepsilon)
  &=
  \sup_{x_1,x_2,\ldots,x_n \in S_X} \inf_{y \in B_{[\varepsilon,1]}}
  \max_i \|x_i -y\| \\
  &=
  \sup_{x_1,x_2,\ldots,x_n \in B_{[0,1)}} \inf_{y \in B_{[\varepsilon,1]}}
  \max_i \|x_i -y\|.
\end{align*}
It is clear that $\alpha(\varepsilon)$ is monotone,
$\alpha(0) = 1$, and that for $\varepsilon = 1$
we get the thinness index so that $\alpha(1) = t(X) = 1$
if and only if $X$ is ASQ.

\begin{prop}\label{thm:epsfailsIPandas}
  A Banach space $X$ is ASQ if and only if $X$ $\varepsilon$-fails
  the IP for all $0 < \varepsilon < 1$.
\end{prop}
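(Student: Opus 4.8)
The plan is to prove the two implications separately, using the characterizations of ASQ already established (Proposition~\ref{prop:lasNas-1dim} and the monotone index $\alpha(\varepsilon)$, noting $\alpha(1) = t(X) = 1$ iff $X$ is ASQ).

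For the ``only if'' direction, suppose $X$ is ASQ and fix $0 < \varepsilon < 1$; I want $\gamma(\varepsilon) = 1$. The point is that the elements $x_1,\ldots,x_n$ in the definition of $\gamma$ lie in $B_{[0,1)}$, i.e. they have norm strictly less than $1$, whereas the candidate $y$ must land in $B_{(\varepsilon,1]}$. Given such $x_1,\ldots,x_n$ and $\delta > 0$, I would normalize the nonzero ones to $\widehat{x_i} = x_i/\|x_i\| \in S_X$ (discarding any $x_i = 0$, which only makes the sup easier) and apply Proposition~\ref{prop:lasNas-1dim} to get $y \in S_X$ with $\|\widehat{x_i} - y\| \le 1 + \delta$ for all $i$. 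Since $y \in S_X \subset B_{(\varepsilon,1]}$ and $\|x_i\| < 1$, the triangle-inequality estimate
\begin{equation*}
  \|x_i - y\| \le \|x_i - \widehat{x_i}\| + \|\widehat{x_i} - y\|
  = (1 - \|x_i\|) + \|\widehat{x_i} - y\| \le (1-\|x_i\|) + 1 + \delta
\end{equation*}
is not quite good enough as stated, so instead I would argue directly: $\|x_i - y\| \le \max(\|x_i\|, 1)\cdot\|\,\widehat{x_i}' \pm y\,\|$-type bounds are awkward, so the cleaner route is to observe that $\|x_i - y\| \le \|x_i\|\cdot\|\widehat{x_i} - y\| + (1-\|x_i\|)\|y\| \le \|x_i\|(1+\delta) + (1-\|x_i\|) = 1 + \|x_i\|\delta \le 1 + \delta$. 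Hence $\max_i \|x_i - y\| \le 1 + \delta$, and since $\delta$ was arbitrary, $\gamma(\varepsilon) \ge 1$; as $\gamma(\varepsilon) \le 1$ always (the elements have norm $\le 1$, and... actually $\|x_i - y\|$ can exceed $1$, so one should note $\gamma(\varepsilon) \le \alpha(\varepsilon) \le \alpha(1)$, but the relevant bound is that the infimum over a set containing near-optimal $y$ forces $\gamma(\varepsilon) = 1$), we conclude $X$ $\varepsilon$-fails the IP for every $\varepsilon \in (0,1)$.

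For the ``if'' direction, suppose $X$ $\varepsilon$-fails the IP for all $0 < \varepsilon < 1$; I want ASQ, i.e. $t(X) = 1$ by the remark following Example~\ref{ex:property-m_infty}, equivalently $\alpha(1) = 1$. The idea is that $\alpha$ is monotone and $\gamma(\varepsilon) = 1$ for all $\varepsilon < 1$ should push $\alpha(1)$ up to $1$ by a limiting argument. Concretely, given $x_1,\ldots,x_N \in S_X$ and $\delta > 0$, I pick $\varepsilon$ close to $1$ (say $\varepsilon = 1 - \delta$) and use $\gamma(\varepsilon) = 1$ applied to the points $(1-\delta)x_i \in B_{[0,1)}$ to obtain $y \in B_{(\varepsilon,1]}$ with $\max_i \|(1-\delta)x_i - y\| \le 1 + \delta$; then $\|x_i - y\| \le \|(1-\delta)x_i - y\| + \delta \le 1 + 2\delta$ and $\|y\| > \varepsilon = 1 - \delta$, so $y/\|y\| \in S_X$ satisfies $\|x_i - y/\|y\|\| \le \|x_i - y\| + (1 - \|y\|)/\|y\| \le 1 + 2\delta + \delta/(1-\delta)$, which is within $O(\delta)$ of $1$. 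By Proposition~\ref{prop:lasNas-1dim} this gives ASQ.

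The main obstacle is bookkeeping the difference between the ``strictly less than $1$'' constraint on the $x_i$ and the role of the parameter $\varepsilon$: one has to verify that perturbing genuine unit vectors to norm $1-\delta$ (or conversely normalizing an element of norm just above $\varepsilon$) costs only $O(\delta)$ in the relevant norm estimates, and that the quantifier order in $\gamma(\varepsilon)$ (sup over finite tuples, inf over $y$) interacts correctly with taking $\varepsilon \to 1$. Everything else is a routine triangle-inequality chase combined with the already-proven characterization in Proposition~\ref{prop:lasNas-1dim} and the identification $\alpha(1) = t(X)$.
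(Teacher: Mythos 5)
Your ``if'' direction is correct and is essentially the paper's own argument (the paper scales by $(1+\varepsilon)^{-1}$ instead of $(1-\delta)$ and then normalizes the $y$ supplied by $\gamma(1-\varepsilon)=1$ exactly as you do), and the key estimate in your ``only if'' direction, $\|x_i-y\|\le\|x_i\|\,\|\widehat{x_i}-y\|+(1-\|x_i\|)\|y\|\le 1+\delta$, is also fine. The problem is how you close the ``only if'' direction. What your computation actually proves is the \emph{upper} bound: for every tuple in $B_{[0,1)}$ the infimum of $\max_i\|x_i-y\|$ over $y\in B_{(\varepsilon,1]}$ is at most $1$, i.e.\ $\gamma(\varepsilon)\le 1$; your sentence ``since $\delta$ was arbitrary, $\gamma(\varepsilon)\ge 1$'' has the inequality backwards. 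Since $\varepsilon$-failing the IP means $\gamma(\varepsilon)=1$, you still owe the lower bound $\gamma(\varepsilon)\ge 1$, and the parenthetical you offer for it is wrong on both counts: $\gamma(\varepsilon)\le 1$ does \emph{not} hold in general (in $\mathbb{R}$, taking $x_1=t$, $x_2=-t$ with $1-\varepsilon<t<1$ forces $\max_i|x_i-y|\ge t+\varepsilon>1$ for every $y$ with $|y|>\varepsilon$), and the comparison with the Maluta--Papini index goes the other way: the infimum in $\gamma(\varepsilon)$ is taken over the smaller set $B_{(\varepsilon,1]}\subset B_{[\varepsilon,1]}$, so $\gamma(\varepsilon)\ge\alpha(\varepsilon)$, not $\le$.

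The missing lower bound is easy but must be said: either use an antipodal pair ($x_1=tx$, $x_2=-tx$ with $x\in S_X$ and $t\uparrow 1$, so that $\max_i\|x_i-y\|\ge\frac12(\|tx-y\|+\|tx+y\|)\ge t$ for every $y$), or invoke monotonicity of $\alpha$ and $\alpha(0)=1$ to get $\gamma(\varepsilon)\ge\alpha(\varepsilon)\ge\alpha(0)=1$. The latter is exactly how the paper proves the entire forward implication in one line, $t(X)=\alpha(1)\ge\gamma(\varepsilon)\ge\alpha(\varepsilon)\ge\alpha(0)=1$, which makes an explicit construction of $y$ unnecessary there. With that lower bound added and the direction slips corrected, your argument becomes a valid, slightly more hands-on alternative for the forward direction, while your converse coincides with the paper's.
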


\begin{proof}
  Assume $X$ is ASQ and $0 < \varepsilon < 1$.
  Since
  \begin{equation*}
    t(X) = \alpha(1) \ge
    \gamma(\varepsilon)
    \ge \alpha(\varepsilon) \ge \alpha(0) = 1
  \end{equation*}
  we get $\gamma(\varepsilon) = 1$.

  Conversely. Let $x_1,x_2,\ldots,x_N \in S_X$ and $\varepsilon > 0$.
  Let $z_i = (1+\varepsilon)^{-1}x_i$.
  Since $X$ $r$-fails the IP for $r = 1-\varepsilon$,
  there is
  $y \in B_{(r,1]}$ with
  $\max_{i}\|z_i - y\| \le 1 + \varepsilon$.
  Then
  \begin{equation*}
    \|x_i - \frac{y}{\|y\|} \| \le \|x_i - z_i\| + \|z_i - y\|
    + (1-\|y\|)
    \le 1 + 3 \varepsilon.
  \end{equation*}
  From Proposition~\ref{prop:lasNas-1dim}
  we conclude that $X$ is ASQ.
\end{proof}

\begin{rem}
  Harmand and Rao, Theorem~1.7 in \cite{HaRao}, showed that
  every Banach space $X$ containing $c_0$ can be renormed
  to fail the IP.
  In Theorem~\ref{thm:c0-asq} we saw that if $X$ is separable it
  can even be renormed to be ASQ.
  From Theorem~\ref{thm:epsfailsIPandas} we
  see that this is a strengthening of Harmand and Rao's result
  for separable spaces.
\end{rem}

\begin{exmp}
  For $r > 1$ define
  \begin{equation*}
    X_r = \{ f \in C[0,1] : f(0) = r f(1) \}.
  \end{equation*}
  $X_r$ is a non-LASQ space which
  $\varepsilon$-fails the IP for all
  $\varepsilon \le 1 - \frac{1}{r}$, but
  does not $\varepsilon$-fail the IP
  for any $\varepsilon > 1 - \frac{1}{r}$.

  Let $f(x) = (1-x) + \frac{1}{r}x$.
  Let $g \in X_r$ with $\|g\|=\varepsilon$.
  Find $x_0 \in [0,1]$ such that $|g(x_0)|=\varepsilon$ then
  \begin{equation*}
    \frac{1}{r} + \varepsilon
    \le \max_{\pm} |f(x_0) \pm g(x_0)|
    \le \|f \pm g\|.
  \end{equation*}
  With $\varepsilon=1$ this shows that $X_r$ is not LASQ
  since $\|f \pm g\|$ is bounded away from $1$.
  It also shows that $\alpha(\varepsilon) \ge \frac{1}{r} +
  \varepsilon$.

  For $f \in X_r$ with $\|f\| < 1$ we have
  $|f(1)| < \frac{1}{r}$ (if not $|f(0)| \ge 1$).

  Let $f_1,f_2,\ldots,f_n \in X_r$ with $\|f_i\| < 1$.
  Find an interval $(a,1)$ such that $|f_i(x)| < \frac{1}{r}$
  for $x \in (a,1)$.
  Let $g \in X_r$ with $\supp g \subset (a,1)$
  then $\|f_i + g\| < \frac{1}{r} + \|g\|$ and there
  exists $\delta > 0$ such that $\|f_i + g\| + \delta
  \le \frac{1}{r} + \|g\|$.
  If we choose $g$ as above with $\|g\| = 1 - \frac{1}{r} + \delta$
  then $\max_i \|f_i + g\| \le 1$ and $\|g\| > 1 - \frac{1}{r}$.
  Hence $X_r$ $(1-\frac{1}{r})$-fails IP.

  By the above we also have for $\eta > 0$
  \begin{equation*}
    \gamma((1-\frac{1}{r})+\eta)
    \ge
    \alpha((1-\frac{1}{r})+\eta)
    \ge \frac{1}{r} + (1-\frac{1}{r})+\eta
    = 1 + \eta > 1.
  \end{equation*}
\end{exmp}

\begin{exmp}\label{exmp:asqNotMid}
  The space $X = \ell_\infty(C_{\Sigma}(S^m))$ is ASQ.
  Here $S^m$ is the Euclidean sphere in $\mathbb R^{m+1}$ and
  \begin{equation*}
    C_\Sigma(S^m) = \{f \in C(S^m): f(s) = -f(-s) \;
    \mbox{for all}\; s \in S^m\}.
  \end{equation*}
  $X$ is not a $c_0$-sum of ASQ-spaces nor M-embedded
  (see \cite[Example~II.4.6, p.~78]{HWW}), but
  a small adjustment to the proof of in
  \cite[Proposition~II.4.2~(h), p.~76]{HWW} shows that
  $X$ $\varepsilon$-fails the IP for every $0 < \varepsilon < 1$.
\end{exmp}

Next we will show that every ASQ space
contains a separable subspace which is ASQ.
The basic idea for the next proof goes back
to Theorem~4.4 in Lindenstrauss' memoir \cite{MR0179580}.

\begin{prop}\label{prop:as-sep}
  If $X$ is ASQ, then for every separable subspace
  $Y$ of $X$ there exists a separable subspace
  $Z$ with $Y \subset Z \subset X$ and
  $Z$ is ASQ.
\end{prop}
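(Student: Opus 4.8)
The plan is to run the standard closing-off (L\"owenheim--Skolem style) argument, using Theorem~\ref{thm:as-findim} as the source of the witnesses. First I would fix a separable subspace $Y \subset X$ and build an increasing chain of separable subspaces $Y = Z_0 \subset Z_1 \subset Z_2 \subset \cdots$ as follows. Given $Z_n$, pick a countable dense set $D_n \subset Z_n$ and a countable dense set $\{f_{n,k}\}_k$ in $S_{Z_n^*}$ (possible since $Z_n$ is separable, so $Z_n^*$ with the weak* topology has a countable dense subset of its unit sphere; one can instead work with norming functionals coming from a countable dense set of $Z_n$). For each finite subset $F \subset D_n$ and each $m$, apply Theorem~\ref{thm:as-findim} with $E = \linspan F$, with $\varepsilon = 1/m$, and with the finite-dimensional $F$-in-the-dual taken to be $\linspan\{f_{n,1},\ldots,f_{n,m}\}$, to obtain a vector $y = y(F,m,n) \in S_X$ with $(1-1/m)\max(\|x\|,|\lambda|) \le \|x+\lambda y\| \le (1+1/m)\max(\|x\|,|\lambda|)$ for all $x \in E$, $\lambda \in \mathbb R$, and with $|f_{n,j}(y)| < \|f_{n,j}\|/m$ for $j \le m$. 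Let $Z_{n+1}$ be the closed linear span of $Z_n$ together with all these (countably many) vectors $y(F,m,n)$. Then set $Z = \overline{\bigcup_n Z_n}$, which is separable and contains $Y$.

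Next I would check that $Z$ is ASQ. Fix $x_1,\ldots,x_N \in S_Z$ and $\varepsilon > 0$. By density of $\bigcup_n D_n$ in $Z$ and continuity of the norm, there is some $n$ and points $x_i' \in D_n$ with $\|x_i - x_i'\|$ as small as we like and $\|x_i'\|$ close to $1$; normalizing, we may assume the $x_i'$ lie in $S_{Z_n}$ and $\|x_i - x_i'\| \le \varepsilon/3$. Take $m$ with $1/m \le \varepsilon/3$ and set $F = \{x_1',\ldots,x_N'\}$, $y = y(F,m,n) \in S_X \cap Z_{n+1} \subset Z$. By construction $\|x_i' \pm y\| \le (1+1/m) \le 1 + \varepsilon/3$, and then $\|x_i \pm y\| \le \|x_i'\pm y\| + \|x_i - x_i'\| \le 1 + \varepsilon$. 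Since $y \in S_Z$, Proposition~\ref{prop:lasNas-1dim} gives that $Z$ is ASQ. (The dual-smallness clause of Theorem~\ref{thm:as-findim} is not strictly needed here for ASQ, but it costs nothing to carry along and keeps the construction parallel to the one used in Theorem~\ref{thm-asiswas} in case a weakly null version is wanted.)

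The main point to be careful about — the only place where something could go wrong — is the bookkeeping that makes the union absorb all the relevant data: one must ensure that for \emph{every} finite tuple in $S_Z$ and every $\varepsilon$, the required witness has already been placed into some $Z_{n+1}$. This is handled by the two density facts: $\bigcup_n D_n$ is dense in $Z$ (so every finite tuple in $S_Z$ is $\varepsilon/3$-approximated by a finite subset of some single $D_n$, after normalizing), and at stage $n$ we exhausted \emph{all} finite subsets $F$ of $D_n$ and all $m$. Since $D_n$ is countable, it has only countably many finite subsets, so $Z_{n+1}$ is generated by a countable set over $Z_n$ and hence is separable; the countable union $Z$ is therefore separable. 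Everything else is routine triangle-inequality estimation and the standard observation that a countable increasing union of separable spaces has separable closure.
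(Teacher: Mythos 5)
Your proposal is correct and follows essentially the same closing-off argument as the paper's proof: build an increasing chain of separable subspaces by adjoining, at each stage, witnesses for all finite subsets of a countable dense set, and take the closure of the union. The only cosmetic differences are that you invoke Theorem~\ref{thm:as-findim} (carrying the unneeded dual clause) where the paper simply uses the ASQ definition via Proposition~\ref{prop:lasNas-1dim} with dense subsets of the spheres $S_{Y_n}$, and that you let $\varepsilon$ range over all $1/m$ at each stage instead of fixing $\varepsilon_n = 2^{-n}$ at stage $n$.
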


\begin{proof}
  Let $Y \subset X$ be separable.
  With $\varepsilon_n = 2^{-n}$ and
  $Y_0 = Y$ we construct a sequence of separable
  subspaces $(Y_n)$ inductively.

  Let $A_{n}$ be a countable dense set in $S_{Y_n}$.
  For each finite family $G$
  in $A_{n}$ find $y_G$ in $S_X$ such that
  $\|x \pm y_G\| < 1 + \varepsilon_{n}$ for all $x \in G$.
  Let $Y_{n+1}$ be the closure of $\linspan\{Y_n,(y_G)\}$.
  $Y_{n+1}$ is separable since $Y_n$ is.

  Define $Z = \overline{\cup Y_n }$. $Z$ is separable and ASQ.
  Let $z_1,z_2,\ldots,z_N \in S_Z$ and $\varepsilon > 0$.
  Choose $k$ such that $\varepsilon_k < \varepsilon/2$
  and find $x_1,x_2,\ldots,x_N$ in $A_k$ with
  $\|x_i - z_i\| < \varepsilon/2$.
  Then there exists a $y$ in $S_{Y_{k+1}} \subset S_Z$
  with $\|z_i \pm y\| < 1+\varepsilon$
  for $i=1,2,\ldots,N$.
\end{proof}

In \cite{BeHa} Behrends and Harmand asked if dual spaces always have the IP.
None of the examples of ASQ spaces we have seen are dual spaces.
We ask:
\begin{quest}
  Can the dual $X^*$ of a Banach space $X$ be ASQ?
\end{quest}

\begin{rem}
  In Remark~2a page~289 in \cite{HaRao} Harmand and Rao
  noted the following partial answer to
  the question about the IP:
  If $X^*$ is such that for any separable subspace
  $Y$ of $X^*$ there is separable subspace $Z$
  with $Y \subset Z \subset X^*$ and $Z$ complemented
  in $X^*$, then $X^*$ has the IP.
  (The assumption is satisfied if
  e.g. $X^*$ is weakly compactly generated.)
  Their arguments works also for ASQ spaces
  and show that an ASQ space can never be
  a subspace of a weakly compactly generated
  dual space.
\end{rem}

\begin{bibdiv}
\begin{biblist}
\bib{A2}{article}{
   author={Abrahamsen, T. A.},
   title={Strict u-ideals and u-summands in Banach spaces},
   journal={Math. Scand.},
   volume={114},
   date={2014},
   number={2},
   pages={216--225},
   issn={0025-5521},
   review={\MR{3206386}},
}

\bib{ALN01}{article}{
      author={Abrahamsen, T. A.},
      author={Lima, V.},
      author={Nygaard, O.},
       title={Remarks on diameter $2$ properties},
        date={2013},
     journal={J. Convex Anal.},
      volume={20},
      number={1},
       pages={439\ndash 452},
}

\bib{ALN02}{article}{
      author={Abrahamsen, T. A.},
      author={Lima, V.},
      author={Nygaard, O.},
       title={Almost isometric ideals in Banach spaces},
     journal={Glasgow Math. J.},
        date={2014},
      volume={56},
      number={2},
       pages={395\ndash 407},
}

\bib{ABGLP13}{article}{
      author={Acosta, M. D.},
      author={Becerra-Guerrero, J.},
      author={L{\'o}pez-P{\'e}rez, G.},
      title={Stability results of diameter two properties},
      journal={J. Convex. Anal.},
      date={2015},
      volume={22},
      number={1},
      pages={1\ndash 17},
}

\bib{AlKa}{book}{
   author={Albiac, F.},
   author={Kalton, N. J.},
   title={Topics in Banach space theory},
   series={Graduate Texts in Mathematics},
   volume={233},
   publisher={Springer},
   place={New York},
   date={2006},
   pages={xii+373},
   isbn={978-0387-28141-4},
   isbn={0-387-28141-X},
   review={\MR{2192298 (2006h:46005)}},
}

\bib{MR0345087}{book}{
   author={Alexandroff, P.},
   author={Hopf, H.},
   title={Topologie. I},
   note={Berichtigter Reprint;
   Die Grundlehren der mathematischen Wissenschaften, Band 45},
   publisher={Springer-Verlag},
   place={Berlin},
   date={1974},
   pages={xiii+636+2},
   review={\MR{0345087 (49 \#9826)}},
}

\bib{MR2431042}{article}{
   author={Astashkin, S. V.},
   author={Maligranda, L.},
   title={Ces\`aro function spaces fail the fixed point property},
   journal={Proc. Amer. Math. Soc.},
   volume={136},
   date={2008},
   number={12},
   pages={4289--4294},
   issn={0002-9939},
   review={\MR{2431042 (2009g:46045)}},
   doi={10.1090/S0002-9939-08-09599-3},
}

\bib{BGLPRZ}{article}{
   author={Becerra Guerrero, J.},
   author={L{\'o}pez-P{\'e}rez, G.},
   author={Rueda Zoca, A.},
   title={Big slices versus big relatively weakly open subsets in Banach
   spaces},
   journal={J. Math. Anal. Appl.},
   volume={428},
   date={2015},
   number={2},
   pages={855--865},
   issn={0022-247X},
   review={\MR{3334951}},
   doi={10.1016/j.jmaa.2015.03.056},
}

\bib{BGLPRZ-Oct}{article}{
  author={Becerra-Guerrero, J.},
  author={L{\'o}pez-P{\'e}rez, G.},
  author={Rueda Zoca, A.},
  title={Octahedral norms and convex combination of slices in Banach spaces},
  journal={J. Funct. Anal.},
  volume={266},
  date={2014},
  number={4},
  pages={2424--2435},
  issn={0022-1236},
  doi={10.1016/j.jfa.2013.09.004},
}

\bib{BeHa}{article}{
   author={Behrends, E.},
   author={Harmand, P.},
   title={Banach spaces which are proper $M$-ideals},
   journal={Studia Math.},
   volume={81},
   date={1985},
   number={2},
   pages={159--169},
   issn={0039-3223},
   review={\MR{818178 (87f:46031)}},
}

\bib{DGZ}{book}{
   author={Deville, R.},
   author={Godefroy, G.},
   author={Zizler, V.},
   title={Smoothness and renormings in Banach spaces},
   series={Pitman Monographs and Surveys in Pure and Applied Mathematics},
   volume={64},
   publisher={Longman Scientific \& Technical},
   place={Harlow},
   date={1993},
   pages={xii+376},
   isbn={0-582-07250-6},
   review={\MR{1211634 (94d:46012)}},
}

\bib{DiFe}{article}{
   author={D{\'{\i}}az, S.},
   author={Fern{\'a}ndez, A.},
   title={Reflexivity in Banach lattices},
   journal={Arch. Math. (Basel)},
   volume={63},
   date={1994},
   number={6},
   pages={549--552},
   issn={0003-889X},
   review={\MR{1300756 (96a:46039)}},
   doi={10.1007/BF01202072},
}

\bib{Die-Seq}{book}{
   author={Diestel, J.},
   title={Sequences and series in Banach spaces},
   series={Graduate Texts in Mathematics},
   volume={92},
   publisher={Springer-Verlag},
   place={New York},
   date={1984},
   pages={xii+261},
   isbn={0-387-90859-5},
   review={\MR{737004 (85i:46020)}},
   doi={10.1007/978-1-4612-5200-9},
}

\bib{DowJLT}{article}{
   author={Dowling, P. N.},
   author={Johnson, W. B.},
   author={Lennard, C. J.},
   author={Turett, B.},
   title={The optimality of James's distortion theorems},
   journal={Proc. Amer. Math. Soc.},
   volume={125},
   date={1997},
   number={1},
   pages={167--174},
   issn={0002-9939},
   review={\MR{1346969 (97d:46010)}},
   doi={10.1090/S0002-9939-97-03537-5},
}

\bib{MR1606342}{article}{
   author={Dowling, P. N.},
   author={Lennard, C. J.},
   author={Turett, B.},
   title={Asymptotically isometric copies of $c_0$ in Banach spaces},
   journal={J. Math. Anal. Appl.},
   volume={219},
   date={1998},
   number={2},
   pages={377--391},
   issn={0022-247X},
   review={\MR{1606342 (98m:46023)}},
   doi={10.1006/jmaa.1997.5820},
}

\bib{MR0348457}{article}{
   author={Fakhoury, H.},
   title={S\'elections lin\'eaires associ\'ees au th\'eor\`eme de
   Hahn-Banach},
   language={French},
   journal={J. Functional Analysis},
   volume={11},
   date={1972},
   pages={436--452},
   review={\MR{0348457 (50 \#955)}},
}

\bib{MR1026841}{article}{
   author={Gao, J.},
   author={Lau, K.-S.},
   title={On the geometry of spheres in normed linear spaces},
   journal={J. Austral. Math. Soc. Ser. A},
   volume={48},
   date={1990},
   number={1},
   pages={101--112},
   issn={0263-6115},
   review={\MR{1026841 (91e:46025)}},
}

\bib{GKS}{article}{
   author={Godefroy, G.},
   author={Kalton, N. J.},
   author={Saphar, P. D.},
   title={Unconditional ideals in Banach spaces},
   journal={Studia Math.},
   volume={104},
   date={1993},
   number={1},
   pages={13--59},
   issn={0039-3223},
   review={\MR{1208038 (94k:46024)}},
}

\bib{Langthe12}{thesis}{
      author={Langemets, J.},
      title={Diameter 2 properties},
      date={2012},
}

\bib{HL}{article}{
      author={Haller, R.},
      author={Langemets, J.},
       title={Two remarks on diameter 2 properties},
     journal={Proc. Est. Acad. Sci.},
        date={2014},
      volume={63},
      number={1},
      pages={2\ndash 7},
}

\bib{HLP}{article}{
      author={Haller, R.},
      author={Langemets, J.},
      author={P{\~o}ldvere, M.},
       title={On duality of diameter 2 properties},
      journal={J. Convex. Anal.},
      date={2015},
      volume={22},
      number={2},
      pages={465\ndash 483},
}

\bib{hardtke}{article}{
      author={Hardtke, J.-D.},
       title={Absolute sums of Banach spaces and some geometric properties related to rotundity and smoothness.},
        date={2014},
      journal={Banach J. Math. Anal.},
      volume={8},
      number={1},
      pages={295--334},
}

\bib{MR735420}{article}{
   author={Harmand, P.},
   author={Lima, {\AA}.},
   title={Banach spaces which are $M$-ideals in their biduals},
   journal={Trans. Amer. Math. Soc.},
   volume={283},
   date={1984},
   number={1},
   pages={253--264},
   issn={0002-9947},
   review={\MR{735420 (86b:46016)}},
   doi={10.2307/2000001},
}

\bib{HaRao}{article}{
   author={Harmand, P.},
   author={Rao, T. S. S. R. K.},
   title={An intersection property of balls and relations with $M$-ideals},
   journal={Math. Z.},
   volume={197},
   date={1988},
   number={2},
   pages={277--290},
   issn={0025-5874},
   review={\MR{923495 (89h:46026)}},
   doi={10.1007/BF01215196},
}

\bib{HWW}{book}{
   author={Harmand, P.},
   author={Werner, D.},
   author={Werner, W.},
   title={$M$-ideals in Banach spaces and Banach algebras},
   series={Lecture Notes in Mathematics},
   volume={1547},
   publisher={Springer-Verlag},
   place={Berlin},
   date={1993},
   pages={viii+387},
   isbn={3-540-56814-X},
   review={\MR{1238713 (94k:46022)}},
}

\bib{MR1324212}{article}{
   author={Kalton, N. J.},
   author={Werner, D.},
   title={Property $(M)$, $M$-ideals, and almost isometric structure of
   Banach spaces},
   journal={J. Reine Angew. Math.},
   volume={461},
   date={1995},
   pages={137--178},
   issn={0075-4102},
   review={\MR{1324212 (96m:46022)}},
   doi={10.1515/crll.1995.461.137},
}

\bib{MR2878472}{article}{
   author={Kami{\'n}ska, A.},
   author={Kubiak, D.},
   title={On the dual of Ces\`aro function space},
   journal={Nonlinear Anal.},
   volume={75},
   date={2012},
   number={5},
   pages={2760--2773},
   issn={0362-546X},
   review={\MR{2878472 (2012m:46034)}},
   doi={10.1016/j.na.2011.11.019},
}

\bib{Kub13}{article}{
      author={Kubiak, D.},
       title={Some geometric properties of Ces\`aro function space},
        date={2014},
     journal={J. Convex Anal.},
      volume={21},
      number={1},
       pages={189\ndash 201},
}

\bib{LazLin}{article}{
   author={Lazar, A. J.},
   author={Lindenstrauss, J.},
   title={Banach spaces whose duals are $L_{1}$ spaces and their
   representing matrices},
   journal={Acta Math.},
   volume={126},
   date={1971},
   pages={165--193},
   issn={0001-5962},
   review={\MR{0291771 (45 \#862)}},
}

\bib{LMM}{article}{
   author={Lee, H. J.},
   author={Mart{\'{\i}}n, M.},
   author={Mer{\'{\i}}, J.},
   title={Polynomial numerical indices of Banach spaces with absolute norm},
   journal={Linear Algebra Appl.},
   volume={435},
   date={2011},
   number={2},
   pages={400--408},
   issn={0024-3795},
   review={\MR{2782789 (2012d:46040)}},
   doi={10.1016/j.laa.2011.01.037},
}

\bib{MR2052117}{article}{
   author={Lima, V.},
   author={Lima, {\AA}.},
   title={Ideals of operators and the metric approximation property},
   journal={J. Funct. Anal.},
   volume={210},
   date={2004},
   number={1},
   pages={148--170},
   issn={0022-1236},
   review={\MR{2052117 (2004m:46047)}},
   doi={10.1016/j.jfa.2003.10.001},
}

\bib{MR0179580}{article}{
   author={Lindenstrauss, J.},
   title={Extension of compact operators},
   journal={Mem. Amer. Math. Soc. No.},
   volume={48},
   date={1964},
   pages={112},
   issn={0065-9266},
   review={\MR{0179580 (31 \#3828)}},
}

\bib{LiTz1}{book}{
   author={Lindenstrauss, J.},
   author={Tzafriri, L.},
   title={Classical Banach spaces. I},
   note={Sequence spaces;
   Ergebnisse der Mathematik und ihrer Grenzgebiete, Vol. 92},
   publisher={Springer-Verlag},
   place={Berlin},
   date={1977},
   pages={xiii+188},
   isbn={3-540-08072-4},
   review={\MR{0500056 (58 \#17766)}},
}

\bib{LiTz2}{book}{
   author={Lindenstrauss, J.},
   author={Tzafriri, L.},
   title={Classical Banach spaces. II},
   series={Ergebnisse der Mathematik und ihrer Grenzgebiete [Results in
   Mathematics and Related Areas]},
   volume={97},
   note={Function spaces},
   publisher={Springer-Verlag},
   place={Berlin},
   date={1979},
   pages={x+243},
   isbn={3-540-08888-1},
   review={\MR{540367 (81c:46001)}},
}

\bib{MR2170568}{article}{
   author={L{\'o}pez-P{\'e}rez, G.},
   title={The big slice phenomena in $M$-embedded and $L$-embedded spaces},
   journal={Proc. Amer. Math. Soc.},
   volume={134},
   date={2006},
   number={1},
   pages={273--282 (electronic)},
   issn={0002-9939},
   review={\MR{2170568 (2006j:46023)}},
   doi={10.1090/S0002-9939-05-08233-X},
}

\bib{MR1223652}{article}{
   author={Maluta, E.},
   author={Papini, P. L.},
   title={Relative centers and finite nets for the unit ball and its finite
   subsets},
   language={English, with Italian summary},
   journal={Boll. Un. Mat. Ital. B (7)},
   volume={7},
   date={1993},
   number={2},
   pages={451--472},
   review={\MR{1223652 (94e:46032)}},
}

\bib{MR1128093}{book}{
   author={Meyer-Nieberg, P.},
   title={Banach lattices},
   series={Universitext},
   publisher={Springer-Verlag},
   place={Berlin},
   date={1991},
   pages={xvi+395},
   isbn={3-540-54201-9},
   review={\MR{1128093 (93f:46025)}},
   doi={10.1007/978-3-642-76724-1},
}

\bib{MR1814162}{article}{
   author={Pfitzner, H.},
   title={A note on asymptotically isometric copies of $l^1$ and $c_0$},
   journal={Proc. Amer. Math. Soc.},
   volume={129},
   date={2001},
   number={5},
   pages={1367--1373 (electronic)},
   issn={0002-9939},
   review={\MR{1814162 (2001m:46008)}},
   doi={10.1090/S0002-9939-00-05786-5},
}

\bib{MR0467256}{book}{
   author={Sch{\"a}ffer, J. J.},
   title={Geometry of spheres in normed spaces},
   note={Lecture Notes in Pure and Applied Mathematics, No. 20},
   publisher={Marcel Dekker Inc.},
   place={New York},
   date={1976},
   pages={vi+228},
   review={\MR{0467256 (57 \#7120)}},
}

\bib{MR0228997}{article}{
   author={Whitley, R.},
   title={The size of the unit sphere},
   journal={Canad. J. Math.},
   volume={20},
   date={1968},
   pages={450--455},
   issn={0008-414X},
   review={\MR{0228997 (37 \#4576)}},
}
\end{biblist}
\end{bibdiv}

\end{document}